\newcounter{HaveBBM} \setcounter{HaveBBM}{0}
\newcounter{dtlForSubmission} \setcounter{dtlForSubmission}{0}
\newcounter{dtlMarginComments} \setcounter{dtlMarginComments}{1}
\newcounter{dtlSomeDetail} \setcounter{dtlSomeDetail}{2}
\newcounter{dtlFullDetails} \setcounter{dtlFullDetails}{3}
\newcounter{DetailLevel} \setcounter{DetailLevel}{\value{dtlForSubmission}}
\newcommand{\DetailMarginNote}[1]{
    \ifthenelse{\value{DetailLevel}=\value{dtlMarginComments} \or \value{DetailLevel}>\value{dtlMarginComments}}
        {{\small #1}}{}
    }
\newcommand{\DetailSome}[1]{
    \ifthenelse{\value{DetailLevel}=\value{dtlSomeDetail} \or \value{DetailLevel}>\value{dtlSomeDetail}}
        {{\small \textbf{Detailed compile only}: #1}}{}
    }
\newcommand{\DetailFull}[1]{
    \ifthenelse{\value{DetailLevel}=\value{dtlFullDetails} \or \value{DetailLevel}>\value{dtlFullDetails}}
        {{\small \textbf{Detailed compile only}: #1}}{}
    }
\newcommand{\NotDetailSome}[1]{
    \ifthenelse{\value{DetailLevel}=\value{dtlSomeDetail} \or \value{DetailLevel}>\value{dtlSomeDetail}}
        {}{#1}
    }
\newcommand{\NotDetailFull}[1]{
    \ifthenelse{\value{DetailLevel}=\value{dtlFullDetails} \or \value{DetailLevel}>\value{dtlFullDetails}}
        {}{#1}
    }
\newcommand{\DetailSomeElse}[2]{
    \ifthenelse{\value{DetailLevel}=\value{dtlSomeDetail} \or \value{DetailLevel}>\value{dtlSomeDetail}}
        {{\small \textbf{Detailed compile only}: #1}}{#2}
    }
\newcommand{\DetailFullElse}[2]{
    \ifthenelse{\value{DetailLevel}=\value{dtlFullDetails} \or \value{DetailLevel}>\value{dtlFullDetails}}
        {{\small \textbf{Detailed compile only}: #1}}{#2}
    }
\newcommand{\DetailSomeInline}[1]{
    \ifthenelse{\value{DetailLevel}=\value{dtlSomeDetail} \or \value{DetailLevel}>\value{dtlSomeDetail}}
        {{\small #1}}{}
    }
\newcommand{\DetailFullInline}[1]{
    \ifthenelse{\value{DetailLevel}=\value{dtlFullDetails} \or \value{DetailLevel}>\value{dtlFullDetails}}
        {{\small #1}}{}
    }
\newcommand{\DetailSomeElseInline}[2]{
    \ifthenelse{\value{DetailLevel}=\value{dtlSomeDetail} \or \value{DetailLevel}>\value{dtlSomeDetail}}
        {{\small #1}}{#2}
    }
\newcommand{\DetailFullElseInline}[2]{
    \ifthenelse{\value{DetailLevel}=\value{dtlFullD	etails} \or \value{DetailLevel}>\value{dtlFullDetails}}
        {{\small #1}}{#2}
    }
\newcommand{\ExplainDetailLevel}{
    Detail level is
    \ifthenelse{\value{DetailLevel}=\value{dtlForSubmission}}
        {0: for submission}
        {\ifthenelse{\value{DetailLevel}=\value{dtlMarginComments}}
            {1: as for submission but with margin comments}
            {\ifthenelse{\value{DetailLevel}=\value{dtlSomeDetail}}
                {2: some proofs not intended for submission}
               {\ifthenelse{\value{DetailLevel}=\value{dtlFullDetails}}
                   {3: full details}
                   {invalid}
                }
            }
        }
    }
\newcounter{DoAdditionalConstraint} \setcounter{DoAdditionalConstraint}{0}
\newcommand{\AdditionalConstraint}[2]{
    \ifthenelse{\value{DoAdditionalConstraint}=1}
        {{\small #1}}{#2}
    }
\newtheorem{theorem}{Theorem}[section]
\newtheorem{prop}[theorem]{Proposition}
\newtheorem{lemma}[theorem]{Lemma}
\newtheorem{cor}[theorem]{Corollary}
\theoremstyle{definition}
\newtheorem{definition}[theorem]{Definition}
\newtheorem{remark}[theorem]{Remark}
\numberwithin{equation}{section}
\newcommand{\abs}[1]{\left\vert#1\right\vert}
\DeclarePairedDelimiter{\pabs}{\lvert}{\rvert}
\DeclarePairedDelimiter{\Largepabs}{\Big\lvert}{\Big\rvert}
\newcommand{\BoldTau}
    {\mbox{\boldmath $\tau$}}
\newcommand{\skipline}{\vspace{11pt}}
\newcommand{\BB}[1]{\ensuremath{\mathbb{#1}}}
\newcommand{\R}{\ensuremath{\BB{R}}}
\newcommand{\N}{\ensuremath{\BB{N}}}
\newcommand{\iny}{\ensuremath{\infty}}
\newcommand{\grad}{\ensuremath{\nabla}}
\DeclareMathOperator{\dv}{div} %
\DeclareMathOperator{\curl}{curl} %
\DeclareMathOperator{\supp}{supp} %
\newcommand{\prt}{\ensuremath{\partial}}
\newcommand{\brac}[1]{\ensuremath{\left[ #1 \right]}}
\newcommand{\pr}[1]{\ensuremath{\left( #1 \right) }}
\newcommand{\set}[1]{\ensuremath{\left\{ #1 \right\}}}
\newcommand{\norm}[1]{\ensuremath{\left\Vert #1 \right\Vert}}
\newcommand{\smallnorm}[1]{\ensuremath{\Vert #1 \Vert}}
\newcommand{\refA}[1]{Appendix~\ref{A:#1}}
\newcommand{\refS}[1]{Section~\ref{S:#1}}
\newcommand{\refSAnd}[2]{Sections~\ref{S:#1} and \ref{S:#2}}
\newcommand{\refSThrough}[2]{Sections~\ref{S:#1} through \ref{S:#2}}
\newcommand{\refT}[1]{Theorem~\ref{T:#1}}
\newcommand{\refTAnd}[2]{Theorems~\ref{T:#1} and \ref{T:#2}}
\newcommand{\refP}[1]{Proposition~\ref{P:#1}}
\newcommand{\refPAnd}[2]{Propositions~\ref{P:#1} and \ref{P:#2}}
\newcommand{\refPThrough}[2]{Propositions~\ref{P:#1} through \ref{P:#2}}
\newcommand{\refL}[1]{Lemma~\ref{L:#1}}
\newcommand{\refLAnd}[2]{Lemmas~\ref{L:#1} and \ref{L:#2}}
\newcommand{\refD}[1]{Definition~\ref{D:#1}}
\newcommand{\refC}[1]{Corollary~\ref{C:#1}}
\newcommand{\refE}[1]{(\ref{e:#1})}
\newcommand{\refEAnd}[2]{(\ref{e:#1}, \ref{e:#2})}
\newcommand{\refEAndAndAnd}[4]{(\ref{e:#1}, \ref{e:#2}, \ref{e:#3}, \ref{e:#4})}
\newcommand{\refEThrough}[2]{(\ref{e:#1}) through (\ref{e:#2})}
\newcommand{\refR}[1]{Remark~\ref{R:#1}}
\newcommand{\eps}{\ensuremath{\varepsilon}}
\newcommand{\Cal}[1]{\ensuremath{\mathcal{#1}}}
\newcommand{\al}{\ensuremath{\alpha}}
\newcommand{\pdx}[2]{\frac{\prt #1}{\prt #2}}
\newcommand{\pdxmixed}[3]{\frac{\prt^2 #1}{\prt #2 \prt #3}}
\newcommand{\pdxn}[3]{\frac{\prt^{#3} #1}{\prt #2^{#3}}}
\newcommand{\diff}[2]{\frac{ d#1}{d#2}}
\newcommand{\ol}{\overline}
\newcommand{\MOC}{modulus of continuity\xspace}
\newcommand{\CapMOC}{Modulus of continuity\xspace}
\newcommand{\ProofStep}[1]{\bigskip\noindent\textbf{#1}}
\newcommand{\PART}[1]{}
\newcommand{\PART}[1]{
\bigskip
\begin{center}
\textbf{#1}
\phantomsection   % So clicking on table of contents entry will go to the correct page
\addcontentsline{toc}{chapter}{#1}
\end{center}
}
\begin{document}

\raggedbottom
% \reversemarginpar

\numberwithin{equation}{section}

\newcommand{\SideNote}[1]{
	    \ifthenelse{\value{DetailLevel}=\value{dtlMarginComments}
	    	\or \value{DetailLevel}>\value{dtlMarginComments}}
		{\pdfsyncstop\marginpar[\raggedleft\tiny #1]{\raggedright\tiny
		#1}\pdfsyncstart}
		{}
		}

\newcommand{\AuthorNote}[3]{\SideNote{\textcolor{blue}{(#1 #2):} #3}}

\newcommand{\JimNote}[2]{\AuthorNote{Jim}{#1}{#2}}

\newcommand{\HMNote}[2]{\AuthorNote{H \& M}{#1}{#2}}

\newcommand{\DavidNote}[2]{\AuthorNote{David}{#1}{#2}}

\newcommand{\MarginNote}[1]{\SideNote{#1}}

\newcommand{\ToDo}[1]{
	    \ifthenelse{\value{DetailLevel}=\value{dtlMarginComments}
	    	\or \value{DetailLevel}>\value{dtlMarginComments}}
		{\textbf{[#1]}}
		{}
		}

%--- Old definition
%\newcommand{\MarginNote}[1]{
%    \marginpar{
%        \begin{flushleft}
%            \footnotesize #1
%        \end{flushleft}
%        }
%    }

\newcommand{\Detail}[1]{
    \MarginNote{Detail}
    \skipline
    \hspace{+0.25in}\fbox{\parbox{4.25in}{\small #1}}
    \skipline
    }

%--- Holder, with an umlaut over the "o"
\newcommand{\Holder}
    {H\"{o}lder }

%--- Holder's, with an umlaut over the "o"
\newcommand{\Holders}
    {H\"{o}lder's }

\newcommand{\wh}{\widehat}

\newcommand{\n}{\bm{n}}

%--- This def may require a package I don't have loaded
% \newcommand{\EqDef}
%    {\coloneqq}

\newcommand{\Ignore}[1] {}

%
% Command very specific to this paper
%
\newcommand{\ds}{\displaystyle}
\newcommand{\olg}{\ol{\gamma}}
\newcommand{\GG}{G_\Omega}
\newcommand{\GGBC}{G_{\ol{B}^C}}
\newcommand{\KK}{K_\Omega}
\newcommand{\KKBC}{K_{\ol{B}^C}}
\newcommand{\JJ}{J_\Omega}
\newcommand{\JJBC}{J_{\ol{B}^C}}
\newcommand{\KKol}{\ol{K}_\Omega}
\newcommand{\KKolBC}{\ol{K}_{\ol{B}^C}}
\newcommand{\stardot}{\mathop{* \cdot}}

%
%--------------------- Commands specific to this paper -----------------
%

%
%-------------------- Redefined from preamble ---------------------------
%
% \renewcommand{\refE}[1]{(\ref{e:#1})}

%--- Information for alphabetically first author
\author[Ambrose]{David M. Ambrose}
% --- Address of record for the research reported here
\address{Department of Mathematics, Drexel University, 3141 Chestnut Street, Philadelphia, PA 19104}
%--- Current address
 %\curraddr{Department of Mathematics, University of California, Riverside, 900 University Ave., Riverside, CA 92521}
\email{ambrose@math.drexel.edu}

%--- Information for alphabetically second author
\author[Kelliher]{James P. Kelliher}
% --- Address of record for the research reported here
\address{Department of Mathematics, University of California, Riverside, 900 University Ave.,
Riverside, CA 92521}
%--- Current address
 %\curraddr{Department of Mathematics, University of California, Riverside, 900 University Ave., Riverside, CA 92521}
\email{kelliher@math.ucr.edu}

%--- Information for alphabetically third author
\author[Lopes Filho]{Milton C. Lopes Filho}
% --- Address of record for the research reported here
\address{Instituto de Matem\'atica, Universidade Federal do Rio de Janeiro, Cidade Universit\'aria -- Ilha do Fund\~ao, Caixa Postal 68530, 21941-909 Rio de Janeiro,  RJ -- Brazil}

%--- Current address
\curraddr{}
\email{mlopes@im.ufrj.br}

%--- Information for alphabetically fourth author
\author[Nussenzveig Lopes]{Helena J. Nussenzveig Lopes}
% --- Address of record for the research reported here
\address{Instituto de Matem\'atica, Universidade Federal do Rio de Janeiro, Cidade Universit\'aria -- Ilha do Fund\~ao, Caixa Postal 68530, 21941-909 Rio de Janeiro,  RJ -- Brazil}

%--- Current address
\curraddr{} \email{hlopes@im.ufrj.br}

\subjclass[2010]{Primary 35Q31, 76B03} % ; Secondary }
% \date{} %  and, in revised form, .}

\keywords{Fluid mechanics, Euler equations}

\title
    [Serfati solutions]
    {Serfati solutions to the 2D Euler equations on exterior domains}

\begin{abstract}
We prove existence and uniqueness of a weak solution to the incompressible 2D Euler
equations in the exterior of a bounded smooth obstacle when the initial data is a bounded divergence-free
velocity field having bounded scalar curl. This work completes and extends the ideas outlined by P. Serfati for the same problem in the whole-plane case.   With non-decaying vorticity, the Biot-Savart
integral does not converge, and thus velocity cannot be reconstructed from vorticity in a
straightforward way.  The key to circumventing this difficulty is the use of the Serfati identity,
which is based on the Biot-Savart integral, but holds in more general settings.
\end{abstract}

\date{(compiled on \today)}

\maketitle

\Ignore{ % Ignore
\begin{small}
    \begin{flushright}
        % Completed \textit{\textbf{13 January 2006}}; c
        Compiled on \textit{\textbf{\today}}
    \end{flushright}
\end{small}
} % End Ignore

%--- Give date and time and detail level UNLESS for submission
\DetailMarginNote{
    \begin{small}
        \begin{flushright}
            Compiled on \textit{\textbf{\today}}

            \ExplainDetailLevel

        \end{flushright}
    \end{small}
    }

\newcommand{\BoxComment}[1]{
    \skipline
    \hspace{+0.25in}\fbox{\parbox{4.25in}{\small #1}}
    \skipline
    }

\vspace{-0.30in}

%-----------table of contents----------------------
{\small \tableofcontents}

\newpage

%
% Section
%
\section{Introduction}\label{S:Intro}

The incompressible Euler equations describe the velocity field, $u$, and pressure, $p$, of a constant-density, inviscid fluid. The equations (without forcing) can be written in strong form as
\begin{align} \label{e:EulerEqsVelForm}
	\left\{
		\begin{array}{rl}
			\prt_t u + u \cdot \grad u + \grad p = 0
				& \text{in } \Omega, \\
			\dv u = 0
				& \text{in } \Omega, \\
			u \cdot \n = 0
				& \text{on } \prt \Omega, \\
			u(0) = u_0
				& \text{ in } \Omega.
		\end{array}
	\right.
\end{align}
Here, $\Omega$ is a domain with boundary (empty, if $\Omega = \R^2$) and $\n$ is the outward unit normal to the boundary. The initial velocity, $u_0$, and the solution, $(u, p)$, are assumed to lie in appropriate function spaces. If $\Omega$ is unbounded, some condition at infinity must be imposed.

In two dimensions, the classical well-posedness result for finite-energy \textit{weak} solutions with bounded initial vorticity (the scalar curl of the velocity) is that established by Yudovich in \cite{Y1963} (and extended by him in \cite{Y1995} to allow slightly unbounded vorticities). Yudovich's results are for a bounded domain, but his ideas were adapted to the full plane case, see \cite{Majda1986}. Vishik, in \cite{VishikBesov}, working in the full plane, established a slightly larger uniqueness class of unbounded vorticities.
% Chemin, in \cite{C1998}, establishes well-posedness of classical solutions in the full plane.
Each of these full-plane results, however, requires the initial vorticity to have some decay at infinity. This assumption is not natural from the physical point of view, as full plane flow is an approximate model for flow far from boundaries, where no decay of distant vorticity
should be expected.

In 1995, Ph. Serfati stated and outlined proofs of existence and uniqueness of solutions for the incompressible 2D Euler equations in the full plane with each of the initial velocity and initial vorticity bounded \cite{Serfati1995A}. We call such velocity fields, \textit{Serfati velocity fields}. Once no decay of vorticity is assumed, uniform boundedness of vorticity no longer implies boundedness of velocity, so it makes sense to add this condition as an hypothesis.
Our purpose in the present paper is to present a complete proof of Serfati's original result, in the full plane, and extend this
argument to exterior domains.

Until Serfati's 1995 paper, all existence results in an unbounded domain, including the full plane, made key use of the Biot-Savart law to recover the velocity from the vorticity. This law can be expressed in the form,
\begin{align}\label{e:BS}
	\KK[\omega]
		:= \int_\Omega \KK(\cdot, y) \omega(y) \, dy.
\end{align}
Here, $\KK$ is the Biot-Savart kernel,
\begin{align}\label{e:KKDef}
	\KK(x, y) = \grad_x^\perp \GG(x, y),
\end{align}
where $\GG$ is the Green's function for the Dirichlet Laplacian on $\Omega$. Above, $\grad_x^{\perp} = (-\prt_{x_2},\prt_{x_1})$.

When $\Omega$ is the domain exterior to a single, connected, bounded, domain then, given a scalar field, $\omega$, $u = \KK[\omega]$ is the unique divergence-free vector field on $\Omega$, decaying at infinity, with $u \cdot \n = 0$ on $\prt \Omega$, whose scalar curl (vorticity) is $\omega$, and whose circulation about the boundary is $\displaystyle{-\int_{\Omega} \omega}$. (See \refS{SerfatiIdExt} for more details.)

Convergence of the Biot-Savart integral requires, however, membership of $\omega$ in an appropriate space; for instance, $\omega\in L^1 \cap L^\iny$ would be sufficient.
For $\omega$ only in $L^\iny$, the Biot-Savart integral fails to converge. This is the heart of the difficulty in working with Serfati solutions.

Serfati's key insight, which we adopt, is to use, in place of the Biot-Savart law, the identity,
\begin{align}\label{e:SerfatiId}
	\begin{split}
		&u^j(t, x) - (u^0)^j(x) =
				\int_\Omega a(x - y)
					\KK^j(x, y) (\omega(t, y)
						-  \omega^0(y)) \, dy \\ % \\
			&\qquad- \int_0^t \int_\Omega \grad_y \grad_y^\perp
					\brac{(1 - a(x - y)) \KK^j(x, y)}
			\cdot (u \otimes u)(s, y) \, dy \, ds,
	\end{split}
\end{align}
$j = 1$, $2$, for all $(t, x)$ in $[0, T] \times \Omega$. Here, $a$ is any radially symmetric, smooth, compactly supported cutoff function with $a = 1$ in a neighborhood of the origin. We call \refE{SerfatiId} the \textit{Serfati identity}. (Actually, Serfati never derives or even states this identity, but rather states inequalities that follow from it.)

To motivate our approach and understand how the Serfati identity enters the analysis, let us describe formally how one obtains a bound on the $L^\iny$-norm of the velocity at time $t > 0$ given that both the initial velocity and the initial vorticity are in $L^\iny$.
Assume that $u$ is a solution to the Euler equations (without forcing). Then the vorticity is transported by the velocity with $\prt_t \omega = - u \cdot \grad \omega.$  (This always holds formally for solutions of the 2D Euler equations.)

If $\Omega$ were a bounded and simply connected domain, then applying the Biot-Savart law gives
\begin{align}\label{e:uj1}
	\begin{split}
	u^j(t, x)
		&
		= \int_\Omega \KK^j(x, y) \omega(t, y) \, dy.
	\end{split}
\end{align}
Hence, $\norm{u(t)}_{L^\iny} \le C_1 \smallnorm{\omega^0}_{L^\iny}$, where $C_1 = \smallnorm{\KK(x, \cdot)}_{L^1(\Omega)}$, since vorticity is transported by the velocity.

When working with an unbounded domain, exterior to a single obstacle, however, $\KK(x, \cdot)$ is in $L^1_{loc}(\Omega)$ but not in $L^1(\Omega)$, so $C_1$ is infinite, and this simple estimate fails. One can obtain a more refined estimate, however, by using $\prt_t \omega = - u \cdot \grad \omega$ and integrating by parts to obtain
\begin{align}\label{e:uj2}
	u^j(t, x)
		&= (u^0)^j(x)
			- \int_0^t \int_\Omega \pr{u(s, y) \cdot \grad_y} \grad_y^\perp \KK^j(x, y)
				\cdot u(s, y) \, dy \, ds
\end{align}
for $j = 1, 2$. Unfortunately, this only leads to
\begin{align*}
	\norm{u(t)}_{L^\iny}
		\le \norm{u^0}_{L^\iny}
			+ C_2 \int_0^t \norm{u(s)}_{L^\iny}^2 \, ds,
\end{align*}
where $C_2 = \smallnorm{\grad \grad \KK(x, \cdot)}_{L^1(\Omega)}$. More important,
$C_2$ is also infinite because although $\grad \grad \KK(x, \cdot)$ decays fast enough to be integrable near infinity, it has a singularity like $\abs{y - x}^{-3}$ at $y = x$.

Serfati realized that one can reach a balance between the expressions in \refEAnd{uj1}{uj2} by splitting the Biot-Savart law into near-field and far-field (with respect to $x$) parts. He did this specifically for the full plane, where $\KK(x, y) = K(x - y)$, with
\begin{align}\label{e:KBSFullPlane}
	K(x)
		:= \frac{x^\perp}{2 \pi \abs{x}^2}.
\end{align}
This leads to \refE{SerfatiIdConv} (see \refS{SerfatiIdR2}). From \refE{SerfatiIdConv}, with $a_\eps(\cdot) = a(\cdot/\eps)$ in place of $a$, he obtains, in effect,
\begin{align}\label{e:C1C2}
	\begin{split}
	C_1
		&= C_1(\eps)
		 = \smallnorm{a_\eps(x - \cdot) K(x -\cdot)}_{L^1(\Omega)}
		 \le C \eps, \\
	C_2
		&= C_2(\eps)
		= \smallnorm{\grad \grad [(1 - a_\eps(x - \cdot) )K(x - \cdot)]}_{L^1(\Omega)}
		 \le C \eps^{-1},
	\end{split}
\end{align}
and thereby the inequality,
\begin{align}\label{e:uSerfatiBound}
	\norm{u(t)}_{L^\iny}
		&\le \norm{u^0}_{L^\iny}
			+ C \eps
			+ \frac{C}{\eps} \int_0^t \norm{u(s)}_{L_\iny}^2 \, ds.
\end{align}
Letting
$
	\eps
		= \eps(t)
		= \pr{\int_0^t \norm{u(s)}_{L_\iny}^2 \, ds}^{1/2},
$
one obtains a bound on the $L^\iny$-norm of $u(t)$, as we do in \refS{ExistenceR2}.

Of course, this gives only a formal, a priori bound on the $L^\iny$-norm of the velocity, but this bound is the key to the proof of existence. Serfati's identity is also key to the proof of uniqueness, where, however, only one fixed value of $\eps > 0$ is needed.

If one tries to directly extend Serfati's approach to an exterior domain, a number of technical difficulties arise, but the key problem is that the estimates in \refE{C1C2} no longer hold. Let us consider explicitly the case where $\Omega = \ol{B}^C$ is the exterior of the closed unit disk. With $K$ as in \refE{KBSFullPlane}, the Biot-Savart kernel for all of $\R^2$, it is classical that
\begin{align}\label{e:KK}
	\KKBC(x, y)
		= K(x - y) - K(x - y^*),
\end{align}
where $y^* = y/\abs{y}^2$.

The best that can be obtained is $C_1(\eps) \le C(\eps + \eps^2)$ and $C_2(\eps) \le C(1 + \eps^{-1})$. This is sufficient for uniqueness but not for existence. (With a slight variation on the estimate of $C_2$, local-in-time existence can be established, however.) The reason for the failure of the estimate in \refE{C1C2} boils down to the failure of $\KKBC(x, y)$ to decay as $\abs{y} \to \iny$. To remedy this, we add the function, $K(x)$, to \refE{KK}, giving the \textit{modified Biot-Savart kernel},
\begin{align*} % \label{e:JJDefDisk}
	\JJBC(x, y) = \KKBC(x, y) + K(x),
\end{align*}
which decays as $\abs{y} \to \iny$.

More generally, for the exterior of a bounded, connected and simply connected obstacle, we define
\begin{align}\label{e:JJDef}
	\JJ(x, y) = \KK(x, y) + \KKol(x),
\end{align}
where $\KKol$ is the unique divergence-free vector field tangential to $\prt \Omega$ having circulation one and decaying at infinity. An explicit form for $\KKol$ is given in \refE{olK} (from which we can see that $\KKolBC = K$ on $\overline{B}^C)$.

If one uses $\JJ$ in place of $\KK$ in the Biot-Savart law \refE{BS}, one obtains a different solution operator for the system

\begin{equation} \label{ellipsys}
\left\{\begin{array}{ll}
\dv u = 0, & \mbox{ in }\Omega,\\
\curl u = \omega, & \mbox{ in }\Omega,\\
u \cdot \n = 0,& \mbox{ on } \prt\Omega.
\end{array}\right.
\end{equation}
In fact the modified Biot-Savart law, with $\JJ$ in place of $\KK$, gives the unique solution to the system \eqref{ellipsys} which has {\em vanishing circulation} around $\prt \Omega$. This modified Biot-Savart law was first introduced by C. C. Lin in \cite{CCLin41}, as the perpendicular gradient of the {\em hydrodynamic Green's function}. For more details on the hydrodynamic Biot-Savart law, see \cite{LopesANDLopes2010}.

Unlike $\KK(x, \cdot)$, the kernel $\JJ(x, \cdot)$ no longer vanishes on the boundary; nonetheless, it turns out that using $\JJ$ in place of $\KK$ only disrupts the Serfati identity slightly, leading to \refE{SerfatiIdJ} below.

The equivalent of \refE{C1C2} for $\JJ$ in place of $\KK$ holds (or holds almost; see \refP{JKBounds} below), and we use these bounds in the same manner as Serfati to obtain \refE{uSerfatiBound}. Now, however, we must also account for a boundary integral in \refE{SerfatiIdJ}, and this we cannot do simply by bounding the size of the Biot-Savart kernel. It is dealt with easily enough, however, by a simple estimate (see \refP{JBoundaryTermBound}).

% Bounds on the Biot-Savart kernel for the exterior of a simply connected bounded domain are then easily obtained from this for the complement of the unit disk using a holomorphic map given by the Riemann mapping theorem.

\bigskip

This paper is organized as follows: We state our results in \refS{Results}. We give the proof of existence separately for the full plane in \refS{ExistenceR2} and for an exterior domain in \refS{ExistenceExt}. In each of these sections, we start by deriving the Serfati identity for the given type of domain then give the existence proofs. We rely on estimates on the Biot-Savart kernel developed later in \refS{BSEstimates}. We also rely on these estimates in \refS{UniquenessAndCont}, where we prove uniqueness in \refS{Uniqueness}, extending the argument to give a type of continuous dependence on initial data in \refS{ContinuousDependence}.

The Biot-Savart kernel estimates of \refS{BSEstimates} are developed first for the full plane in \refS{BSKernelFullPlane}, then for the exterior to the unit disk in \refS{BSKernelExteriorDisk}, and finally for the exterior to a single obstacle in \refS{BSKernelExteriorSingleObstacle}, each section building on the previous.

Examples of Serfati velocities are given in \refS{Examples}.

There are other proofs of existence and uniqueness in the full plane for Serfati initial velocity, most notably those of \cite{Taniuchi2004, TaniuchiEtAl2010}. The proofs, however, employ Littlewood-Paley decompositions for existence and paradifferential calculus for uniqueness, and so are not easily extended to other domains. We discuss these issues at some depth in \refS{ConcludingRemarks}.

In \refA{InitData}, we show how to prepare a sequence of initial velocities that are smooth with compactly supported vorticity and that converge in an appropriate sense to a given bounded initial velocity having bounded vorticity. (This approximate sequence is employed in \refSAnd{ProofExistenceR2}{ProofExistenceExt} to obtain existence of solutions.)

\section{Statement of results}\label{S:Results}

\noindent  The purpose of this section is to give precise statements of the main results in this work: existence, uniqueness, and a mild form of continuous dependence of solutions on initial data. We will treat two very different fluid domains---the full plane and domains exterior to a single obstacle. To be more precise, we will denote the fluid domain by $\Omega$, be it all of $\R^2$ or the exterior of a single connected and simply connected bounded domain with a $C^{\infty}$ boundary. In the latter case let $\bm{n}$ denote the unit exterior normal to $\Omega$ at the finite boundary $\Gamma$. (For notational convenience we set $\Gamma = \emptyset$ when considering full plane flow.) We let $\BoldTau$ denote the unit tangent vector, oriented so that
\begin{align*}
    \BoldTau
        = - \n^\perp
        := - (-n_2, n_1)
        = (n_2, - n_1).
\end{align*}

We begin with basic definitions concerning the type of velocity field we are interested in and the notion of weak solution of the Euler equations we will consider.

If $u$ is a vector field on $\Omega$, we write
\begin{align*}
	\omega(u)
		:= \curl u
		= \prt_1 u^2 - \prt_2 u^1
\end{align*}
for the scalar curl (vorticity) of $u$. Following a common convention, we sometimes write $\omega$ for $\omega(u)$ when $u$ is understood; that is, we use $\omega$ both as a function on vector fields and as the value of that function.

It is classical that, by taking the scalar curl of the two dimensional incompressible Euler equations \refE{EulerEqsVelForm}, we obtain the vorticity equation, or the vorticity formulation of the Euler equations:
\begin{align} \label{e:EulerEqsVortForm}
	\left\{
		\begin{array}{rl}
			\prt_t \omega + u \cdot \grad \omega = 0
				& \text{in } \Omega, \\
			\dv u = 0
				& \text{in } \Omega, \\
			\curl u = 0
				& \text{in } \Omega, \\
			u \cdot \n = 0
				& \text{on } \prt \Omega, \\
			\omega(0) = \omega_0 = \curl u_0
				& \text{ in } \Omega.
		\end{array}
	\right.
\end{align}

\begin{definition}\label{D:SerfatiVelocity}
	We say that a divergence-free vector field $u \in L^\iny(\Omega)$
	with $u \cdot \n = 0$ on $\Gamma$ and
	$\omega(u) \in L^\iny(\Omega)$ is a \textit{Serfati velocity}.
	We denote by $S = S(\Omega)$ the Banach space of all Serfati velocity fields
	with the norm,
	\begin{align*}
		\norm{u}_S
			= \norm{u}_{L^\iny} + \norm{\omega(u)}_{L^\iny}.
	\end{align*}
\end{definition}
\begin{remark}\label{R:Trace}
	Since $u \in L^2_{loc}$ and $u$ is divergence-free, the trace of
	its normal component, $u \cdot \n$, is well-defined and belongs
	to $H^{-1/2}(\Gamma)$ (see, for instance,
	Theorem I.1.2 of \cite{T2001}).
\end{remark}

\begin{definition}\label{D:ESol}
	Fix $T > 0$. Assume that
	$u \in L^\iny(0, T; S) \cap C([0, T] \times \Omega)$ and
	let $\omega = \omega(u)$.
	We say that $u$ is a \textit{Serfati solution} to the Euler equations
	without forcing and with initial velocity $u^0 = u|_{t = 0}$ in $S$
	% Jim 24 April 2013: I chose the style (which I perhaps regret) of using words to
	% describe membership in sentences. It's either change all or change none, and it
	% would be a pain to change all.
	if the following conditions hold:
	\begin{enumerate}
		\item
			The vorticity equation
			$\prt_t \omega + u \cdot \grad \omega = 0$ (see \refE{EulerEqsVortForm}$_1$) holds in the sense
			of distributions.
			
		\smallskip
			
		\item
			For any radially symmetric, smooth, compactly supported
			cutoff function $a$
			with $a = 1$ in a neighborhood of the origin the Serfati
			identity in \refE{SerfatiId} holds.
			
		\smallskip
		
		\item
			The vorticity $\omega$ is transported by the flow map for $u$
			(see \refR{FlowMap}).
	\end{enumerate}
	When $\Omega$ is the exterior of a single obstacle, we require, in addition,
	that the circulation of the velocity about the boundary be conserved over time.
\end{definition}

\begin{remark}\label{R:FlowMap}
	The flow map, mentioned in the definition above, is well-defined if $u \in
	 L^\iny(0, T; S) \cap C([0, T] \times \Omega)$. Indeed, such $u$ has a log-Lipschitz \MOC, $\mu$, in space, uniformly over $(0,T)$; see \refL{Morrey}.
	It follows that there exists a unique measure-preserving classical flow map,
	$X \colon (0,T) \times \Omega \to \Omega$, for $u$.
	Property (3) of \refD{ESol} then means that
	$\omega(t, X(t, x)) = \omega^0(x)$ for all $(t, x)$ in $(0,T) \times \Omega$.
\end{remark}

\Ignore{ % We already say this in the introduction and in the definition
\begin{remark}
    Identity \refE{SerfatiId} is implicit in Serfati's work, \cite{Serfati1995A};
    we will refer to it hereafter as the \textit{Serfati identity}.
\end{remark}
} % End Ignore

Our main results are \refTAnd{Existence}{ContDep}, in which we establish the existence, uniqueness, and a limited form of continuous dependence on initial data for Serfati solutions. We begin with the statement of existence and uniqueness.

\begin{theorem}\label{T:Existence}
	Let $T > 0$. Assume that $u^0 \in S$. Then there exists  a unique,
	Serfati solution $u$ to the Euler equations as in \refD{ESol}.
	Moreover, there exists a unique flow map $X=X(t,\cdot)$ for $u$;
	that is, $X \colon [0,T]\times\Omega \to \Omega$ and
	\begin{alignat*}{2}
		\prt_t X(t, x) &= u(t, X(t,x)),
            &\quad& t \in (0,T), x\in \Omega \\
		X(0,x) &= x,  && x \in \Omega.
	\end{alignat*}
	The flow map is measure-preserving and
	$X(t, \cdot)\in C^{\beta(t)}$,
	where $\beta(t) = e^{-\al \abs{t}}$
	and $\al = C \norm{u}_{L^\iny(0, T; S)}$.
\end{theorem}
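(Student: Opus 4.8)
The plan is to combine an a priori bound, a compactness/limiting argument for existence, and a Gr\"onwall-type argument for uniqueness, using the Serfati identity \refE{SerfatiId} as the substitute for the Biot--Savart law throughout. First I would establish the a priori bound on $\norm{u(t)}_{L^\iny}$: starting from the Serfati identity (in the form \refE{SerfatiIdConv} for the full plane, or \refE{SerfatiIdJ} for the exterior case, using the modified kernel $\JJ$), insert the rescaled cutoff $a_\eps = a(\cdot/\eps)$, and apply the kernel estimates of \refS{BSEstimates} --- namely the bounds of the type \refE{C1C2} proved in \refP{JKBounds}, together with the boundary-term estimate \refP{JBoundaryTermBound} in the exterior case --- to get \refE{uSerfatiBound}. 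Optimizing over $\eps$ via $\eps(t) = (\int_0^t \norm{u(s)}_{L^\iny}^2\,ds)^{1/2}$ and a Gr\"onwall/Osgood argument yields $\norm{u(t)}_{L^\iny} \le \Phi(t, \norm{u^0}_S)$ for an explicit $\Phi$; since $\omega$ is transported, $\norm{\omega(t)}_{L^\iny} = \norm{\omega^0}_{L^\iny}$, so in fact $\norm{u}_{L^\iny(0,T;S)}$ is controlled by the initial data.

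For existence I would take the approximating sequence $u^0_n$ from \refA{InitData} --- smooth, divergence-free, with compactly supported vorticity, converging appropriately to $u^0$ --- and let $u_n$ be the corresponding classical (Yudovich-type) solutions, which satisfy the Serfati identity exactly. The uniform bound above gives $\norm{u_n}_{L^\iny(0,T;S)} \le C$, and the log-Lipschitz spatial \MOC from \refL{Morrey} together with the vorticity equation gives equicontinuity in space and time; by Arzel\`a--Ascoli (on compact subsets of $[0,T]\times\Omega$) and a diagonal argument, a subsequence converges locally uniformly to some $u \in L^\iny(0,T;S) \cap C([0,T]\times\Omega)$. One then passes to the limit in each clause of \refD{ESol}: the vorticity equation in the sense of distributions is stable under this convergence; the Serfati identity passes to the limit because its right-hand side is continuous with respect to local-uniform convergence of $u$ and $\omega$ (the kernel factors are fixed and integrable, and on the far-field piece the double-gradient kernel times the fixed cutoff is integrable by the $\eps$-independent part of the estimates); transport of $\omega$ by the flow map follows from stability of the flow maps $X_n \to X$ (again via the log-Lipschitz bound and uniqueness of ODE flows for such fields); and conservation of circulation passes to the limit in the exterior case.

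For uniqueness, suppose $u_1, u_2$ are two Serfati solutions with the same initial data. Subtracting the Serfati identities (now with a single fixed $\eps > 0$, as noted after \refE{uSerfatiBound}) expresses $u_1(t) - u_2(t)$ in terms of $\int_0^t$ of a kernel against $(\omega_1 - \omega_2)$ and against $(u_1\otimes u_1 - u_2\otimes u_2)$; the standard way to close this is to work at the level of the flow maps, estimating $\norm{X_1(t,\cdot) - X_2(t,\cdot)}_{L^\iny}$ (or an $L^2$ analogue) and using $\omega_i(t,X_i(t,x)) = \omega^0(x)$ to convert the vorticity difference into a flow-map difference, together with the log-Lipschitz modulus to get an Osgood inequality, which forces $X_1 = X_2$ and hence $u_1 = u_2$. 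Finally, the regularity of the flow map: from $\prt_t X(t,x) = u(t,X(t,x))$ and the log-Lipschitz bound $\abs{u(t,x)-u(t,y)} \le C\norm{u}_{L^\iny(0,T;S)}\,\abs{x-y}(1 - \log\abs{x-y})$ for $\abs{x-y}$ small, a Gr\"onwall argument applied to $\abs{X(t,x)-X(t,y)}$ gives the H\"older exponent $\beta(t) = e^{-\al\abs{t}}$ with $\al = C\norm{u}_{L^\iny(0,T;S)}$ in the classical way, and measure-preservation follows from $\dv u = 0$.

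The main obstacle, I expect, is not any single one of these steps in isolation but the kernel estimates feeding \refE{uSerfatiBound} in the exterior case: as the excerpt itself flags, the naive estimates give only $C_1(\eps) \le C(\eps + \eps^2)$ and $C_2(\eps) \le C(1 + \eps^{-1})$, which is enough for uniqueness but \emph{not} for the $\eps$-optimized a priori bound needed for global existence, and it is precisely to fix this that one must replace $\KK$ by the modified kernel $\JJ$ and carry out the more delicate analysis of \refS{BSEstimates}; getting \refE{uSerfatiBound} to hold with the modified kernel --- and handling the extra boundary integral in \refE{SerfatiIdJ} that $\JJ$ introduces --- is the technical heart of the argument.
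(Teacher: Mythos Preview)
Your overall strategy is correct and aligns closely with the paper's, including the a priori bound via the rescaled Serfati identity, the use of \refA{InitData} for approximants, the uniqueness argument via flow maps and an Osgood inequality, and the identification of the exterior-domain kernel estimates as the technical crux. However, there is a real gap in the existence step: you assert that ``the log-Lipschitz spatial \MOC from \refL{Morrey} together with the vorticity equation gives equicontinuity in space and time,'' and then invoke Arzel\`a--Ascoli directly on the $u_n$. Spatial equicontinuity is fine, but equicontinuity in \emph{time}, uniformly in $n$, is not available by this route: the vorticity equation controls $\omega_n$, not $u_n$, and converting this into a uniform time-modulus for $u_n$ would require either potential-theory estimates that need vorticity decay (as in \cite{MB2002}) or pressure estimates (as Serfati does in \cite{Serfati1995A}) --- neither of which is at hand here.

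The paper handles this point differently, and in fact flags it explicitly (see the remark in Step~6 of \refS{ProofExistenceR2}). Rather than applying Arzel\`a--Ascoli to the velocities, the paper first establishes convergence of the \emph{flow maps} $X_n$ (which do satisfy uniform time-Lipschitz and space-H\"older bounds, so Arzel\`a--Ascoli applies to them), then uses this to get $\omega_n \to \omega$ in $L^\iny(0,T; L^p_{loc})$, and finally uses the Serfati identity a \emph{second} time --- now with a fixed cutoff and the already-established $L^p_{loc}$-convergence of vorticity --- to show directly that $(u_n)$ is Cauchy in $C([0,T]\times L)$ for each compact $L$. This bypasses any need for uniform time-continuity of $u_n$ before passing to the limit. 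Your sketch can be repaired by inserting this intermediate layer (flow maps $\to$ vorticities $\to$ velocities Cauchy via the Serfati identity), but as written the Arzel\`a--Ascoli step on the velocities does not go through.
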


\begin{remark}\label{R:Pressure}
It is shown in \cite{K2013} that the solutions constructed in \refT{Existence} are also distributional solutions of the  velocity formulation of the Euler equations \refE{EulerEqsVelForm}. Moreover, there exists an associated pressure whose asymptotic behavior is $O(\log \abs{x})$ for large $\abs{x}$ and whose gradient is bounded (this is done in \cite{K2013} both for the full plane and an exterior domain).
\end{remark}

The following is a statement that Serfati solutions depend continuously, in $L^{\infty}$-norm, on the (Serfati) initial data. We will need additional notation to state the result.

For any $p \in [1, \iny]$, $L^p_{uloc}(\Omega)$ is the uniformly local $L^p$ space; that is, the space of all measurable functions whose norm,
\begin{align*}
	\norm{f}_{L^p_{uloc}(\Omega)}
		:= \sup_{U \subset \Omega, \abs{U} \le C_0} \norm{f}_{L^p(U)},
\end{align*}
is finite, where $C_0$ is an arbitrary fixed positive constant and $\abs{U}$ is the Lebesgue measure of $U$. For any $p \in [2, \iny]$, let
\begin{align}\label{e:Sp}
	S^p
		= \set{u \in (L^\iny(\Omega))^2 \colon \dv u = 0, \,
			\omega(u) \in L^p_{uloc}(\Omega), \,
			u \cdot \n = 0 \text{ on } \Gamma}.
\end{align}
Then $S^p$ is a Banach space under the norm $\norm{u}_{S^p} = \norm{u}_{L^\iny} + \norm{\omega(u)}_{L^p_{uloc}}$. (We require $p \ge 2$ so that $u \cdot \n$ is well-defined, as in \refR{Trace}.)

\begin{theorem}\label{T:ContDep}
Let $u_1, u_2$ be Serfati solutions to the Euler equations for a fixed $T > 0$
% with vorticities, $\omega_1, \omega_2$,
and let $p \in (2, \iny]$.
Let $u_1^0$, $u_2^0$ be the initial velocities with $u^0_1 - u^0_2 \in S^p$.
% Let $\mu$ be a common bound on the \MOC of $u_1$ and $u_2$ satisfying \refE{mu}.
For all sufficiently small $s_0 = \smallnorm{u^0_1 - u^0_2}_{S^p}$ there exist $C > 0$ such that
\begin{align}\label{e:u1u2BoundExplicit}
	\begin{split}
	&\norm{u_1(t) - u_2(t)}_{L^\iny}
		\le C e^{Ct} s_0
			- C(1 + t) e^{Ct} (C s_0 t)^{e^{-Ct(1 + t)}} \log(C s_0 t)
	\end{split}
\end{align}
for all $t$ in $[0, T]$, where $C$ depends on
% the initial data,
$T$ and $p$.
\end{theorem}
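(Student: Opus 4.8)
The plan is to subtract the two Serfati identities, estimate the difference $\bar u := u_1 - u_2$, and reduce the bound to a coupled pair of Osgood-type inequalities --- one for $\phi(t) := \norm{\bar u(t)}_{L^\iny}$ and one for the flow-map displacement $\delta(t) := \norm{X_1(t,\cdot) - X_2(t,\cdot)}_{L^\iny}$ --- which are then integrated to yield \refE{u1u2BoundExplicit}. Fix once and for all an admissible cutoff $a$ with support radius $\eps$, and write $\bar\omega := \omega_1 - \omega_2$, $\bar u^0 := u_1^0 - u_2^0$, $\bar\omega^0 := \omega_1^0 - \omega_2^0$; note $\norm{\bar u^0}_{L^\iny} \le s_0$ and set $M := \norm{u_1}_{L^\iny(0,T;S)} + \norm{u_2}_{L^\iny(0,T;S)} < \iny$. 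Writing \refE{SerfatiId} --- in the exterior case \refE{SerfatiIdJ}, which carries an extra boundary integral --- for $u_1$ and for $u_2$ and subtracting, $\bar u^j(t,x) - (\bar u^0)^j(x)$ is the sum of a \emph{vorticity term} $\int_\Omega a(x-y)\KK^j(x,y)\,(\bar\omega(t,y) - \bar\omega^0(y))\,dy$, a \emph{nonlinear term} built from $u_1\otimes u_1 - u_2\otimes u_2 = \bar u\otimes u_1 + u_2\otimes\bar u$, and, in the exterior case, a boundary term. The nonlinear term is routine: since $\abs{(u_1\otimes u_1 - u_2\otimes u_2)(s,y)} \le M\abs{\bar u(s,y)}$ pointwise and $\eps$ is fixed, \refP{JKBounds} gives $\norm{\grad_y\grad_y^\perp[(1-a(x-\cdot))\KK^j(x,\cdot)]}_{L^1(\Omega)} \le C(\eps) < \iny$, so this term is at most $CM\int_0^t \phi(s)\,ds$; the boundary term is controlled by \refP{JBoundaryTermBound} (using conservation of circulation in the exterior case) and is likewise $\le C\int_0^t\phi(s)\,ds$.

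The vorticity term is the crux, because $\bar\omega(t)$ is not small in any norm that pairs with $a\KK^j(x,\cdot)$. The device is transport: by Property (3) of \refD{ESol} and \refR{FlowMap}, $\omega_i(t,\cdot) = \omega_i^0 \circ X_i(t,\cdot)^{-1}$ with $X_i(t,\cdot)$ measure-preserving. Setting $g(y) := a(x-y)\KK^j(x,y)$ --- compactly supported, with an $\abs{x-y}^{-1}$-type singularity at $y=x$ and smooth elsewhere (see \refS{BSEstimates}) --- changing variables and writing $\omega_1^0 = \omega_2^0 + \bar\omega^0$ gives
\begin{align*}
	\int_\Omega g(y)\,(\bar\omega(t,y) - \bar\omega^0(y))\,dy
		= \int_\Omega \big(g(X_1(t,z)) - g(z)\big)\,\bar\omega^0(z)\,dz
			+ \int_\Omega \big(g(X_1(t,z)) - g(X_2(t,z))\big)\,\omega_2^0(z)\,dz.
\end{align*}
In the first integral, $\norm{g\circ X_1(t,\cdot)}_{L^{p'}(\Omega)} = \norm{g}_{L^{p'}(\Omega)} < \iny$ for $p' = p/(p-1) < 2$ --- exactly where the hypothesis $p > 2$ is used --- and all integrands are supported on sets of measure $\le C\eps^2$, so \Holders inequality bounds it by $C\norm{\bar\omega^0}_{L^p_{uloc}} \le C s_0$. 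In the second integral one transfers the displacement onto $g$: after the measure-preserving substitution $w = X_1(t,z)$ it equals $\int_\Omega \big(g(w) - g(w - v(w))\big)\,\omega_2^0\big(X_1(t,\cdot)^{-1}(w)\big)\,dw$ with $\abs{v(w)} \le \delta(t)$ for every $w$, hence is at most $\norm{\omega_2^0}_{L^\iny}\,\norm{\sup_{\abs{v}\le\delta(t)}\abs{g(\cdot) - g(\cdot - v)}}_{L^1(\Omega)}$; since $g$ is compactly supported with an explicit $\abs{x-y}^{-1}$ singularity, a direct computation bounds this $L^1$-translation modulus by $C\nu(\delta(t))$, where $\nu(r) := r(1 + \log^+(1/r))$. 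Collecting terms,
\begin{align*}
	\phi(t) \le C\big(s_0 + \nu(\delta(t))\big) + C\int_0^t \phi(s)\,ds,
\end{align*}
with $C$ depending on $T$, $p$, $M$, and the (now fixed) $\eps$.

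To close, we estimate $\delta$ as well. Differentiating $\prt_t X_i(t,x) = u_i(t, X_i(t,x))$, $X_i(0,x) = x$, and using the uniform-in-time log-Lipschitz \MOC $\mu$ of $u_1$ from \refL{Morrey}, we get $\delta'(t) \le \mu(\delta(t)) + \phi(t)$, $\delta(0) = 0$, with $\mu(r) \le C\nu(r)$. Gr\"onwall's inequality applied to the $\phi$-inequality (whose inhomogeneous term is nondecreasing in $t$) gives $\phi(t) \le C e^{Ct}\big(s_0 + \nu(\delta(t))\big)$; inserting this into the $\delta$-inequality produces a scalar Osgood differential inequality $\delta'(t) \le \Lambda(t)\big(s_0 + \nu(\delta(t))\big)$, $\delta(0) = 0$, with $\Lambda$ explicit and nondecreasing. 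Comparison with the associated ODE --- whose solution near the origin behaves like $(C s_0 t)^{e^{-\int_0^t \Lambda(\sigma)\,d\sigma}}$, the small-exponent power being the signature of the Osgood modulus $r\log(1/r)$ --- bounds $\delta(t)$, and feeding this back into $\phi(t) \le Ce^{Ct}\big(s_0 + \nu(\delta(t))\big)$ and simplifying gives \refE{u1u2BoundExplicit}. Smallness of $s_0$ enters precisely to keep the relevant radii in the range where $\nu(r), \mu(r) \sim r\log(1/r)$ and to ensure $\log(Cs_0 t) < 0$, so that the right-hand side of \refE{u1u2BoundExplicit} is positive.

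I expect the vorticity term to be the main obstacle: deriving the change-of-variables identity cleanly (including the $\Gamma$-boundary contributions in the exterior case), proving the quantitative $L^1$-translation estimate for $g$ and the comparison $\mu \le C\nu$, and carrying out the bookkeeping in the coupled Gr\"onwall--Osgood step; the remaining pieces reduce to the Biot-Savart kernel bounds of \refS{BSEstimates}.
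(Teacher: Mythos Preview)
Your approach is essentially the paper's: subtract the Serfati identities, split the vorticity term via transport and change of variables into a piece controlled by $s_0$ and a piece controlled by the log-Lipschitz translation modulus of the localized kernel (this is exactly \refP{olgKBound}, with your $L^1$-translation estimate being \refP{hlogBound}), bound the nonlinear and boundary terms directly via \refP{JKBounds} and \refP{JBoundaryTermBound}, and close with an Osgood argument.

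Two small points. First, the paper evaluates the difference of Serfati identities at the Lagrangian point $X_1(t,x)$ rather than at a fixed $x$, and tracks the single nondecreasing quantity $M(t) = \int_0^t \norm{u_1(s,X_1(s,\cdot)) - u_2(s,X_2(s,\cdot))}_{L^\iny}\,ds$, which dominates $h(t)$ (your $\delta$); this collapses your coupled $(\phi,\delta)$ system into a single scalar Osgood inequality $M' \le C s_0 + \nu(M)$ and then recovers $\norm{u_1(t)-u_2(t)}_{L^\iny}$ at the end. Second, your Gr\"onwall step asserts that the inhomogeneous term $C(s_0 + \nu(\delta(t)))$ is nondecreasing in $t$, but $\delta(t) = \norm{X_1(t,\cdot)-X_2(t,\cdot)}_{L^\iny}$ is not obviously monotone. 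This is easily repaired by replacing $\delta$ with $\sup_{0\le s\le t}\delta(s)$ throughout, or --- as the paper does --- by building monotonicity in from the start via $M(t)$.
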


\begin{remark}
	The last term in \refE{u1u2BoundExplicit} goes to zero as $s_0 \to 0^+$
	since
	$
		\lim_{r \to 0^+} r^\al \log r = 0
	$
	for any $\al > 0$.
	% , and $e^{-Ct(1 + t)} \to 1$ as $t \to 0^+$.
\end{remark}

\refT{ContDep} is not a statement that Serfati solutions depend continuously on initial data since we establish only that
the velocities, at future time, are close in $L^{\infty}$ if they are close at time zero in $S^p$, but we provide no information on the $S$ norm of their difference in terms of their initial $S$ norm.

In fact, we should not expect continuous dependence in the $L^\iny$ norm of the vorticity, and hence not in $S$. For instance, a small initial perturbation of a vortex patch will displace the contour and result in a large discrepancy between perturbed and unperturbed solutions, relative to the $L^{\infty}$-norm of vorticity, for any positive time.

%
% PART
%

\PART{Part I: Existence in the full plane}

\section{Existence in the full plane}\label{S:ExistenceR2}

\noindent Existence of weak solutions for the incompressible 2D Euler equations has been established under many different kinds of regularity assumptions. The proofs follow a standard strategy consisting in first generating a sequence of approximations,  then establishing enough  a priori estimates to show that the sequence is compact in an appropriate function space and, finally, passing to the limit in the weak form of the Euler equations. To obtain compactness, a priori estimates are needed for both velocity and vorticity. Whenever the function space is based on a rearrangement invariant space, the vorticity estimates are immediate, as future vorticity is simply a rearrangement of its initial values. One then establishes velocity estimates by integrating vorticity estimates using the Biot-Savart law, which relates vorticity to velocity through a Biot-Savart kernel (see \cite{LopesLopesTadmor2000} for details). For an unbounded fluid domain, this kernel has very mild decay at infinity. Hence, in order to ensure that the Biot-Savart law is well-defined it is necessary to impose decay of vorticity at infinity. It turns out that this is the only reason to impose decay of vorticity at infinity.

In his pioneering work, \cite{Serfati1995A}, Ph. Serfati got around the lack of decay of the Biot-Savart kernel, and managed to establish existence and uniqueness for 2D Euler, in the full plane, for bounded velocity fields whose vorticity is merely bounded and does not obey any decay assumptions at infinity. To do so, it was necessary to introduce an alternative to the Biot-Savart law, which we refer to as the Serfati identity. One of the main uses of the Serfati identity is to play the role of the Biot-Savart law in obtaining a priori estimates for the velocity given estimates on vorticity. Serfati's proofs were terse and incomplete, but brilliant.

The existence of weak solutions to the 2D Euler equations in the full plane with Serfati initial data is included in the work of Taniuchi \cite{Taniuchi2004}. His proof, however, makes extensive use of the group structure of $\R^2$, through the use of Littlewood-Paley theory, and thus does not extend to domains with boundary.

In this section we present a complete proof of existence of Serfati solutions in the case of the full plane, inspired by Serfati's work. We will see that, in contrast to Taniuchi's proof, Serfati's ideas extend naturally to other unbounded fluid domains.

%
% Section
%
\subsection{The Serfati identity in the full plane}\label{S:SerfatiIdR2}

As mentioned, the Serfati identity is an alternative to the Biot-Savart law
% which allows the velocity to be recovered from vorticity,
% Jim: this is a misleading statement, since one has to also
% take a limit, so it is not just the Serfati identity
in that it provides a priori estimates on the velocity.

Let us begin by recalling the Biot-Savart law in the full plane. Let $\omega \in C_c^\iny(\R^2)$, where the subscript $c$ in $C^{\iny}_c$ means functions in $C^{\iny}$ with compact support. The Biot-Savart law relating a divergence-free velocity field $u$ to its vorticity $\omega$ is given by
\begin{align*}
    u(t, x)  = \int_\Omega K(x - y) \omega(t, y) \, dy,
\end{align*}
where
\begin{align*}
    K(x) = \frac{1}{2 \pi} \frac{x^\perp}{\abs{x}^2}
\end{align*}
is the Biot-Savart kernel and where, for $x = (x_1, x_2)$,
\begin{align*}
    x^\perp
        := (- x_2, x_1).
\end{align*}

In view of the mild decay of the kernel $K$ at infinity, together with the singularity at the origin, it is easy to check that
\begin{align}
K \in L^p_{loc}(\R^2), \quad \text{for all} 1\leq p < 2,\qquad K \in L^q(\R^2\setminus B_R(0)) \text{ for all } q > 2.
\end{align}
Hence, to ensure that the integral in the Biot-Savart law is finite it is necessary to have
\[\omega\in L^{p'}\cap L^{q'},\quad \mbox{ for some } p'>2 \mbox{ and } 1\leq q' < 2.\]
This is precisely the kind of hypothesis which is avoided by using the Serfati identity.

It is useful to introduce the notation $\stardot$ defined below:
\begin{align*}
    \begin{array}{ll}
    v \stardot w
 		= v^i * w^i
 	        &\mbox{if $v$ and $w$ are  vector fields}, \\
    A \stardot B
		= A^{ij} * B^{ij}
		    &\mbox{if $A$, $B$ are matrix-valued functions on $\R^2$},
    \end{array}
\end{align*}
where $*$ denotes convolution.
We have adopted the convention that repeated indices are implicitly summed.

Let $f$ be a scalar field and $v$ a vector field. Then, using the notation introduced above, we have
\begin{align}\label{e:CurlIBP}
        f * \curl v
            &= f * (\prt_1 v^2 - \prt_2 v^1)
            = \prt_1 f * v^2 - \prt_2 f * v^1
            = \grad^\perp f \stardot v
%        \grad f \stardot v
%            &= \prt_1 f * v^1 + \prt_2 f * v^2
%            = f * \dv v
\end{align}
and
\begin{align}\label{e:gradperpdivIBP}
    \begin{split}
        \grad^\perp f \stardot \dv (v \otimes v)
            &= -\prt_2 f * \prt_j(v^1 v^j) + \prt_j \prt_1 f
                * (v^2 v^j) \\
            &= - \prt_j \prt_2 f * (v^1 v^j) + \prt_j \prt_1 f
                *(v^2 v^j) \\
            &= \grad \grad^\perp f \stardot (v \otimes v).
    \end{split}
\end{align}

\noindent \begin{prop}\label{P:SerfatiKeyR2}

Let $u$ be a $C^\iny$ classical solution to the Euler equations with initial vorticity, $\omega^0$, compactly supported. Then, for any radially symmetric function $a \in C_c^\iny(\R^2)$ such that $a = 1$ in a neighborhood of the origin, the following identity holds true:
\begin{align}\label{e:SerfatiIdConv}
    \begin{split}
        u^j(t&) - (u^0)^j
            = (a K^j) *(\omega(t) - \omega^0) \\
        &\qquad- \int_0^t \pr{\grad \grad^\perp \brac{(1 - a) K^j}}
        \stardot (u \otimes u)(s) \, ds,
            \quad j = 1, 2.
	\end{split}
\end{align}

\end{prop}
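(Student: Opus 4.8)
The plan is to derive the identity \refE{SerfatiIdConv} starting from the Biot-Savart law and the vorticity equation, exploiting the convolution structure of the full plane. First I would recall that for a $C^\infty$ classical solution with compactly supported initial vorticity, the vorticity $\omega(t) = \curl u(t)$ remains compactly supported and smooth for all $t$ (it is transported by the flow map of $u$), so all convolutions below are of a nice compactly supported function against a locally integrable kernel and hence converge; moreover $u(t) = K * \omega(t)$ holds classically. The starting point is the trivial identity
\begin{align*}
	u^j(t) - (u^0)^j
		= K^j * (\omega(t) - \omega^0),
\end{align*}
which follows from the Biot-Savart law at times $t$ and $0$.

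Next I would split the kernel using the cutoff: $K^j = a K^j + (1-a) K^j$, so that
\begin{align*}
	u^j(t) - (u^0)^j
		= (a K^j) * (\omega(t) - \omega^0)
			+ ((1-a) K^j) * (\omega(t) - \omega^0).
\end{align*}
The first term is already exactly the first term on the right-hand side of \refE{SerfatiIdConv}, so the work is to show that the second term equals the time-integral term. For this I would write $\omega(t) - \omega^0 = \int_0^t \prt_s \omega(s)\, ds$ and use the vorticity equation in the form $\prt_s \omega = - u \cdot \grad \omega = - \dv(u\,\omega)$ (using $\dv u = 0$); then since $\omega = \curl u$, one has $u\,\omega^i$... more precisely one uses $\dv(u\omega) = \dv(u\, \curl u)$ and the standard 2D identity rewriting this in terms of $\dv(u\otimes u)$: namely $u \curl u = \dv(u \otimes u) - \tfrac12 \grad|u|^2$, and the gradient term is killed after convolution with $\grad^\perp$ of something (or, alternatively and more cleanly, one applies \refE{CurlIBP} then \refE{gradperpdivIBP} directly). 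Concretely, setting $f = (1-a)K^j$ — wait, $K^j$ is a component of a vector, not a scalar; here the right bookkeeping is that $K^j = (\grad^\perp \GG)^j$ where $\GG$ is the full-plane Newtonian potential, so $(1-a)K^j$ plays the role of "$\grad^\perp f$" with $f = (1-a) \times$(fundamental solution) up to lower-order commutator terms from differentiating the cutoff. I would therefore first reduce, via \refE{CurlIBP}, the convolution $((1-a)K^j) * \omega = \grad^\perp[\,\cdot\,]\stardot u$-type expression against the velocity, and then feed $\prt_s \omega$ through \refE{gradperpdivIBP} to land on $\grad\grad^\perp[(1-a)K^j]\stardot(u\otimes u)$, integrating in $s$ from $0$ to $t$.

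The main obstacle I anticipate is the bookkeeping around the cutoff $a$: the clean identities \refE{CurlIBP} and \refE{gradperpdivIBP} are stated for "$\grad^\perp f$" and "$\grad\grad^\perp f$" with a genuine scalar potential $f$, whereas here we have a cutoff \emph{times} a component of $K$, and $(1-a)K^j$ is not literally the perpendicular gradient of a scalar unless one absorbs the derivatives of $a$ carefully. The resolution is that all the steps — differentiation under the convolution, moving derivatives from the kernel onto $v$, and commuting $\prt_j$ past $(1-a)$ — are just integration by parts valid because $\omega(s)$ is compactly supported and smooth and $(1-a)K^j$ is smooth away from the (excluded) origin with the right decay; one should check that $\grad\grad^\perp[(1-a)K^j]$ is indeed in $L^1_{loc}$ with enough decay (it is: away from the origin $K$ is smooth, near the origin $1-a$ vanishes, and at infinity $(1-a)K \sim |x|^{-1}$ so two derivatives give $|x|^{-3}$, integrable against the compactly supported $u\otimes u$... actually $u\otimes u$ need not be compactly supported, so one needs the $|x|^{-3}$ decay, which holds). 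A secondary point is justifying $\prt_s$ commuting with the spatial convolution and with $\int_0^t$, which follows from smoothness and compact support of $\omega$ uniformly on $[0,t]$. I would also remark that $\grad\grad^\perp[(1-a)K^j]$ is the key object whose $L^1$ norm is estimated later in \refS{BSEstimates}, so only its integrability — not any cancellation — is needed here. Finally I would note the identity extends from the convolution form to the general $\KK$ form \refE{SerfatiId} in later sections; for the present proposition the argument above suffices.
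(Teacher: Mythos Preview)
Your approach is essentially the same as the paper's: start from Biot-Savart, split the kernel with the cutoff, and rewrite the far-field piece via the vorticity equation and the integration-by-parts identities \refE{CurlIBP} and \refE{gradperpdivIBP}. The paper differentiates $u^j = K^j * \omega$ in time first and then integrates back, while you subtract the Biot-Savart laws at times $t$ and $0$ directly; these are trivially equivalent.

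However, the ``main obstacle'' you flag is a misreading of \refE{CurlIBP} and \refE{gradperpdivIBP}. In those identities $f$ is an arbitrary scalar field and $\grad^\perp f$, $\grad\grad^\perp f$ are \emph{outputs} of the identity, not hypotheses on the input. You simply take the scalar $f = (1-a)K^j$ (the $j$-th component of $(1-a)K$ is a scalar function) and apply the identities verbatim:
\begin{align*}
	((1-a)K^j) * \prt_s \omega
		&= -((1-a)K^j) * \curl(u \cdot \grad u) \\
		&= -\grad^\perp\bigl((1-a)K^j\bigr) \stardot (u \cdot \grad u) \\
		&= -\grad^\perp\bigl((1-a)K^j\bigr) \stardot \dv(u \otimes u) \\
		&= -\grad\grad^\perp\bigl((1-a)K^j\bigr) \stardot (u \otimes u),
\end{align*}
using $u\cdot\grad\omega = \curl(u\cdot\grad u)$ and $u\cdot\grad u = \dv(u\otimes u)$ along the way. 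There is no need for $(1-a)K^j$ itself to be a perpendicular gradient, and no commutator terms from the cutoff arise. Your decay considerations (the kernel $\grad\grad^\perp[(1-a)K^j]$ is smooth near the origin and $O(\abs{x}^{-3})$ at infinity, while $u$ decays since $\omega$ is compactly supported) are correct and are what justify these formal manipulations.
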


\begin{remark}
It is easy to check that \refE{SerfatiIdConv} corresponds exactly to the Serfati identity \refE{SerfatiId} when $\Omega = \R^2$.
\end{remark}

\begin{proof}
For classical solutions, the vorticity is transported by the flow, so since it is initially compactly supported it remains so for all time. This fact and the smoothness of the solution justify the calculations that follow.

For $j = 1, 2$, we have,
\begin{align*}
    \prt_t u^j
        &= \prt_t (K^j *\omega)
        = \prt_t (aK^j *\omega) + \prt_t ((1 - a)K^j *\omega).
\end{align*}
We integrate in time to get
\begin{align}\label{e:SerfatiIdStartR2}
    \begin{split}
    u(t, x)
        &= u^0(x) + \int_0^t \prt_s (K^j *\omega)(s, x) \, ds \\
        &= u^0(x)
            + \int_0^t \prt_s \brac{(a K^j) * \omega(s, x)} \, ds \\
            &\qquad\qquad\qquad
            + \int_0^t ((1 - a) K^j) * \prt_s \omega(s, x) \, ds \\
        &= u^0(x) + (a K^j)*(\omega(t) - \omega^0)(x) \\
            &\qquad\qquad\qquad
            + \int_0^t ((1 - a) K^j) * \prt_s \omega(s, x) \, ds.
    \end{split}
\end{align}
We now treat the final integrand. We have:
\begin{align*}
    ((1 - a) &K^j) * \prt_s \omega \\
        &=
            - ((1 - a) K^j) *
                (u \cdot \grad \omega) \\
        &= - ((1 - a) K^j) *\curl (u \cdot \grad u) \\
        &= - \grad^\perp ((1 - a) K^j)
            \stardot (u \cdot \grad u) \\
        &= - \grad^\perp((1 - a) K^j)
            \stardot (\dv u \otimes u) \\
        &= - \grad \grad^\perp ((1 - a) K^j)
            \stardot (u \otimes u),
\end{align*}
where we used the vorticity equation $\prt_s \omega + u \cdot \grad \omega = 0$, the identity $u \cdot \grad \omega = \curl (u \cdot \grad u)$, and \refEAnd{CurlIBP}{gradperpdivIBP}. Substituting this back into \refE{SerfatiIdStartR2} yields \refE{SerfatiIdConv}.

\end{proof}

The main advantage of the Serfati identity is that the localized kernel $aK$ is integrable, while $\nabla\nabla^{\perp}((1-a)K)$ is smooth and has behavior asymptotic to $|x|^{-3}$ for large $x$, which is also integrable in $\R^2$. Hence, if both $u \in L^{\iny}$ and $\omega\in L^{\iny}$, uniformly in time, then both terms on the right-hand-side of \refE{SerfatiIdConv} converge. In other words, the Serfati identity is well-defined for velocity fields $u \in L^{\iny}(S)$.

\begin{remark}
The Serfati identity \eqref{e:SerfatiId} or its full plane version \refE{SerfatiIdConv} is intended as a dynamic renormalization to substitute for the classical Biot-Savart law, in situations where the Biot-Savart law is not well-defined. One could instead
consider the following renormalization:
\begin{align} \label{e:ABS}
		u(t, x)
			= u^0(x) +
				\lim_{R \to \iny} (a_R \KK)*
				    (\omega(t) - \omega^0)(x),
\end{align}
where $a_R=a_R(z)\equiv a(z/R)$ and $a$ is as in property (2) of Definition \ref{D:ESol}. This identity is also well-defined for Serfati velocities, and it is easy to see that all Serfati solutions satisfy \eqref{e:ABS}. In fact, as is shown in \cite{K2013}, for the full plane, \refE{ABS} is interchangeable with \refE{SerfatiId} in property (2) of \refD{ESol}.
\end{remark}

%
% Section
%
\subsection{Proof of existence in the full plane}\label{S:ProofExistenceR2}

\noindent
The proof of existence proceeds in several steps. One begins by producing a sequence of smooth approximations and by establishing a priori estimates, in $S$, for this sequence. It is easy to
show that the approximate vorticities are uniformly bounded in $L^{\infty}$, but the uniform estimate
on the $L^{\infty}$-norm of the approximate velocities relies completely on the Serfati identity \refE{SerfatiIdConv}. This is the heart of the proof of existence. The subsequent steps are, by now, nearly standard, as set forth in Chapter 8 of \cite{MB2002}: we show that the velocities are uniformly log-Lipschitz, from which compactness of the flow maps follows; we choose a fixed converging subsequence of the approximate flow maps. Next, we define a {\it candidate} limit vorticity and show that, along this subsequence, the approximate vorticities converge to the candidate limit vorticity, uniformly in time, in $L^p_{loc}(\R^2)$ for all $1\leq p < \infty$. We then use the Serfati identity \refE{SerfatiIdConv} again to show that, along the same subsequence, the corresponding approximate velocities converge uniformly to a continuous velocity field. Next we show that the limit velocity and its associated  vorticity are solutions, in the sense of distributions, of the incompressible 2D Euler equations and, finally, that they satisfy the Serfati identity \refE{SerfatiIdConv}. We conclude the proof of existence by establishing additional regularity of the limit velocity---we show it is log-Lipschitz in space, uniformly in time. 

Our existence proof contains elements of the  proof, given in Theorem 8.1 of \cite{MB2002}, of the existence of solutions to the Euler equations with classical Yudovich initial vorticity ($L^1 \cap L^\iny$), and of Serfati's  proof of existence of (Serfati) solutions found in \cite{Serfati1995A}. We depart from both, however, in that we use the Serfati identity to show that our approximating sequence of velocities is Cauchy rather than establishing some time continuity of the velocities. (Serfati uses properties of the pressure to show the continuity, in time, of the approximate velocities.)

\begin{proof}[\textbf{Proof of existence in \refT{Existence} for the full plane}]
Let $u^0 \in S$ and assume that $u^0$ does not vanish identically; otherwise, there is nothing to prove.

\ProofStep{Step 1. Construct approximating sequence.}
We construct the sequence of approximations by generating a smooth sequence of vector fields which approximate the initial data and, afterwards, by exactly solving the Euler equations with the smooth data.

Let $(u_n^0)_{n=1}^\iny$ and $(\omega_n^0)_{n=1}^\iny$ be the approximating sequences to the initial velocity, $u^0$, and initial vorticity, $\omega^0$, given by \refP{InitData}. By hypothesis, $u^0$ is not identically zero, which means that $u^0_n$ does not vanish identically either. Let $u_n$ be the classical, smooth solution to the Euler equations with initial velocity $u_n^0$, and with initial vorticity, $\omega_n^0$. The existence and uniqueness of such solutions follows, for instance, from \cite{McGrath1967} and references therein. Finally, let $\omega_n = \curl u_n$.

\ProofStep{Step 2. Bound velocities in $L^\iny([0, T] \times \Omega)$.}
We begin with the a priori estimate,
\begin{align}\label{e:omegaunLInfBoundR2}
        \norm{\omega_n}_{L^\iny(\R \times \R^2)}
            \le \smallnorm{\omega_n^0}_{L^\iny},
\end{align}
on the vorticity, which can be deduced from the fact that the smooth vorticity is transported by a smooth, divergence-free vector field.
We also have, by construction,
\begin{align} \label{e:omegan0un0LInfBoundR2}
           \smallnorm{u_n^0}_{L^\iny}
            \le C \smallnorm{u^0}_{L^\iny},
     \qquad \smallnorm{\omega_n^0}_{L^\iny}
            \le C \smallnorm{\omega^0}_{L^\iny}.
\end{align}

Next, we will use the Serfati identity \refE{SerfatiIdConv} for the fluid domain $\R^2$, with $u_n$, $\omega_n$ in place of $u$, $\omega$.

Let $a$ be any smooth, compactly supported cutoff function that is equal to $1$ in a neighborhood of $0$ (see item (2) of \refD{ESol}). Fix  $\eps > 0$, to be  specified later. Set
\begin{align} \label{e:aeps}
a_\eps = a_\eps (x) = a\left(\frac{x}{\eps}\right).
\end{align}
From \refE{SerfatiIdConv}, for $u_n$, $\omega_n$, it follows that
\begin{align*}
	\abs{u_n(t, x)}
		\le &\abs{u_n^0(x)}
			+ \abs{(a_\eps K^j) *(\omega_n(t) - \omega_n^0)} \\
		&
			+ \int_0^t \abs{\pr{\grad \grad^\perp \brac{(1 - a_\eps) K^j}}
        \stardot (u_n \otimes u_n)(s)} \, ds.
\end{align*}

We will make use of the detailed estimates on the Biot-Savart kernel derived in \refS{BSKernelFullPlane}.

Applying Young's convolution inequality, followed by the localized estimates on the Biot-Savart kernel in $\R^2$ contained in \refP{JKBounds}, we conclude that
\begin{align}\label{e:BasicunBoundR2}
    \begin{split}
	\norm{u_n(t)}_{L^\iny}& \le
\smallnorm{u^0}_{L^\iny}  + (\smallnorm{\omega_n(t)}_{L^\iny}+
\smallnorm{\omega^0}_{L^\iny} )\smallnorm{a_\eps K}_{L^1} \\
		& \qquad
+ \int_0^t \smallnorm{\nabla\nabla^{\perp}[(1-a_\eps)K]}_{L^1} \norm{u_n(s)}_{L^\iny}^2 \, ds \\
        &\le C \smallnorm{u^0}_{L^\iny}
			+ C \eps\smallnorm{\omega^0}_{L^\iny}
			+ \frac{C}{\eps} \int_0^t \norm{u_n(s)}_{L^\iny}^2 \, ds,
    \end{split}
\end{align}
where we also used \refEAndAndAnd{omegaunLInfBoundR2}{omegan0un0LInfBoundR2}{aKBound}{D2KBound} in the last inequality.

Observe that we can choose $\eps > 0$ arbitrarily, even allowing it to depend on $t$, for each fixed $t$.  Let
\begin{align*}
	\eps
		= \eps(t)
		= \pr{\int_0^t \norm{u_n(s)}_{L_\iny}^2 \, ds}^{1/2}.
\end{align*}
We obtain
\begin{align*} % \label{e:unLInfBoundPreSquaringR2}
	\norm{u_n(t)}_{L^\iny}
		&\le C
			+ C \pr{\int_0^t \norm{u_n(s)}_{L_\iny}^2 \, ds}^{1/2},
\end{align*}
so that
\begin{align*} % \label{e:uRLInf2BoundR2}
	\norm{u_n(t)}_{L^\iny}^2
		&\le C
			+ C \int_0^t \norm{u_n(s)}_{L_\iny}^2 \, ds.
\end{align*}
We conclude from Gronwall's lemma that
\begin{align} \label{e:unifbdun}
	\norm{u_n(t)}_{L^\iny}
		\le C e^{Ct}.
\end{align}
Therefore, $u_n$ lies in $L^\iny([0, T] \times \Omega)$ for any $T > 0$, with a bound that is uniform in $n$. This, together with \refEAnd{omegaunLInfBoundR2}{omegan0un0LInfBoundR2}, yields
\begin{align} \label{step2est}
\|u_n(t)\|_S \leq C
\end{align}
for some $C = C(T, u_0) > 0$ and for all $0\leq t \leq T$.

\bigskip

Recall the definition of the space of log-Lipschitz functions $LL$ on a domain $U \subseteq \R^2$:
\begin{align} \label{e:LL}
 LL(U) = \left\{ f \in L^{\iny}(U) \;\Big|\; \sup_{x\neq y}
 \frac{|f(x)-f(y)|}{(1+\log^+|x-y|)|x-y|} <\infty \right\},
\end{align}
where $\log^+(z)=\max\{-\log z, 0\}$. This is a Banach space under the norm given by
\[\|f\|_{LL}:= \|f\|_{L^{\infty}} + \sup_{x\neq y} \frac{|f(x)-f(y)|}{(1+\log^+|x-y|)|x-y|}.\]

\ProofStep{Step 3. Log-Lipschitz bound on \MOC of $(u_n)$ uniform in $n$.}
We have
\[
	\|u_n(t)\|_{LL}
		\le C\|u^0\|_{S}.
\]
This follows immediately from \refL{Morrey} together with the a priori estimate \eqref{step2est} on $\|u_n\|_S$ .

\ProofStep{Step 4. Convergence of flow maps.} Associated to each (smooth) $u_n$ there is a unique (smooth) forward flow map, $X_n$, that is, a map $X_n:[0,T]\times\R^2 \to \R^2$ such that
\begin{align*}
    \left\{
    \begin{array}{rl}
        \partial_t X_n(t,x) = u_n(t,X_n(t,x))
            & \text{for } (t, x) \in (0, T) \times \R^2, \\
        X_n(0,x) =x
            & \text{for } x \in \R^2.
    \end{array}
    \right.
\end{align*}
As $\dv u_n = 0$, the flow maps are measure-preserving.

Let $X_n^{-1} (t,\cdot)$ denote the inverse flow map, also known as the {\em back-to-labels} map.

Let $L$ be a compact subset of $[0, T] \times \R^2$. The  log-Lipschitz \MOC of $u_n$, uniform in $n$ and $t$, implies the following estimates for the forward and for the inverse flow maps:
\begin{align*}
	\abs{X_n(t,x_1)-X_n(t,x_2)}
		&\le C \abs{x_1-x_2}^{e^{-\norm{u_n}_{LL} \abs{T}}}, \\
	\abs{X_n^{-1}(t,y_1)-X_n^{-1}(t,y_2)}
		&\le C \abs{y_1-y_2}^{e^{-\norm{u_n}_{LL} \abs{T}}}.
\end{align*}
We also have
\begin{align*}
    \abs{X_n(t_1, x) - X_n(t_2, x)}
        &\le \norm{u_n}_{L^\iny([0, T] \times \Omega)} \abs{t_1 - t_2}
        \le C \abs{t_1 - t_2}, \\
	\abs{X_n^{-1}(t_1, y) - X_n^{-1}(t_2, y)}
		&\le \norm{u_n}_{L^\iny([0, T] \times \Omega)} \abs{t_1 - t_2}^{e^{-\norm{u_n}_{LL} \abs{T}}}
		\le C \abs{t_1 - t_2}^{e^{-\norm{u_n}_{LL} \abs{T}}},
\end{align*}
see Lemma 8.2 in \cite{MB2002}.
These estimates, uniform in $n$ and $0\leq t \leq T$, yield, using the Arzela-Ascoli theorem, a subsequence of the forward flow map converging uniformly on compact subsets of $L$ to a limit forward flow map, $X$, which is measure-preserving and along which vorticity is transported, just as in Proposition 8.2 of \cite{MB2002}. A simple diagonalization argument gives a subsequence that converges uniformly on any compact subset $L$ of
$[0, T] \times \R^2$. We relabel this subsequence, $(X_n)$. Clearly, the limit flow map $X$ also satisfies the H\"older estimates above.

A similar argument can be used to establish that there also exists a subsequence, which may be taken to be the same, such that $X_n^{-1} (t,\cdot)$ is uniformly convergent, on compact subsets of $\R^2$. It is then simple to show that the limit of this subsequence is $X^{-1}(t,\cdot)$.

\ProofStep{Step 5. Convergence of vorticities}: Define, a.e. $t \in [0,T]$, $\omega(t, x) := \omega^0(X^{-1}(t, x))$. Then we have that $\omega_n \to \omega$ in $L^\iny(0, T; L^p_{loc}(\R^2))$; to see this we adapt the proof given for bounded initial vorticity on page 316 of \cite{MB2002}, that $\omega_n(t) \to \omega(t)$ in $L^1(\R^2)$.

Let $p$ be in $[1, \iny)$. For any compact set $L$ in $\R^2$,
\begin{align*}
	&\norm{\omega_n(t, \cdot) - \omega(t, \cdot)}_{L^p(L)}
		= \norm{\omega_n^0(X_n^{-1}(t, \cdot))- \omega^0(X^{-1}(t, \cdot))}_{L^p(L)} \\
		&\qquad
		\le \norm{\omega_n^0(X_n^{-1}(t, \cdot))
		    - \omega^0(X_n^{-1}(t, \cdot))}_{L^p(L)} \\
		&\qquad\qquad\qquad
			+ \norm{\omega^0(X_n^{-1}(t, \cdot)) - \omega^0(X^{-1}(t, \cdot))}_{L^p(L)} \\
		&\qquad
		= \norm{\omega_n^0 - \omega^0}_{L^p(X_n^{-1}(L))}
			+ \norm{\omega^0(X_n^{-1}(t, \cdot)) - \omega^0(X^{-1}(t, \cdot))}_{L^p(L)}.
\end{align*}
Because $u_n$ is uniformly bounded in $n$, there exists some $R > 0$ such that $X_n^{-1}(L) \subseteq B_R$ for all $n$. Thus,
\begin{align*}
	\norm{\omega_n^0 - \omega^0}_{L^p(X_n^{-1}(L))}
		\le \norm{\omega_n^0 - \omega^0}_{L^p(\ol{B_R})}
		\to 0
\end{align*}
by property (3) of \refP{InitData}.

\Ignore{ % Ignore detailed argument
Now let $(f_k)$ be a sequence in $C_c(B_{2R})$ with $\norm{\omega_0 - f_k}_{L^p(B_R)} \le k^{-1}$.
Then
\begin{align*}
	&\norm{\omega^0(X_n(t, \cdot))) - \omega^0(X(t, \cdot)))}_{L^p(L)} \\
		&\qquad
		\le \norm{\omega^0(X_n(t, \cdot))) - f_k(X_n(t, \cdot)))}_{L^p(L)} \\
		&\qquad\qquad
			+ \norm{f_k(X_n(t, \cdot))) - f_k(X(t, \cdot)))}_{L^p(L)} \\
		&\qquad\qquad
			+ \norm{f_k(X(t, \cdot))) - \omega^0(X(t, \cdot)))}_{L^p(L)} \\
		&\qquad
		= \norm{f_k - \omega^0}_{L^p(X_n^{-1}(L))}
			+ \norm{f_k - \omega^0}_{L^p(X^{-1}(L))} \\
		&\qquad\qquad
			+ \norm{f_k(X_n(t, \cdot))) - f_k(X(t, \cdot)))}_{L^p(L)} \\
		&\qquad
		\le 2 \norm{f_k(t) - \omega^0}_{L^p(B_R)}
			+ \norm{f_k(X_n(t, \cdot))) - f_k(X(t, \cdot)))}_{L^p(L)}.
\end{align*}
Given $\eps > 0$, choose $k$ large enough that $\norm{f_k(t) - \omega^0(t)}_{L^p(B_R)} \le (2/3) \eps$. Since $f_k$ is continuous, $f_k(t, X_n(t, \cdot)) \to f_k(t, X(t, \cdot))$ uniformly on $L$ and so also in $L^p(L)$. Hence, there exists an integer $N$ such that
\begin{align*}
	\norm{f_k(t, X_n(t, \cdot))) - f_k(t, X(t, \cdot)))}_{L^p(L)}
		\le \delta/3
\end{align*}
for all $n \le N$.
} % End Ignore

As for the second term, $\norm{\omega^0(X_n^{-1}(t, \cdot)) - \omega^0(X^{-1}(t, \cdot))}_{L^p(L)}$, it too vanishes as $n\to \infty$ since $\omega^0$ is in $L^p$ and $X_n^{-1}(t,\cdot) \to X^{-1}(t,\cdot)$, uniformly in compact subsets of $\R^2$. Here, we are using the fact that
translations are continuous in $L^p$.

Hence, $\omega_n (t,\cdot) \to \omega(t,\cdot)$ in $L^p_{loc}(\R^2)$. This entire argument can be made uniform over $[0, T]$ by replacing $L^p$-norms with $L^\iny(0, T; L^p)$-norms.

\ProofStep{Step 6. Velocities are Cauchy in $C([0, T] \times L)$}: We have established convergence of the flow maps (and its inverse maps) to a limiting flow map (and its inverse) and convergence of the vorticities to a limiting vorticity, which is transported by the limiting flow map. As shown in Step 3, we also have equicontinuity of $(u_n)$ in space. We now show convergence of the velocities, but we will do this in two steps. First, we will use the Serfati identity once more to show that the sequence, $(u_n)$, is Cauchy in $C([0, T] \times L)$, for any compact subset, $L$, of $\R^2$.

We note, in passing, that at the corresponding point in the proof of Theorem 8.1 of \cite{MB2002} one obtains equicontinuity in time as well, employing potential theory estimates that require the vorticity to decay at infinity. In \cite{Serfati1995A}, Serfati at this same point uses estimates on the pressure (see \refR{Pressure}) to obtain sufficient regularity on $\prt_t u_n = - u_n \cdot \grad u_n - \grad p_n$. In either case, Arzela-Ascoli gives a convergent subsequence in $C([0, T] \times \R^2)$.

Let $x$ belong to $L$ and let $L_\eps = L + B_{c \eps}(0)$, where $a$ is supported in $B_c(0)$.
From \refE{SerfatiIdConv}, for any \textit{fixed} $\eps > 0$,
\begin{align}\label{e:I1I2I3Bound}
	&\abs{u_n(t, x) - u_m(t, x)}
		\le \abs{u_n^0(x) - u_m^0(x)}  + I_1 + I_2 + I_3,
\end{align}
where
\begin{align*}
	I_1
		&= \abs{(a_\eps K^j) *(\omega_n(t) - \omega_m(t))}, \;
	I_2
		= \abs{(a_\eps K^j) *(\omega_n^0 - \omega_m^0)}, \\
	I_3
		&= \int_0^t \abs{\pr{\grad \grad^\perp \brac{(1 - a_\eps) K^j}}
        \stardot (u_n \otimes u_n - u_m \otimes u_m)(s)} \, ds.
\end{align*}

Fix $q$ in $(2, \iny)$ and let $p$ in $(1, 2)$ be the \Holder exponent conjugate to $q$. Then from \refP{Rearrangement} and Young's convolution inequality,
\begin{align*}
	I_1
		&\le C \smallnorm{a_\eps(x - \cdot) K(x - \cdot)}_{L^p(L_\eps)}
			\smallnorm{\omega_n(t) - \omega_m(t)}_{L^q(L_\eps)} \\
		&\le \frac{C \eps^{2 - p}}{2 - p}
			\smallnorm{\omega_n(t) - \omega_m(t)}_{L^q(L_\eps)}
\end{align*}
and, similarly,
\begin{align*}
	I_2
		&\le \frac{C \eps^{2 - p}}{2 - p}
			\smallnorm{\omega^0_n - \omega^0_m}_{L^q(L_\eps)},
\end{align*}
while
\begin{align*}
	I_3
		&\le \int_0^t \norm{\grad \grad((1 - a_\eps(x - \cdot))
			K(x, \cdot))}_{L^1(\R^2)} \\
		&\qquad\qquad\qquad
			\norm{(u_m \otimes u_m - u_n \otimes u_n)
			(s, \cdot)}_{L^\iny(\R^2)} \, ds \\
		&\le C \eps^{-1} \int_0^t
			\norm{(u_m - u_n) (s, \cdot)}_{L^\iny(\R^2)} \, ds
		\le C t \eps^{-1}.
\end{align*}
Here, we used \refE{D2KBound} and the identity,
$
	u_m \otimes u_m - u_n \otimes u_n
		= u_m \otimes (u_m - u_n) + u_n \otimes (u_m - u_n),
$
with the uniform bound on the sequence, $(u_k)$, in $L^\iny([0, T] \times \R^2)$.

Thus,
\begin{align*}
	&\abs{u_n(t, x) - u_m(t, x)}
		\le \abs{u_n^0(x) - u_m^0(x)} + C t \eps^{-1} \\
		&\qquad
			+ \frac{C \eps^{2 - p}}{2 - p}
				\brac{\norm{\omega_n(t, \cdot) -  \omega_m(t, \cdot)}_
				    {L^q(L_\eps)}
			+ \norm{\omega_n^0 -  \omega_m^0}_{L^q(L_\eps)}}.
\end{align*}
For concreteness, we choose $p = 3/2$, so that $q = 3$.
Taking the supremum over all $(t, x)$ in $[0, T] \times L$ gives
\begin{align*}
	&\norm{u_n - u_m}_{L^\iny([0, T] \times L)}
		\le \norm{u_n^0 - u_m^0}_{L^\iny(L)} + C t \eps^{-1} \\
		&\qquad
			+ C \eps^{\frac{1}{2}}
			    \brac{\smallnorm{\omega_n -  \omega_m}_{L^\iny([0, T];
			        L^3(L_\eps))}
			+ \norm{\omega_n^0 -  \omega_m^0}_{L^3(L_\eps)}}.
\end{align*}

Now, given any $\delta > 0$, let $\eps = 1/\delta$. Then choose $N$ large enough that
\begin{align*}
	\norm{\omega_n -  \omega_m}_{L^\iny([0, T]; L^3(L_\eps))}
		+ \norm{\omega_n^0 -  \omega_m^0}_{L^3(L_\eps)}
		< \delta
\end{align*}
and $\norm{u_n^0 - u_m^0}_{L^\iny(L)} < \delta$ for all $n, m > N$.
It follows that
\begin{align*}
	\norm{u_n - u_m}_{L^\iny([0, T] \times L)}
		&< \delta + C \delta + C \delta^{1/2}.
\end{align*}

This shows that the sequence, $(u_n)$, is Cauchy in $C([0, T] \times L)$ (without the need to take a further subsequence).

\ProofStep{Step 7. Convergence to a solution}: Since $(u_n)$ is Cauchy in $C([0, T] \times L)$, for any compact subset $L$ of $\R^2$, and divergence-free for each $n$, the sequence converges to some divergence-free vector field, $u$, lying in $C([0, T] \times L)$; hence, in fact, the convergence is uniform in any compact subset of $[0, T] \times \R^2$. We also have shown  that $\omega_n \to \omega$ in $L^\iny(0, T; L^p_{loc}(\R^2))$ for all $p$ in $[1, \iny)$. In particular, we have that $\omega_n \to \omega$ in $L^\iny(0, T; L^1_{loc}(\R^2))$ and $L^\iny(0, T; L^2_{loc}(\R^2))$, while $u_n \to u$ in $L^\iny(0, T; L^2_{loc}(\R^2))$.
% Combining the convergence of $u_n$ with that of $\omega_n$, we have that $(u_n)$ is Cauchy in $L^\iny(0, T; W^{1, p}_{loc}(\R^2))$; hence
It follows by linearity that $\omega = \curl u$.

Let $\Cal{D} = C_c^{\infty} ((0, T) \times \R^2)$ and let $((\cdot, \cdot))$ represent the duality pairing between the distributions in $\Cal{D}'$ and the test functions in $\Cal{D}$. To show that $\prt_t \omega + u \cdot \grad \omega = 0$ in $\Cal{D}'$ we must show that
\begin{align*}
((\omega, \prt_t \varphi)) + ((u \cdot \grad \varphi, \omega)) = 0
\end{align*}
for all $\varphi$ in $\Cal{D}$. This follows immediately from the convergences pointed out above.

That the Serfati identity \refE{SerfatiIdConv} %\refEAnd{SerfatiId}{SerfatiIdJ}%
holds for $u$ regardless of the choice of the cutoff function, $a$, follows from these same convergences and the observation that $(u_n)$ is bounded in $L^\iny$. Indeed, let $a$ be a cutoff function as in item (2) of \refD{ESol}. The terms on the left-hand-side of \refE{SerfatiIdConv}, valid for $u_n$, clearly converge to the corresponding terms with $u$ in place of $u_n$. The first term on the right-hand-side of \refE{SerfatiIdConv}, $(aK^j)\ast(\omega_n(t) - \omega_n^0)$, also converges to the corresponding term with $\omega$ instead of $\omega_n$, since
$aK \in L^q_{loc}$, $1 \leq q < 2$ (see \refP{Rearrangement}), while $\omega_n \to \omega$ in $L^\iny(0, T; L^p_{loc}(\R^2))$ for all $p$ in $[1, \iny)$. It remains to examine the last term on the right-hand-side of \refE{SerfatiIdConv},
$\ds{- \int_0^t \pr{\grad \grad^\perp \brac{(1 - a) K^j}}\stardot (u_n \otimes u_n)(s) \, ds}$.
Fix $x \in \mathbb{R}^2$. Let $M$ be such that the support of $a$ is in the ball of radius $M$. We split the integral in the convolution into an integral on $\{|x-y|\leq 2M\}$ and an integral on $\{|x-y| > 2M\}$. In the bounded set the integrals with $u_n$ converge to the corresponding integrals for $u$ simply because $u_n$ converges to $u$ in $C([0, T] \times L)$, for any compact subset $L$ of $\mathbb{R}^2$. In the unbounded set we note that
$\grad \grad^\perp \brac{(1 - a) K^j} = (1-a)\grad\grad^{\perp}K^j$, since the derivatives of $a$ vanish in $\mathbb{R}^2 \setminus B_{2M}$. In the course of proving \refP{JKBounds} we establish that
\[|(1-a)\grad\grad^{\perp}K^j|(y) \leq \frac{C}{|y|^3}.\]
This is enough to show that the $L^1$-norm of $(1-a)\grad\grad^{\perp}K^j$ on the set $\{y \;|\; |x-y| > 2M\}$ is bounded from above by $(2M)^{-1}$. Hence, since $u_n$ is bounded in $L^\iny$, the integral on the unbounded set can be made arbitrarily small. This establishes \refE{SerfatiIdConv} for $u$.

\ProofStep{Step 8. \CapMOC of the velocity}:
The limit velocity $u(t)$ has a log-Lipschitz \MOC; this follows either from \refL{Morrey} or directly from the convergence of $(u_n)$ with a uniform bound on the log-Lipschitz \MOC on compact subsets.
\end{proof}

%
% Part
%

\PART{Part II: Existence---the case of an exterior domain}

\section{Existence in an exterior domain}\label{S:ExistenceExt}

\noindent The proof of existence in an exterior domain closely parallels that for the whole plane; in this section, we report only on the differences between the proofs. The derivation of the Serfati identity requires the majority of the effort, as it now requires us to treat boundary integrals. We give its derivation in \refS{SerfatiIdExt}. The proof of existence itself requires modifications in only two steps of the whole-plane proof of  \refS{ProofExistenceR2}: in \refS{ProofExistenceExt} we supply the details.

The sequence of smooth approximating solutions in an exterior domain that we employ in our proof of existence are those constructed by Kikuchi in \cite{Kikuchi1983}. 

Throughout this section let $\Omega$ denote the domain exterior to a bounded, smooth, connected and simply connected obstacle.

\begin{theorem}\label{T:Kikuchi}[Kikuchi, \cite{Kikuchi1983}]
	Fix $T > 0$.
	Let $u^0 \in C^\iny(\Omega)$ with $\omega(u^0)$
	compactly supported (this is more regularity than Kikuchi
	requires).
	There exists a unique classical solution, $(u, p)$, to the Euler
	equations without forcing,
	having $u^0$ as initial velocity,
   such that the vorticity is transported by the flow map,
   the circulation of $u(t)$ about $\prt \Omega$
   is conserved over time, and $u(t, x) \to 0$
	as $\abs{x} \to \iny$. Moreover, $u \in C^1([0, T] \times \Omega))$
	and $\grad p \in C([0, T] \times \Omega))$.
\end{theorem}

%
% Section
%
\subsection{The Serfati identity in an exterior domain}\label{S:SerfatiIdExt}

In this subsection we show that the alternate Serfati identity in \refE{SerfatiIdJ} holds for any radially symmetric, smooth, compactly supported cutoff function $a$, with $a = 1$ in a neighborhood of the origin.

Recall the hydrodynamic Biot-Savart kernel $\JJ$ as defined in \refE{JJDef}, and the divergence-free vector field, tangential to $\prt \Omega$, having circulation one around $\prt \Omega$ and decaying at infinity, $\KKol$ as given in \refE{olK}.

\begin{prop}\label{P:SerfatiKeyExt}
	Let $u$ be a $C^\iny$ smooth solution to the Euler equations with
	initial vorticity $\omega^0$,
	compactly supported, as given by \refT{Kikuchi}. Let the function, $a$,
	be as in (2) of \refD{ESol}.
	Then the Serfati identity, \refE{SerfatiId}, holds, and we also have
	\begin{align}\label{e:SerfatiIdJ}
		\begin{split}
			&u^j(t, x) \\
				&\quad
				= (u^0)^j(x)
				+ \int_\Omega a(x - y)
						\JJ^j(x, y) (\omega(t, y) -  \omega^0(y)) \, dy \\
				&\qquad
				- \int_0^t \int_\Omega \pr{u(s, y) \cdot \grad_y} \grad_y^\perp
						\brac{(1 - a(x - y)) \JJ^j(x, y)} \\
				&\qquad\qquad\qquad\qquad\qquad\qquad\qquad\qquad
				\cdot u(s, y) \, dy \, ds \\
				&\qquad
				- \frac{\KKol^j(x)}{2} \int_0^t \int_\Gamma
				    \abs{u(y(\sigma))}^2 \,
					\grad a(x - y(\sigma))
					\cdot \BoldTau \, d\sigma \, ds,
		\end{split}
	\end{align}
	where
	$y = y(\sigma)$ is a parameterization by arc length of $\partial \Omega$.
\end{prop}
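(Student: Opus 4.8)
The plan is to mimic the full-plane derivation in \refP{SerfatiKeyR2}, but now every integration by parts in the spatial variable $y$ over $\Omega$ will generate a boundary term on $\Gamma$, and the Biot-Savart kernel $\KK$ must be replaced by the hydrodynamic kernel $\JJ = \KK + \KKol$. First I would start from the modified Biot-Savart representation of the smooth Kikuchi solution: since $\JJ$ in place of $\KK$ gives the unique solution of \eqref{ellipsys} with vanishing circulation, and the Kikuchi solution has circulation $\gamma$ conserved in time, I write $u(t) = \JJ[\omega(t)] + \gamma \KKol$ (or absorb the circulation term into the constant-in-time piece so it cancels when we subtract $u^0$). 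Because $\omega(t)$ is compactly supported and smooth, this integral converges and is differentiable in $t$. Then, exactly as in the full-plane case, I split $\JJ^j(x,y) = a(x-y)\JJ^j(x,y) + (1-a(x-y))\JJ^j(x,y)$, integrate $\prt_t u^j = \prt_t \JJ[\omega]$ in time, and move the time derivative onto $\omega$ in the far-field term, using $\prt_t\omega = -u\cdot\grad\omega = -\dv(u\omega)$ (valid since $\dv u = 0$). This produces
\[
	u^j(t,x) - (u^0)^j(x)
		= \int_\Omega a(x-y)\JJ^j(x,y)(\omega(t,y)-\omega^0(y))\,dy
		- \int_0^t \int_\Omega (1-a(x-y))\JJ^j(x,y)\,\dv_y(u\omega)(s,y)\,dy\,ds.
\]

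Next I would integrate by parts twice in the last integral to move the two $y$-derivatives off $u\omega$ and onto the kernel $(1-a(x-y))\JJ^j(x,y)$, converting $\dv_y(u\omega)$ into a double-derivative contraction against $u\otimes u$, using that $\omega = \curl u$ and the analogues of identities \refE{CurlIBP}–\refE{gradperpdivIBP} adapted to a domain. The new feature relative to $\R^2$ is that each integration by parts picks up a boundary integral over $\Gamma$ with the normal $\n$; one of these vanishes because $u\cdot\n = 0$ on $\Gamma$, but the other survives. The surviving boundary term should, after using $u\cdot\n=0$ to replace $u$ on $\Gamma$ by $(u\cdot\BoldTau)\BoldTau$ and noting $\abs{u}^2 = (u\cdot\BoldTau)^2$ there, together with the fact that $\grad_y[(1-a(x-y))\JJ^j(x,y)]$ on $\Gamma$ reduces (since $\JJ(x,\cdot)$ is tangential on $\Gamma$ and the relevant normal-direction contribution comes through $\grad a$) to something proportional to $\KKol^j(x)\,\grad a(x-y)\cdot\BoldTau$, yield exactly the last line of \refE{SerfatiIdJ}, with the factor $-\tfrac12$ coming from the symmetrization in the double integration by parts. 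I would also need to check that the near-field boundary contribution from integrating by parts the $a$-part is either absent (because $a=1$ near $\Gamma$-relevant points only if $x$ is near $\Gamma$) or is accounted for — more carefully, the split is by $a(x-y)$, so whether $\Gamma$ meets $\supp a(x-\cdot)$ depends on $\dist(x,\Gamma)$, and the derivation must be uniform in $x$; this is handled by keeping all boundary terms and letting the algebra combine them.

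The main obstacle I expect is the careful bookkeeping of the boundary terms: identifying precisely which combination of $\prt_y$ acting on $(1-a(x-y))\JJ^j(x,y)$, evaluated on $\Gamma$ and paired with $u\otimes u$ restricted to $\Gamma$, collapses to the clean expression $\tfrac12\KKol^j(x)\abs{u}^2\,\grad a(x-y)\cdot\BoldTau$. This requires using three structural facts in combination: that $\JJ(x,\cdot) = \KK(x,\cdot) + \KKol$ with $\KK(x,\cdot)$ vanishing on $\Gamma$ so only $\KKol$ and the $\grad a$ term contribute on the boundary; that $\KKol$ is divergence-free and tangent to $\Gamma$ with unit circulation; and that $u$ on $\Gamma$ is purely tangential so $u^i u^j n_i = (u\cdot\n)(u\cdot\BoldTau)\tau^j = 0$ kills the "wrong" boundary term while the surviving one involves $\abs u^2$. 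A secondary technical point is justifying that the modified kernel $\JJ(x,\cdot)$, which no longer decays like $\KK$ but is designed (via adding $\KKol$) to decay as $\abs y\to\iny$, makes all the integrals — including after the two integrations by parts, where $\grad\grad^\perp\JJ$ must be integrable near infinity — absolutely convergent; for the smooth compactly-supported-vorticity Kikuchi solutions this is immediate, and these are the only solutions to which \refP{SerfatiKeyExt} is applied. Once \refE{SerfatiIdJ} is established, \refE{SerfatiId} itself follows by substituting $\JJ = \KK + \KKol$ back in and observing that the $\KKol$ contributions, together with the boundary term, reorganize into the stated form (or are shown to cancel), which is a finite algebraic manipulation.
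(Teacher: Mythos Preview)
Your overall strategy is correct and matches the paper: start from $\prt_t u^j = \prt_t\int_\Omega \JJ^j(x,y)\omega\,dy$ (using conservation of circulation and of $\int\omega$), split the kernel with $a$, and integrate by parts twice in the far-field piece. But your account of the boundary term is off in two respects, and the paper's mechanism is cleaner than what you sketch.

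First, the paper does \emph{not} derive \refE{SerfatiId} from \refE{SerfatiIdJ}. It proves them in parallel from the same starting point $\prt_t u^j = \prt_t\int\KK^j\omega = \prt_t\int\JJ^j\omega$: using $\KK$ gives \refE{SerfatiId} with no boundary term because $\KK(x,\cdot)=0$ on $\Gamma$; using $\JJ$ gives \refE{SerfatiIdJ} with a boundary term because $\JJ(x,\cdot)|_\Gamma = \KKol(x)\neq 0$. There is no need to substitute one kernel into the other and chase cancellations.

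Second, the boundary term does not arise the way you suggest. The paper writes $u\cdot\grad\omega = \curl(u\cdot\grad u) = -\dv[(u\cdot\grad u)^\perp]$ and integrates by parts once:
\[
-\int_\Omega (1-a)\JJ^j\,\dv[(u\cdot\grad u)^\perp]
= \int_\Omega (u\cdot\grad u)\cdot\grad_y^\perp[(1-a)\JJ^j]
+ \int_\Gamma (1-a)\,\JJ^j(x,y)\,[(u\cdot\grad u)\cdot\BoldTau]\,d\sigma.
\]
The surviving boundary term thus involves the kernel \emph{itself} on $\Gamma$, namely $\JJ^j(x,y)|_\Gamma = \KKol^j(x)$, not its gradient. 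The second integration by parts (moving $u\cdot\grad_y$ onto $\grad_y^\perp[(1-a)\JJ^j]$) produces no boundary term, since that one is $\int_\Gamma (u\cdot\n)(\cdots)=0$. The factor $\tfrac12$ does not come from any symmetrization; it comes from the pointwise identity on $\Gamma$
\[
[(u\cdot\grad)u]\cdot\BoldTau = \tfrac12\,\tfrac{d}{d\sigma}\abs{u}^2,
\]
valid because $u\cdot\n=0$ forces $u=(u\cdot\BoldTau)\BoldTau$ on $\Gamma$. One then integrates by parts \emph{along the closed curve} $\Gamma$, moving $\tfrac{d}{d\sigma}$ onto $(1-a(x-y(\sigma)))$, which produces $\grad a(x-y)\cdot\BoldTau$. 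This tangential integration by parts is the step missing from your sketch, and it is what simultaneously generates the $\tfrac12$, the $\abs{u}^2$, and the $\grad a\cdot\BoldTau$.
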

\begin{proof}
Denote the circulation of $u$ about $\prt \Omega$ by
\begin{align*}
	\Gamma(u) = \int_\Gamma u \cdot \BoldTau,
\end{align*}
and the mass of the corresponding vorticity $\omega=\omega(u)$ by
\begin{align*}
	m(\omega) = \int_{\Omega} \omega.
\end{align*}

For smooth solutions of the Euler equations in $\Omega$, both of these quantities are conserved.
Because $\omega^0$ is compactly supported and $\omega(u)$ is transported by the flow map, $\omega(u)$ remains compactly supported for all time. This fact and the smoothness of the solution justify the calculations that follow.

Set
\begin{align*} % \label{e:KKomega}
	\KK[\omega]
		= \int_\Omega \KK(x, y) \omega(y) \, dy,
	\quad
	\JJ[\omega]
		= \int_\Omega \JJ(x, y) \omega(y) \, dy,
\end{align*}
and note that both integrals converge, since $\omega$ is compactly supported.

Observe that
\begin{align*} % \label{e:KKomega}
\JJ[\omega]	= \KK[\omega] + m(\omega) \KKol.
\end{align*}

Since $u$ conserves circulation over time, $\KKol$ has unit circulation, and $\JJ$ has zero circulation we have
\begin{align*}
	u &	= \JJ[\omega] + \Gamma(u^0) \KKol(x) \\
     & = \KK[\omega] + [m(\omega^0) + \Gamma(u^0)] \KKol (x).
\end{align*}
Hence,
\begin{align}\label{e:prtuClassical}
	\prt_t u^j(x)
		&= \prt_ t \int_\Omega \JJ^j(x, y) \omega(t, y) \, dy
		= \prt_ t \int_\Omega \KK^j(x, y) \omega(t, y) \, dy,
\end{align}
	where we have used both the conservation of $m(\omega)$ and of circulation.

Starting with \refE{prtuClassical} and using the vorticity equation \refE{EulerEqsVortForm}, we have,
\begin{align} \label{e:SerfatiIdStartExtH}
    \begin{split}
        \prt_t u^j(t, x)
%            &= \int_\Omega a(x - y) \KK^j(x, y) \prt_t \omega(t, y) \, dy \\
%            &\qquad
%                + \int_\Omega (1 - a(x - y)) \KK^j(x, y)
%                    \prt_t \omega(t, y) \, dy \\
            &= \prt_t \int_\Omega a(x - y) \KK^j(x, y)
                \omega(t, y) \, dy \\
            &\qquad
                - \int_\Omega (1 - a(x - y)) \KK^j(x, y)
                    (u \cdot \grad \omega)(t, y) \, dy,
    \end{split}
\end{align}
	$j = 1, 2$. We rewrite the last term as before as
\begin{align} \label{e:SerfatiIDIBP1}
	\begin{split}
    - \int_\Omega &(1 - a(x - y)) \KK^j(x, y) (u \cdot \grad \omega)(t, y) \, dy \\
	 	&= - \int_\Omega (1 - a(x - y)) \KK^j(x, y) \curl (u \cdot \grad u)(t, y) \, dy \\
	 	&= \int_\Omega (1 - a(x - y)) \KK^j(x, y)
	 	    \dv \brac{(u \cdot \grad u)^\perp(t, y)} \, dy \\
	 	&= - \int_\Omega \brac{(u \cdot \grad u)^\perp(t, y)}
			\cdot \grad \brac{(1 - a(x - y)) \KK^j(x, y)} \, dy \\
	 	&= \int_\Omega (u \cdot \grad u)(t, y)
			\cdot \grad^\perp \brac{(1 - a(x - y)) \KK^j(x, y)} \, dy.
	\end{split}
\end{align}
The boundary integral above vanishes because $\KK(x, \cdot) = 0$ on the boundary.

Let $V$ be a vector field on $\Omega$ and recall the following identity:
\begin{align*}
	(u \cdot \grad) (V \cdot u) = [(u \cdot \grad) V] \cdot u + [(u \cdot \grad) u] \cdot V.
\end{align*}

Integrating on $\Omega$, we obtain
\begin{align}\label{e:SerfatiIDIBP2}
	\begin{split}
	\int_\Omega [(u \cdot \grad) u] \cdot V
		&= \int_\Omega (u \cdot \grad) (V \cdot u) - \int_\Omega [(u \cdot \grad) V] \cdot u \\
		&=  - \int_\Omega (u \cdot \grad V) \cdot u,
	\end{split}
\end{align}
the first integral vanishing in integrating by parts since $\dv u = 0$ and $u \cdot \n = 0$.

Using \refE{SerfatiIDIBP2} with
$V = \grad^\perp \brac{(1 - a(x - y)) \KK^j(x, y)}$, putting the resulting term back into \refE{SerfatiIdStartExtH}, and integrating in time yields \refE{SerfatiId}.

To obtain \refE{SerfatiIdJ}, we return to \refE{prtuClassical}, writing,
\begin{align*}
	\prt_t &u^j(x)
		=  \prt_t \int_\Omega a(x - y) \JJ^j(x, y) \omega(y) \, dy
			+ \int_\Omega (1 - a(x - y)) \JJ^j(x, y) \prt_t \omega(y) \, dy,
\end{align*}
$j = 1, 2$. Integrating the last term by parts as we did in \refE{SerfatiIDIBP1}, we now have the additional, boundary integral (using $\JJ(x, y) = \KKol(x)$ when $y$ is on $\prt \Omega$):
\begin{align}\label{e:AddBoundInt}
    \begin{split}
	 \int_\Omega &(1 - a(x - y)) \JJ^j(x, y) \prt_t \omega(y) \, dy \\
	 	&= \int_\Omega (u \cdot \grad u)(y)
			\cdot \grad^\perp \brac{(1 - a(x - y)) \JJ^j(x, y)} \, dy \\
		&\qquad
			+ (\KKol^j(x) \int_\Gamma [u(y(\sigma)) \cdot \grad u(y(\sigma))]^\perp \cdot \n \,
				(1 - a(x - y(\sigma) )) \, d\sigma.
	\end{split}
\end{align}

The first term on the right-hand side we integrate by parts once more, as we did in proving \refE{SerfatiId},
the vanishing of $u \cdot \n$ on the boundary again being used to eliminate the boundary term.
For the second term, which contains the boundary integral, we use the identity,
\begin{align*}
	[(u &\cdot \grad) u] \cdot \BoldTau
		= \brac{(u \cdot \n \, \partial_{\n} + u\cdot\BoldTau \, \partial_{\BoldTau}) u}
		        \cdot \BoldTau \\
		&= (u\cdot\BoldTau) \partial_{\BoldTau} (u \cdot \BoldTau)
		= u \partial_{\BoldTau} u
		= \frac{1}{2} \diff{}{\sigma} \abs{u(y(\sigma)}^2.
\end{align*}
To make sense of $\prt_{\n}$, we extended $\n$ into a tubular neighborhood of the boundary. Since $u \cdot \n = 0$, the term containing $\prt_{\n}$ then vanished.

Integrating the boundary integral in \refE{AddBoundInt} by parts gives
\begin{align*}
	\int_\Gamma &[u(y(\sigma)) \cdot \grad u(y(\sigma))]^\perp \cdot \n \,
			(1 - a(x - y(\sigma) )) \, d\sigma \\
		&= \frac{1}{2} \int_\Gamma \diff{}{\sigma} \abs{u(y(\sigma))}^2
			\, (1 - a(x - y(\sigma) )) \, d\sigma \\
		&= \frac{1}{2} \int_\Gamma \abs{u(y(\sigma))}^2 \, \diff{}{\sigma} a(x - y(\sigma) )) \, d\sigma \\
		&= - \frac{1}{2} \int_\Gamma \abs{u(y(\sigma))}^2 \, \grad a(x - y(\sigma))
				\cdot \diff{y(\sigma)}{\sigma} \, d\sigma.
\end{align*}
This yields \refE{SerfatiIdJ}, since $\diff{y(\sigma)}{\sigma} = \BoldTau$.
\end{proof}

To control the boundary term in \refE{SerfatiIdJ}, we need control not just on the size of the integrands, but cancellation due to the velocity field itself. This is easily obtained from the simple bound in \refP{JBoundaryTermBound}.

\Ignore{ % Ignore
\begin{proof}
	Applying the chain rule gives.
	\begin{align*}
		 \frac{1}{2} &\diff{}{\sigma} \abs{u(y(\sigma)}^2
		 	= \frac{1}{2} \diff{}{\sigma} \sum_i (u^i(y(\sigma)))^2
			= u^i(y(\sigma)) \diff{}{\sigma} u^i(y(\sigma)) \\
			&= u^i(y(\sigma)) \prt_j u^i(y(\sigma)) \pdx{y_j(\sigma)}{\sigma}
			= u^i(y(\sigma)) \prt_j u^i(y(\sigma)) \BoldTau(\sigma) \\
			&= u(y(\sigma)) \cdot (\grad u(y(\sigma)) \cdot \BoldTau(\sigma))
			= (u(y(\sigma)) \cdot \grad u(y(\sigma))) \cdot \BoldTau(\sigma).
	\end{align*}
	The last equality holds simply because $u$ is parallel to $\BoldTau$.
\end{proof}
} % End Ignore

\begin{prop}\label{P:JBoundaryTermBound}
	Let $\eps > 0$ and set $a_{\eps}$ as in \refP{JKBounds}. Let $u$ be a
	continuous vector field on $\Omega$ which is tangent to the boundary. Then there exists $C>0$
	such that
	\begin{align*}
		\abs{\frac{\KKol^j(x)}{2} \int_\Gamma \abs{u(y(\sigma))}^2 \, \grad a_\eps(x - y(\sigma))
				\cdot \BoldTau \, d\sigma}
			\le \frac{C}{\eps} \norm{u}_{L^\iny}^2.
	\end{align*}
\end{prop}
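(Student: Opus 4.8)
The plan is to reduce the boundary integral estimate to three elementary facts: the pointwise boundedness of $\KKol$ on $\Gamma$, the scaling behaviour of $\grad a_\eps$, and the fact that $a_\eps(x-\cdot)$ is supported in a ball of radius $C\eps$ about $x$, which limits the arc-length measure of the relevant piece of $\Gamma$. First I would pull $\abs{u(y(\sigma))}^2$ out in $L^\iny$, since $u$ is continuous and tangent to the boundary, bounding that factor by $\norm{u}_{L^\iny}^2$. This leaves
\begin{align*}
	\abs{\frac{\KKol^j(x)}{2} \int_\Gamma \abs{u(y(\sigma))}^2 \, \grad a_\eps(x - y(\sigma)) \cdot \BoldTau \, d\sigma}
		\le \frac{\norm{u}_{L^\iny}^2}{2} \abs{\KKol^j(x)} \int_\Gamma \abs{\grad a_\eps(x - y(\sigma))} \, d\sigma.
\end{align*}

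Next I would estimate the two remaining factors. Since $a_\eps(z) = a(z/\eps)$, we have $\grad a_\eps(z) = \eps^{-1}(\grad a)(z/\eps)$, so $\smallnorm{\grad a_\eps}_{L^\iny} \le C\eps^{-1}$ where $C = \smallnorm{\grad a}_{L^\iny}$ depends only on the fixed cutoff $a$. Moreover, $\grad a_\eps(x - y(\sigma))$ vanishes unless $\abs{x - y(\sigma)} \le c\eps$ (with $a$ supported in $B_c(0)$), so the integrand is supported on the set $\Gamma \cap B_{c\eps}(x)$. Because $\Gamma = \prt\Omega$ is a fixed smooth compact curve, its arc-length measure inside any ball of radius $c\eps$ is bounded by $C\eps$ (indeed by $C\min\{\eps, \diam \Gamma\}$, but $C\eps$ suffices); hence $\int_\Gamma \abs{\grad a_\eps(x - y(\sigma))}\,d\sigma \le C\eps^{-1}\cdot C\eps = C$. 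Finally, $\KKol$ is a fixed smooth vector field (given explicitly in \refE{olK}), continuous up to and bounded on the compact curve $\Gamma$, so $\abs{\KKol^j(x)} \le C$ uniformly for $x \in \Gamma$. Combining these bounds gives the claimed $\frac{C}{\eps}\norm{u}_{L^\iny}^2$.

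I should note one point of care: the estimate is stated for $x$ ranging over $\Omega$ (not just $\Gamma$), and when $x$ is far from $\Gamma$ the integral is simply zero, which is consistent; when $x$ is near $\Gamma$, the bound $\abs{\KKol^j(x)}\le C$ still holds because $\KKol$ is bounded on a neighbourhood of $\Gamma$, and the arc-length bound $\int_{\Gamma\cap B_{c\eps}(x)} d\sigma \le C\eps$ holds uniformly in $x$ by smoothness of $\Gamma$. The main (minor) obstacle is just making the arc-length estimate uniform in $x$ and $\eps$; this follows from a standard covering/parametrization argument using that $\Gamma$ is $C^\infty$ and compact, so its curvature is bounded. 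Everything else is bookkeeping with the scaling of $a_\eps$. Note also the cruder bound $\int_\Gamma \abs{\grad a_\eps}\,d\sigma \le \smallnorm{\grad a_\eps}_{L^\iny}\abs{\Gamma} \le C\eps^{-1}$ would already yield $\frac{C}{\eps^2}\norm{u}_{L^\iny}^2$, which is too weak; exploiting the shrinking support is what recovers the sharp $\eps^{-1}$ rate needed to match \refE{C1C2}.
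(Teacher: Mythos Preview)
Your power-counting is off by one factor of $\eps$ throughout, and this causes you to reject precisely the argument that works. When you combine $\abs{\KKol^j(x)} \le C$ (which, incidentally, holds on all of $\Omega$ by \refE{olKBound}, not merely near $\Gamma$) with your refined estimate $\int_\Gamma \abs{\grad a_\eps(x - y(\sigma))}\,d\sigma \le C\eps^{-1}\cdot C\eps = C$, the product is $C\norm{u}_{L^\iny}^2$, not $\frac{C}{\eps}\norm{u}_{L^\iny}^2$. Likewise, the ``cruder bound'' you dismiss in your final paragraph,
\[
\int_\Gamma \abs{\grad a_\eps}\,d\sigma \le \smallnorm{\grad a_\eps}_{L^\iny}\abs{\Gamma} \le C\eps^{-1},
\]
combined with $\abs{\KKol^j(x)}\le C$ gives exactly $\frac{C}{\eps}\norm{u}_{L^\iny}^2$, not $\frac{C}{\eps^2}\norm{u}_{L^\iny}^2$. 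That crude bound \emph{is} the paper's proof: just $\abs{\grad a_\eps}\le C\eps^{-1}$, the finite length of $\Gamma$, and \refE{olKBound}.

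Your support-shrinking argument is thus unnecessary and, as written, yields a bound $C$ that is incomparable with the target $C/\eps$: stronger for small $\eps$, but weaker for large $\eps$. The latter regime is exactly the one used in \refS{ProofExistenceExt}, where $\eps$ is sent to infinity to close the Gronwall argument; a bound of $C$ on the boundary term would leave an uncontrolled $C\int_0^t\norm{u_n(s)}_{L^\iny}^2\,ds$ and prevent the global estimate. The fix is simply to drop the arc-length refinement and use $\abs{\Gamma}<\iny$ directly.
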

\begin{proof}
	This follows from the bound, $$\abs{\grad a_\eps(x - y(\sigma))}
	= \abs{\eps^{-1} \grad a ((x - y(\sigma)) \eps^{-1})}
	\le C \eps^{-1},$$ of \refP{JKBounds} and \refE{olKBound}.
\end{proof}

%
% Section
%
\subsection{Proof of existence in an exterior domain}\label{S:ProofExistenceExt}

\begin{proof}[Proof of existence in \refT{Existence} for an exterior domain]
As mentioned previously, $\Omega$ denotes the domain exterior to a bounded, smooth, connected and simply connected obstacle.

As in our proof of existence for the full plane in \refS{ProofExistenceR2}, we approximate the initial data employing \refP{InitData} and construct smooth solutions to the Euler equations using \refT{Kikuchi}. The key bounds in \refE{omegaunLInfBoundR2}, then, continue to hold on $\Omega$:
\begin{align}\label{e:omegaunLInfBoundExt}
    \begin{split}
        \norm{\omega_n}_{L^\iny(\R \times \Omega)}
            \le \smallnorm{\omega_n^0}_{L^\iny}
            \le C \smallnorm{\omega^0}_{L^\iny},
        \quad
        \smallnorm{u_n^0}_{L^\iny}
            \le C \smallnorm{u^0}_{L^\iny}.
    \end{split}
\end{align}

The proof proceeds in the identical manner to that of \refS{ProofExistenceR2} with the exceptions of two steps in the proof, described below. It is important to observe, though, that the convergences obtained are for compact subsets of $\ol{\Omega}$ and $[0, T] \times \ol{\Omega}$.

As before, we denote the approximate solutions by $u_n$ and $\omega_n$.

\ProofStep{Bound velocities in $L^\iny([0, T] \times \Omega)$}:
Let $a$ be any cutoff function as in (2) of \refD{ESol}. Let $a_{\eps}$ be as in \refP{JKBounds}. 

From \refE{SerfatiIdJ}, substituting $u_n$ and $\omega_n$ for $u$ and $\omega$, we have
\begin{align*}
	\abs{u_n(t, x)}
		\le &\abs{u_n^0(x)}
			+ \abs{\int_\Omega a_\eps(x - y)  \JJ(x, y) (\omega_n(t, y)
					-  \omega_n^0(y)) \, dy} \\
		&
			+ \int_0^t \abs{\int_\Omega \abs{\grad_y \grad_y
			    \pr{(1 - a_\eps(\cdot - y)) \JJ(\cdot, y)}}
				\abs{u_n(s, y)}^2 \, dy} \, ds \\
		&
			+ \abs{\frac{\KKol^j(x)}{2} \int_0^t \int_\Gamma \abs{u_n(s, y(\sigma))}^2
			    \, \grad a_\eps(x - y(\sigma))
				\cdot \BoldTau \, d\sigma \, ds}.
\end{align*}

Applying \refPAnd{JKBounds}{JBoundaryTermBound} to \refE{SerfatiIdJ}, and using \refE{omegaunLInfBoundExt}, it follows from \Holders inequality that, for some constant $C > 0$, independent of $n$,
\begin{align}\label{e:BasicunBoundExt}
	\norm{u_n(t)}_{L^\iny}
		&\le C
			+ C \eps
			+ \frac{C}{\eps} \int_0^t \norm{u_n(s)}_{L_\iny}^2 \, ds
\end{align}
for all $\eps > C_0$, with $C_0$ as in \refP{JKBounds}.

Observe that we can choose $\eps > C_0$ arbitrarily, even allowing it to depend on time. Hence, we can
let
\begin{align*}
	\eps
		= \eps(t)
		= \max \set{C_0 + 1, \pr{\int_0^t \norm{u_n(s)}_{L_\iny}^2 \, ds}^{1/2}}.
\end{align*}
The function $\eps(t)$ is continuous and non-decreasing, with $\eps(0) = C_0 + 1$. Suppose that there exists a finite time, $T_n^*$, at which $\displaystyle{\int_0^{T_n^*}\norm{u_n(s)}_{L_\iny}^2 \, ds}= (C_0+1)^2$. Then it follows directly from \refE{BasicunBoundExt} that $u_n$ lies in $L^\iny([0, T_n^*]; L^\iny)$ with a norm bounded by $C(C_0 + 1)$. After that time,
$\eps(t) = \displaystyle{\int_0^t \norm{u_n(s)}_{L_\iny}^2 \, ds} > C_0 + 1$, and we obtain
\begin{align*} % \label{e:unLInfBoundPreSquaringExt}
	\norm{u_n(t)}_{L^\iny}
		&\le C
			+ C \pr{\int_0^t \norm{u_n(s)}_{L_\iny}^2 \, ds}^{1/2},
\end{align*}
so that
\begin{align*} % \label{e:uRLInf2BoundExt}
	\norm{u_n(t)}_{L^\iny}^2
		&\le C
			+ C \int_0^t \norm{u_n(s)}_{L_\iny}^2 \, ds.
\end{align*}
We conclude from Gronwall's lemma that
\begin{align*}
	\norm{u_n(t)}_{L^\iny}
		\le \max \set{C e^{Ct}, C_0}
		    = C e^{Ct}.
\end{align*}
Thus, $u_n$ lies in $L^\iny([0, T] \times \Omega)$ for any $T > 0$ with a bound that is uniform in $n$.

\ProofStep{Velocities are Cauchy in $C([0, T] \times L)$}: Let $L$ be a compact subset of $\ol{\Omega}$. The only change to the proof of this step in \refS{ProofExistenceR2} is that $\JJ$ is used in place of $K$ in the expressions for $I_1$, $I_2$, and $I_3$ in \refE{I1I2I3Bound}, which also includes the additional term,
\begin{align*}
	I_4
		&=
		\abs{\frac{\KKol^j(x)}{2} \int_0^t \int_\Gamma
		(\abs{u_n(y(\sigma))}^2 - \abs{u_m(y(\sigma))}^2)
				\, \grad a_\eps(x - y(\sigma))
				\cdot \BoldTau \, d\sigma \, ds}.
\end{align*}
\refP{JBoundaryTermBound} and the uniform bound on the sequence, $(u_k)$, in $L^\iny([0, T] \times \Omega)$ gives
\begin{align*}
	I_4
		\le \frac{Ct}{\eps}.
\end{align*}

The estimates on $I_1$, $I_2$, and $I_3$ are unchanged, though now they only hold for $\eps > C_0$. But this is of no matter, since we take $\eps$ to infinity.
\end{proof}

%
% Part
%
\PART{Part III: Uniqueness and continuous dependence on initial data}

\section{Uniqueness and continuous dependence on initial data}\label{S:UniquenessAndCont}

\noindent Our proof of uniqueness, which assumes that the Serfati identity holds, derives from that of Serfati in \cite{Serfati1995A} (who also assumes, implicitly, that the Serfati identity holds). We present the proof in \refS{Uniqueness}. The continuous dependence on initial data of \refT{ContDep} is a modification of our uniqueness proof, and is presented in \refS{ContinuousDependence}.

In this section, $\Omega$ can be either all of $\R^2$ or an exterior domain. In the proofs, we exploit a number of estimates that we derive later in \refS{BSEstimates}. The estimates are stated in terms of $K$ (see \refE{KBSFullPlane}) for the full plane and in terms of $\KK$ (see \refE{KKDef}) for an exterior domain. When $\Omega = \R^2$, we have $\KK(x, y) = K(x - y)$.

%
%----------------------------------------------
%
\subsection{Uniqueness}\label{S:Uniqueness}

\noindent Our proof of uniqueness in \refT{Existence} differs from that of Serfati's  proof in \cite{Serfati1995A} in two key respects. First, we bound, in effect, the quantity $h(t)$ defined in \refE{h}, whereas Serfati bounds the quantity $\int_0^t \abs{h'(s)} \, ds$, which is more difficult to deal with rigorously. Second, we also bound the terms involving the Biot-Savart law differently, via \refP{olgKBound}, to obtain a unified argument that applies both to the full plane and to an exterior domain.

The Serfati identity is used both in the proof of existence and of uniqueness. Following the proof of uniqueness, we compare and contrast the application of the Serfati identity in the two proofs.

\begin{proof}[\textbf{Proof of uniqueness in \refT{Existence}}]
Let $u_1$, $u_2$ be the velocities for two solutions sharing the same initial velocity. Each $u_j$ is log-Lipschitz and it can always be arranged so that $\mu \colon [0, \iny) \to [0, \iny)$ serves as a common, strictly increasing, bounded \MOC  for both $u_1$ and $u_2$ with
\begin{align}\label{e:mu}
	\begin{array}{rl}
	\mu(r) = - C r \log r
		&\text{ for } 0 < r \le e^{-1}, \\
	- r \log r \le \mu(r)
		&\text{ for } r \ge e^{-1}.
	\end{array}
\end{align}

We will assume that the cutoff function, $a$, of \refE{SerfatiId} is equal to $0$ outside of $B_{e^{-1}}$. The choice of $e^{-1}$ is convenient because of the estimates in \refP{hlogBound}. We will also assume that the cutoff function is such that $C_0$ of \refP{JKBounds} is less than 1; thus, the estimates in \refEThrough{D2JBound}{D2KKBound} hold for $\eps = 1$.
% We are not actually free to choose $a$ this way, according to \refD{ESol}, but the changes to handle an arbitrary $a$, while straightforward, would complicate the presentation.

Let $X_j$ be the flow map for $u_j$, $j = 1, 2$.
We will establish uniqueness by showing that $X_1 = X_2$. Let $t$ lie in $[0, T]$. Our approach is to bound the quantity,
\begin{align}\label{e:M}
	M(t)
		&= \int_0^t P(s) \, ds,
\end{align}
where
\begin{align*}
	P(s)
		&= \norm{u_2(s, X_2(s, \cdot)) - u_1(s, X_1(s, \cdot))}_{L^\iny}.
\end{align*}
We do this by obtaining, through a long series of estimates, the inequality
\begin{align}\label{e:MBound}
	M(t)
		\le \int_0^t \nu(M(s)) \, ds,
\end{align}
where $\nu$ is the Osgood \MOC (see \refL{Osgood}) given explicitly in \refE{nu}. Applying \refL{Osgood} to \refE{MBound} gives $M \equiv 0$. Then letting
\begin{align}\label{e:h}
	h(t) = \norm{X_1(t, \cdot) - X_2(t, \cdot)}_{L^\iny},
\end{align}
it follows that
\begin{align}\label{e:Motivatehm}
	\begin{split}
	h(t)
		&= \norm{X_1(t, \cdot) - X_2(t, \cdot)}_{L^\iny} \\
		&= \norm{\int_0^t u_1(s, X_1(s, \cdot))
			- u_2(s, X_2(s, \cdot)) \, ds}_{L^\iny} \\
		&\le \int_0^t \norm{u_1(s, X_1(s, \cdot))
			- u_2(s, X_2(s, \cdot))}_{L^\iny} \, ds \\
		&= M(t).
	\end{split}
\end{align}
Hence, $X_1 \equiv X_2$ so that $u_1 \equiv u_2$, and uniqueness holds.

(It is easy to see that $h(t)$ and $M(t)$ are continuous and bounded, because of the boundedness of $u_1$ and $u_2$. Hence the the inequality in \refEAnd{MBound}{Motivatehm} and the inequalities that follow all contain finite quantities.)

% Since $X_1(0, x) = X_2(0, x) = x$, we have $h(0) = 0$.

We now commence to prove \refE{MBound}. We start by obtaining a bound on the quantity, $\abs{u_1(t, X_1(t, x)) - u_2(t, X_2(t, x))}$, to obtain a bound on $P(t)$, which we will transform to the bound on $M(t)$ in \refE{MBound}.

By the triangle inequality,
\begin{align}\label{e:BasicUniqTriang}
    \begin{split}
	\pabs{u_1(t, X_1(t, &x)) - u_2(t, X_2(t, x))} \\
		&\le \abs{u_2(t, X_1(t, x)) - u_2(t, X_2(t, x))} \\
		&\qquad
			+ \abs{u_1(t, X_1(t, x)) - u_2(t, X_1(t, x))} \\
		&=: A_1 + A_2.
	\end{split}
\end{align}

We easily bound $A_1$ by
\begin{align}\label{e:A1Bound}
	A_1
		\le \mu(\abs{X_1(t, x) - X_2(t, x)})
		\le \mu(h(t)).
\end{align}
We obtain a bound for $A_2$ by subtracting \refE{SerfatiId} for $u_2$ from \refE{SerfatiId} for $u_1$:
\begin{align}\label{e:BasicEDiff}
	\begin{split}
		A_2 &\le
			\abs{\int_\Omega a(X_1(t, x) - y) \KK(X_1(t, x), y)
				(\omega^1(t, y) -  \omega^2(t, y)) \, dy} \\
		&\qquad+ \int_0^t \int_\Omega
				\abs{\grad_y \grad_y \pr{(1 - a(X_1(t, x) - y))
				\KK(X_1(t, x), y)}} \\
		&\qquad\qquad\qquad\qquad\qquad
				\abs{u_1 \otimes u_1 - u_2 \otimes u_2}(s, y)
					\, dy \, ds \\
		&=: B_1 + B_2.
	\end{split}
\end{align}

Because $\ol{\gamma}(y) := a(X_1(t, x) - y)$ is Lipschitz-continuous and has finite-measure support with Lipschitz constant and measure independent of $t$ and $x$, we can apply \refP{olgKBound} to conclude that
\begin{align}\label{e:B1Bound}
	&B_1
		\le C \smallnorm{\omega^0}_{L^\iny} \mu(h(t))
\end{align}
for some constant $C$ depending only upon the cutoff function $a$.

For $B_2$, we have simply,
\begin{align}\label{e:B2Bound}
	\begin{split}
	B_2
		&\le \int_0^t \norm{\grad \grad((1 - a(X_1(t, x) - \cdot))
		    \KK(X_1(t, x), \cdot))}_{L^1} \\
		&\qquad\qquad\qquad
			\norm{(u_2 \otimes u_2 - u_1 \otimes u_1)
			(s, \cdot)}_{L^\iny} \, ds.
	\end{split}
\end{align}
The $L^1$-norm in the integrand above is finite and bounded uniformly in $x$ by \refP{JKBounds}. Using,
\begin{align*}
	u_2 \otimes u_2 - u_1 \otimes u_1
		= u_2 \otimes (u_2 - u_1) + u_1 \otimes (u_2 - u_1),
\end{align*}
because $u_j$ lies in $L^\iny([0, T] \times \Omega)$, we have
\begin{align*}
	B_2
		&\le C \int_0^t \norm{u_2(s) - u_1(s)}_{L^\iny} \, ds \\
		&= C \int_0^t \norm{u_2(s, X_1(s, \cdot))
		    - u_1(s, X_1(s, \cdot))}_{L^\iny} \, ds \\
		&\le C \int_0^t \norm{u_2(s, X_1(s, \cdot))
		    - u_2(s, X_2(s, \cdot))}_{L^\iny} \, ds \\
		&\qquad
			+ C \int_0^t \norm{u_2(s, X_2(s, \cdot))
			    - u_1(s, X_1(s, \cdot))}_{L^\iny} \, ds \\
		&\le C \int_0^t \mu(h(s)) \, ds
			+ C \int_0^t \norm{u_2(s, X_2(s, \cdot))
			    - u_1(s, X_1(s, \cdot))}_{L^\iny} \, ds.
\end{align*}
Here, we used $\abs{u_2(s, X_2(s, \cdot)) - u_2(s, X_1(s, \cdot))} \le \mu(\abs{X_2(s, \cdot) -X_1(s, \cdot)}) \le \mu(h(s))$.

What we have shown is that
\begin{align*}
	\pabs{u_1(t, &X_1(t, x)) - u_2(t, X_2(t, x))} \\
		&\le  C \int_0^t \mu(h(s)) \, ds + C \mu(h(t))
			+ C \int_0^t P(s) \, ds.
\end{align*}
Taking the supremum over all $x$ in $\R^2$ and using \refE{M}, we conclude that
\begin{align*}
	P(t)
		\le C \int_0^t \mu(h(s)) \, ds + C \mu(h(t))
			+ C M(t).
\end{align*}

But $h(t) \le M(t)$ by \refE{Motivatehm}, and $\mu$ is nondecreasing so $\mu(h(t)) \le \mu(M(t))$ and $\mu(h(s)) \le \mu(M(s))$. Thus,
\begin{align*}
	M'(t)
		= P(t)
		\le C \int_0^t \mu(M(s)) \, ds + C \mu(M(t))
			+ C M(t).
\end{align*}
Since $M$ is increasing, we can write
\begin{align*}
	M'(t) \le
		C(1 + t)  \mu(M(t))
			+ C M(t).
\end{align*}
For our purposes, it is easier to weaken this inequality slightly to
\begin{align}\label{e:PBound}
	M'(s) \le
		C(1 + t)  \mu(M(s)) + C M(s)
		= \nu(M(s))
\end{align}
for all $s$ in $(0, t)$,
where
\begin{align}\label{e:nu}
	\nu(r) = C \brac{(1 + t) \mu(r) + r}.
\end{align}
Near $r = 0$, $\mu(r) >> r$, so that $\nu$ is an Osgood \MOC.

In integral form, using $M(0) = 0$, \refE{PBound} becomes
\begin{align*}
	M(t) \le \int_0^t \nu(M(s)) \, ds.
\end{align*}
That $M \equiv 0$ follows from \refL{Osgood}, and since $h(t) \le M(t)$, $h \equiv 0$ as well, which proves uniqueness.
\end{proof}

As in the proof of existence (see \refS{ProofExistenceR2}), we used the Serfati identity above, though we used it quite differently in the two proofs.

We bounded the first term in the Serfati identity ($B_1$ in the proof above) by appealing to \refP{olgKBound}. As we will see in the proof of \refP{olgKBound} below, the bound is obtained by making changes of variables using both flow maps to return the vorticity to the initial time, taking advantage of the transport of vorticity by the flow map. The penalty is that the kernel $\KK$ is distorted, as is the cutoff function. These distortions, however, can be accommodated using the estimates developed on $\KK$ in \refS{BSEstimates}, and using the smoothness of the cutoff function.

In the proof of existence, we estimated the term corresponding to $B_1$ directly with no change of variables, though using the estimates developed on $\KK$ in \refS{BSEstimates}.

In both proofs, the estimate on the second term in the Serfati identity ($B_2$ in the proof above) was done directly. However, at the point in the proof of existence where we first used the Serfati identity, we had not yet established the boundedness of the velocity. Hence, our estimate was necessarily quadratic in the $L^\iny$-norm of the velocity. But using Serfati's insight, we turned this estimate into a linear one and so obtained an $L^\iny$-norm on the velocity.

\begin{prop}\label{P:olgKBound}
	Assume that $u_1, u_2$ are Serfati solutions to the Euler equations
	with vorticities, $\omega_1, \omega_2$ and initial
	vorticities, $\omega^0_1$ and $\omega^0_2$, lying in $S^p$ for
	$p \in (2, \iny]$, where $S^p$ is defined in \refE{Sp}.
	Let $\ol{\gamma}$ be any Lipschitz function on $\Omega$ having
	finite-measure support.
	Let $h = h(t)$ be as in \refE{h} and set $\mu$ to be as in \refE{mu}.
	For all $x$ in $\Omega$,
	\begin{align*}
		\Largepabs{\int_\Omega \olg(y) &\KK^j(x, y)
			(\omega_1(t, y) - \omega_2(t, y)) \, dy} \\
			&\le C \max \set{\smallnorm{\omega^0_1}_{L^\iny},
				\smallnorm{\omega^0_2}_{L^\iny}} \mu(h)
				+ C_p \norm{\omega^0_1 - \omega^0_2}_{L^p_{uloc}}.
	\end{align*}
	The constant, $C$, depends only on the Lipschitz constant and measure
	of the support of $\olg$, and $C_p$ depends only on $p$ and the
	measure of the support of $\olg$.
\end{prop}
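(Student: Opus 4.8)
The plan is to split the integral $\int_\Omega \olg(y) \KK^j(x,y)(\omega_1(t,y) - \omega_2(t,y))\,dy$ into two pieces by writing $\omega_1(t,y) - \omega_2(t,y) = (\omega_1(t,y) - \omega_2(t, X_2(t, X_1^{-1}(t,y)))) + (\omega_2(t, X_2(t,X_1^{-1}(t,y))) - \omega_2(t,y))$; equivalently, I will change variables $y = X_1(t,z)$ in the $\omega_1$-term and $y = X_2(t,z)$ in the $\omega_2$-term, using that each $\omega_k(t,\cdot)$ is the push-forward of $\omega_k^0$ along $X_k(t,\cdot)$, i.e. $\omega_k(t, X_k(t,z)) = \omega_k^0(z)$, and that both flow maps are measure-preserving (Jacobian one). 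This turns the original integral into
\begin{align*}
	\int_\Omega \olg(X_1(t,z)) \KK^j(x, X_1(t,z)) \omega_1^0(z)\,dz
		- \int_\Omega \olg(X_2(t,z)) \KK^j(x, X_2(t,z)) \omega_2^0(z)\,dz.
\end{align*}
I then add and subtract the mixed term $\int_\Omega \olg(X_1(t,z)) \KK^j(x, X_1(t,z)) \omega_2^0(z)\,dz$, yielding a ``flow-map discrepancy'' term carrying the factor $\omega_2^0$ and a ``data discrepancy'' term carrying $\omega_1^0 - \omega_2^0$.

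For the data-discrepancy term, $\int_\Omega \olg(X_1(t,z))\KK^j(x,X_1(t,z))(\omega_1^0(z) - \omega_2^0(z))\,dz$, I undo the change of variables (again using measure preservation) to get $\int_\Omega \olg(y)\KK^j(x,y)(\omega_1^0(X_1^{-1}(t,y)) - \omega_2^0(X_1^{-1}(t,y)))\,dy$. Since $\olg$ has finite-measure support, say of measure $m$, and $\KK^j(x,\cdot) \in L^{p'}_{loc}$ for $p' < 2$ (the \Holder conjugate of $p > 2$; this is exactly the local integrability recorded in \refS{BSEstimates} and used in \refP{Rearrangement}/\refP{JKBounds}), \Holders inequality against the rearranged bound on $\KK^j(x,\cdot)$ over a set of measure $m$ gives $\le C_p \norm{\omega_1^0 - \omega_2^0}_{L^p_{uloc}(X_1^{-1}(\supp \olg))}$, and since $X_1^{-1}(t,\cdot)$ preserves measure this is controlled by $C_p \norm{\omega_1^0 - \omega_2^0}_{L^p_{uloc}}$ with $C_p$ depending only on $p$ and $m$. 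That handles the second term in the claimed bound.

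For the flow-map-discrepancy term, $\int_\Omega \omega_2^0(z)\brac{\olg(X_1(t,z))\KK^j(x,X_1(t,z)) - \olg(X_2(t,z))\KK^j(x,X_2(t,z))}\,dz$, I bound $\abs{\omega_2^0(z)} \le \smallnorm{\omega_2^0}_{L^\iny}$ and must estimate the $L^1_z$ norm (over the relevant support) of the bracketed difference of the map $w \mapsto \olg(w)\KK^j(x,w)$ evaluated at the two nearby points $X_1(t,z)$ and $X_2(t,z)$, whose separation is at most $h(t)$. This is the main obstacle: the function $w \mapsto \olg(w)\KK^j(x,w)$ is only \emph{log-Lipschitz-like} near $w = x$ because of the $\log\abs{x-w}$-type modulus of continuity of $\KK^j$ (it is Lipschitz away from $w=x$ and from $\partial\Omega$, but has a mild logarithmic blow-up of its modulus at the diagonal), and it is $\olg$ that is genuinely Lipschitz. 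The standard way through is to split the $z$-integral into the region where $\min\set{\abs{x - X_1(t,z)}, \abs{x-X_2(t,z)}} \le 2h(t)$ and its complement. On the near-diagonal region, whose $z$-measure is $O(h(t)^2)$ (measure preservation plus the size of a disk of radius $\sim h$), I use that $\olg \KK^j$ is locally in $L^1$ with a controlled $L^1$ modulus — more precisely I bound each of the two terms separately, getting a contribution like $h(t)\abs{\log h(t)}$ after integrating the $\abs{x-w}^{-1}$ singularity over a disk of radius $\sim h$. On the far region, both points stay at distance $\gtrsim h(t)$ from $x$ (and the support of $\olg$ keeps them in a bounded set away from where $\KK^j$ blows up), so there the difference is genuinely Lipschitz with a constant that grows like $1/\dist(w,x) \lesssim 1/h$ near the boundary of the region but is integrable, yielding again a bound of the form $C h(t)\abs{\log h(t)} \le C\mu(h(t))$. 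Combining the two regions and recalling $\mu(r) = -Cr\log r$ for small $r$ (and $\mu$ bounded, increasing), the flow-map term is $\le C\smallnorm{\omega_2^0}_{L^\iny}\mu(h(t))$, with $C$ depending only on the Lipschitz constant and support-measure of $\olg$ and on the Biot--Savart estimates of \refS{BSEstimates}. By symmetry (exchanging the roles of $u_1$ and $u_2$ in the initial splitting) the same argument gives the bound with $\smallnorm{\omega_1^0}_{L^\iny}$; taking the maximum yields the stated inequality. The delicate point to get right is the bookkeeping showing the near-diagonal $z$-region has measure $O(h^2)$ and that the combined singular integral there is $O(h\abs{\log h})$ uniformly in $x$ — this is where the explicit pointwise and rearrangement estimates on $\KK$ from \refS{BSEstimates} are essential.
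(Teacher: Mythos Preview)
Your approach is correct and follows the paper's overall strategy: push both vorticities back to time zero via their respective flow maps, then split. The differences are in the bookkeeping of that split and in how the flow-discrepancy piece is estimated.

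The paper adds and subtracts \emph{two} intermediate terms rather than one, splitting the flow-discrepancy piece further into an $\olg$-difference term $I_1 = \int [\olg(X_1)-\olg(X_2)]\,\KK^j(x,X_2)\,\omega_1^0$ and a $\KK$-difference term $I_2 = \int \olg(X_1)\,[\KK^j(x,X_1)-\KK^j(x,X_2)]\,\omega_1^0$. The term $I_1$ is $O(h)$ immediately from the Lipschitz constant of $\olg$ and \refP{Rearrangement}. For $I_2$ the paper does \emph{not} do a near/far splitting; instead it invokes the ready-made estimate of \refP{hlogBound}, namely $\smallnorm{\KK(x,X_1(z))-\KK(x,X_2(z))}_{L^1_z(U)} \le -C\delta\log\delta$, whose proof in turn rests on \refL{KKAbsBoundFullPlane} and the interpolation trick of choosing $p=-\log\delta$. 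Your near-diagonal/far-from-diagonal decomposition is the more hands-on alternative to this black box and gives the same $-h\log h$ bound; what the paper's route buys is that all the singular-integral work is isolated once in \refP{hlogBound}, keeping the proof of the proposition short.

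One small slip to fix: on your far region the gradient of $\KK^j(x,\cdot)$ scales like $\abs{x-w}^{-2}$, not $\abs{x-w}^{-1}$. This is in fact what produces the logarithm, since $\int_{2h}^{R} h\,r^{-2}\cdot r\,dr \sim -h\log h$; with the $\abs{x-w}^{-1}$ you wrote you would actually get only $O(h)$. Also, your ``by symmetry'' remark at the end is unnecessary: your flow-discrepancy term already carries $\smallnorm{\omega_2^0}_{L^\iny} \le \max\{\smallnorm{\omega_1^0}_{L^\iny},\smallnorm{\omega_2^0}_{L^\iny}\}$, and the paper only invokes the max explicitly in the trivial large-$h$ regime $h \ge e^{-1}$, where the whole integral is bounded directly by \refP{Rearrangement}.
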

\begin{proof}
	Assume first that $h < e^{-1}$.

	Since for Serfati solutions, $\omega_j$ is transported by the
	flow, $X_j$, associated to $u_j$,
	$j = 1, 2$, we have
	\begin{align*}
		\int_\Omega &\olg(y) \KK^j(x, y)
			(\omega_1(t, y) - \omega_2(t, y)) \, dy \\
			&= \int_\Omega \olg(y) \KK^j(x, y)
				\pr{\omega^0_1(X_1^{-1}(t, y))
					- \omega^0_2(X_2^{-1}(t, y))} \, dy.
	\end{align*}
	Alternately making the changes of variable $y = X_1(t, z)$
	and $y = X_2(t, z)$, this becomes, since $X_1$ and $X_2$ are measure-preserving,
	\begin{align*}
		\int_\Omega \olg(X_1(t, z))
				\KK^j(x, X_1(t, z)) \omega^0_1(z) \, dz
		- \int_\Omega \olg(X_2(t, z))
				\KK^j(x, X_2(t, z)) \omega^0_2(z) \, dz.
	\end{align*}	
	We can write this as
	\begin{align*}
		\int_\Omega \olg(y) \KK^j(x, y)
			(\omega_1(t, y) - \omega_2(t, y)) \, dy
			= I_1 + I_2 + I_3,
	\end{align*}
	where
	\begin{align*}
		I_1
			&= \int_\Omega \brac{\olg(X_1(t, z)) - \olg(X_2(t, z))}
				\KK^j(x, X_2(t, z))
				\omega^0_1(z) \, dz, \\
		I_2
			&= \int_\Omega \olg(X_1(t, z)) \brac{\KK^j(x, X_1(t, z))
				- \KK^j(x, X_2(t, z))}
				\omega^0_1(z) \, dz, \\
		I_3
			&= \int_\Omega \olg(X_2(t, z))
				\KK (x, X_2(t, z))
				(\omega_1^0(z) - \omega_2^0(z)) \, dz.
	\end{align*}
	
	Letting
	\begin{align*}
		U
			= \set{z \colon \olg(X_1(t, z)) \ne \olg(X_2(t, z))},
	\end{align*}
	we have
	\begin{align*}
		\abs{I_1}
			&\le \norm{\omega^0_1}_{L^\iny}
				\sup_{z \in \Omega} \abs{\olg(X_1(t, z)) - \olg(X_2(t, z))}
				\int_U \abs{\KK^j(x, X_2(t, z))} \, dz \\
			&\le C \norm{\omega^0_1}_{L^\iny} h
				\int_U \abs{\KK^j(x, X_2(t, z))} \, dz,
	\end{align*}
	where $C$ is the Lipschitz constant for $\olg$.
	But,
	\begin{align*}
		U
			\subseteq X_1(t, \supp \olg) \cup X_2(t, \supp \olg)
	\end{align*}
	has measure bounded in time, since $X_1$ and $X_2$ are
	measure-preserving, and
	\begin{align*}
		\int_U \abs{\KK^j(x, X_2(t, z))} \, dz
			&= \int_{X_2(t, U)} \abs{\KK^j(x, y)} \, dy
			\le C
	\end{align*}
	by \refP{Rearrangement} and
	using $\abs{X_2(t, U)} = \abs{U}$. Hence,
	\begin{align*}
		\abs{I_1} \le C \smallnorm{\omega^0_1}_{L^\iny} h.
	\end{align*}

	Applying \refP{hlogBound}, we can easily bound $I_2$ by
	\begin{align*}
		\abs{I_2}
			&\le \norm{\ol{\gamma}}_{L^\iny} \smallnorm{\omega^0_1}_{L^\iny}
				\smallnorm{\KK^j(x, X_1(t, z)) - \KK^j(x, X_2(t, z))}_
				    {L^1(X_1^{-1}(t, \supp \olg))} \\
%			&= \smallnorm{\omega^0_1}_{L^\iny}
%				\smallnorm{J^j(x, X_1(t, z)) - J^j(x, X_2(t, z))}_
%				{L^1(X_1^{-1}(t, \supp \olg))} \\
			&\le - C \smallnorm{\omega^0_1}_{L^\iny} h \log h,
	\end{align*}
	noting that we used $h < e^{-1}$.
	
	For $I_3$, we have
	\begin{align*}
		\pabs{I_3}
			&\le \norm{\olg(X_2(t, z)) \KK (x, X_2(t, z))}_{L^{p'}_z}
				\norm{\omega_1^0 - \omega_2^0}_
				{L^p(\supp \olg \circ X_2(t, \cdot))} \\
			&= \norm{\olg(w) \KK (x, w)}_{L^{p'}_w}
				\norm{\omega_1^0 - \omega_2^0}_
				{L^p(\supp \olg \circ X_2(t, \cdot))} \\
			&\le \norm{\olg(w) \KK (x, w)}_{L^{p'}_w}
				\norm{\omega_1^0 - \omega_2^0}_
				{L^p_{uloc}(\Omega)} \\
			&\le C_p \norm{\omega_1^0 - \omega_2^0}_
				{L^p_{uloc}(\Omega)},
	\end{align*}
	where $1/p' + 1/p = 1$. In the final inequality, we used
	\refP{Rearrangement}.
		
	Combining the bounds for $I_1$, $I_2$, and $I_3$ gives
	\begin{align*}
		\Largepabs{\int_\Omega \olg(y) &\KK^j(x, y) (\omega_1(t, y) - \omega_2(t, y))
				\, dy} \\
			&\le - C \smallnorm{\omega^0_1}_{L^\iny} h \log h + C_p \norm{\omega_1^0 - \omega_2^0}_{L^p_{uloc}(\Omega)} \\
			&= C \smallnorm{\omega^0_1}_{L^\iny} \mu(h) + C_p \norm{\omega_1^0 - \omega_2^0}_{L^p_{uloc}(\Omega)}.
	\end{align*}
	
	For $h \ge e^{-1}$, we apply, as above, \refP{Rearrangement} to
	conclude that
	\begin{align*}
		\Largepabs{\int_\Omega \olg(y) &\KK^j(x, y) (\omega_1(t, y) - \omega_2(t, y))
				\, dy}
			\le C \max \set{\smallnorm{\omega^0_1}_{L^\iny},
				\smallnorm{\omega^0_2}_{L^\iny}} \\
			&= C \mu(e^{-1}) \max \set{\smallnorm{\omega^0_1}_{L^\iny},
				\smallnorm{\omega^0_2}_{L^\iny}}
			\le C \mu(h) \max \set{\smallnorm{\omega^0_1}_{L^\iny},
				\smallnorm{\omega^0_2}_{L^\iny}},
	\end{align*}
	and the proof is complete.
\end{proof}

%
%------------------------------------------
%
\subsection{Continuous dependence on initial data}\label{S:ContinuousDependence}

\noindent
In this subsection, we modify slightly the proof of uniqueness in the previous section to obtain the limited continuity on initial data stated in \refT{ContDep}.

\begin{proof}[{Proof of \refT{ContDep}}] We follow the same steps as in the proof of uniqueness in \refS{Uniqueness}, and use the same definitions made in that proof. Now, however, $u_1$ and $u_2$ are the unique solutions for \textit{different} initial data. This leads to the bound,
\begin{align*}
	\pabs{u_1(t, &X_1(t, x)) - u_2(t, X_2(t, x))} \\
		&\le \abs{u_1^0(x) - u_2^0(x)} + A_1 + A_2
		\leq \abs{u_1^0(x) - u_2^0(x)} + A_1 + B_1 + B_2,
%		&\qquad
%			+ \abs{\int_\Omega a(X_1(t, x) - y) \KK(X_1(t, x), y)
%				(\omega^1(t, y) -  \omega^2(t, y)) \, dy} \\
%		&\qquad
%			+ \int_0^t \int_\Omega
%				\abs{\grad_y \grad_y \pr{(1 - a(X_1(s, x) - y))
%				\KK(X_1(s, x), y)}} \\
%		&\qquad\qquad\qquad\qquad\qquad
%				\abs{u_1 \otimes u_1 - u_2 \otimes u_2}(s, y)
%					\, dy \, ds.
\end{align*}
where $A_1$, $A_2$, $B_1$, and $B_2$ are the same as in \refS{Uniqueness}.

We bound $A_1$ and $B_2$ exactly as in \refEAnd{A1Bound}{B2Bound}, for the initial data was not used in their derivations. As in the proof of uniqueness, we bound the term $B_1$ using \refP{olgKBound}, but now an additional term,
\begin{align*}
	C_p \smallnorm{\omega_1^0 - \omega_2^0}_{L^p_{loc}(\Omega)},
\end{align*}
appears because the initial vorticities differ.

The net effect is that the bound in \refE{PBound} becomes
\begin{align}\label{e:NetEffectM}
	\begin{split}
	M'(s) &\le
		\norm{u_1^0 - u_2^0}_{L^\iny}
			+ C_p \smallnorm{\omega_1^0
					- \omega_2^0}_{L^p_{loc}(\Omega)} \\
		&\qquad\qquad
			+ C(1 + t)  \mu(M(s)) + C M(s) \\
		&\le C s_0 + C(1 + t)  \mu(M(s)) + C M(s) \\
		&= C s_0 + \nu(M(s)),
	\end{split}
\end{align}
where $\nu$ is as in \refE{nu}. In integral form, this is
\begin{align*}
	M(t) \le C s_0 t + \int_0^t \nu(M(s)) \, ds,
\end{align*}
since still $M(0) = 0$.

\refC{OsgoodCor} tells us that $M(t) \le \Gamma(t)$, where $\Gamma(t)$ is defined by
\begin{align*}
	\int_{C s_0 t}^{\Gamma(t)} \frac{ds}{\nu(s)} = t.
\end{align*}
It follows from \refE{NetEffectM} that
\begin{align*}
	P(t)
		= M'(t)
		\le C s_0 + C(1 + t) \mu(\Gamma(t))
			+ C \int_0^t P(s) \, ds,
\end{align*}
so by Gronwall's lemma we conclude that
\begin{align*}
	P(t) \le C \brac{s_0 + (1 + t) \mu(\Gamma(t))} e^{Ct}.
\end{align*}

Since $P(t) = \norm{\prt_t (X_2 - X_1)}_{L^\iny}$, we have
\begin{align*}
	\abs{X_2 - X_1}(t, x)
		&= \abs{\int_0^t \prt_s (X_2 - X_1)(s, x)}
		\le \int_0^t \abs{\prt_s (X_2 - X_1)(s, x)} \\
		&\le \int_0^t P(s) \, ds
		= M(t)
		\le \Gamma(t).
\end{align*}
So, one obtains continuous dependence of the flow maps with respect to initial data.

We can turn this into continuous dependence of velocity, as
\begin{align}\label{e:u1u2Bound}
	\begin{split}
	&\norm{u_1(t) - u_2(t)}_{L^\iny}
		= \norm{u_1(t, X_1(t, \cdot))
			- u_2(t, X_1(t, \cdot))}_{L^\iny} \\
		&\qquad
		\le \norm{u_1(t, X_1(t, \cdot))
			- u_2(t, X_2(t, \cdot))}_{L^\iny} \\
		&\qquad\qquad\qquad
			+ \norm{u_2(t, X_2(t, \cdot))
				- u_2(t, X_1(t, \cdot))}_{L^\iny} \\
		&\qquad
		\le P(t) + \norm{\mu(\abs{X_2(t, \cdot)
			- X_1(t, \cdot)})}_{L^\iny} \\
		&\qquad
		\le C\brac{s_0 + (1 + t)  \mu(\Gamma(t))} e^{Ct} + \mu(\Gamma(t)).
	\end{split}
\end{align}
% Since $\Gamma(t) \to 0$ as $s_0 = C \norm{u_1^0 - u_2^0}_S \to 0$, this gives the desired continuity.

To be explicit, for a fixed $t$ and all sufficiently small $s_0$, we will have $\nu(s) = C[-(1 + t) s \log s + s].$ Calculating, we have
\begin{align*}
	t
		&= \int_{C s_0 t}^{\Gamma(t)} \frac{ds}{\nu(s)}
		= -C \int_{C s_0 t}^{\Gamma(t)}
			\frac{ds}{s((1 + t) \log s - 1)} \\
		&= - C \int_{\log(C s_0 t)}^{\log \Gamma(t)}
			\frac{dr}{(1 + t) r - 1} \\
		&= - \frac{C}{1 + t} \brac{\log ((1 + t)
			\log \Gamma(t) - 1)
				- \log ((1 + t) \log(C s_0 t) - 1)} \\
		&= \frac{C}{1 + t}\log \frac{(1 + t)
			\log(C s_0 t) - 1}{(1 + t) \log \Gamma(t) - 1}.
\end{align*}

Simplifying yields the following equation:
\begin{align*}
	\frac{(1 + t) \log \Gamma(t) - 1}{(1 + t)
		\log(C s_0 t) - 1} = e^{-C t(1 + t)},
\end{align*}
which leads to
\begin{align*}
	\log \Gamma(t)
		&= \frac{1}{1 + t} + e^{-Ct(1 + t)}
			\brac{\log(C s_0 t) - \frac{1}{1 + t}} \\
		&= C_t + e^{-Ct(1 + t)} \log(C s_0 t),
\end{align*}
where
\begin{align*}
	C_t = \frac{1 - e^{-Ct(1 + t)}}{1 + t},
\end{align*}
which we note is greater than $0$.
Thus,
\begin{align*}
	\Gamma(t)
		= e^{C_t} (C s_0 t)^{e^{-Ct(1 + t)}}.
\end{align*}

The following then holds:
\begin{align*}
	\mu(\Gamma(t))
		&= - C \Gamma(t) \log \Gamma(t) \\
		&= - e^{C_t} (C s_0 t)^{e^{-Ct(1 + t)}}
			\brac{C_t + e^{-Ct(1 + t)} \log(C s_0 t)}.
\end{align*}
Hence, from \refE{u1u2Bound},
\begin{align*}
	&\norm{u_1(t) - u_2(t)}_{L^\iny}
		\le C e^{Ct} s_0 + \mu(\Gamma(t)) \brac{1 + C(1 + t) e^{Ct}} \\
		&\qquad
		= C e^{Ct} s_0 - e^{C_t} (C s_0 t)^{e^{-Ct(1 + t)}}
			\brac{C_t + e^{-Ct(1 + t)} \log(C s_0 t)}
			\brac{1 + C(1 + t) e^{Ct}} \\
		&\qquad
		\le C e^{Ct} s_0
			- C(1 + t) e^{Ct} (C s_0 t)^{e^{-Ct(1 + t)}} \log(C s_0 t),
\end{align*}
which is \refE{u1u2BoundExplicit}. The final inequality was obtained by keeping only the dominant terms.
\end{proof}

The following is Osgood's lemma, as it appears in Lemma 5.2.1 of \cite{C1996}.
\begin{lemma}[Osgood's lemma]\label{L:Osgood}
    Let $L$ be a measurable nonnegative function and $\gamma$ a nonnegative
    locally integrable function, each defined on the interval $[t_0,
    t_1]$. Let $\mu \colon [0, \iny) \to [0, \iny)$ be a
    continuous nondecreasing
    function, with $\mu(0) = 0$ (hence, $\mu$ is a \MOC)
    and $\mu > 0$ on $(0, \iny)$.
    Let $a \ge 0$, and assume that for
    all $t$ in $[t_0, t_1]$,
    \begin{align}\label{e:LInequality}
        L(t) \le a + \int_{t_0}^t \gamma(s) \mu(L(s)) \, ds.
    \end{align}
    If $a > 0$, then for all $t$ in $[t_0, t_1]$,
    \[
        \int_a^{L(t)} \frac{ds}{\mu(s)}
            \le \int_{t_0}^t \gamma(s) \, ds.
    \]
    If $a = 0$ and $\mu$ is an \textit{Osgood} \MOC, meaning that
    \begin{align*}
		\int_0^1 \frac{ds}{\mu(s)} = \iny,
	\end{align*}
	then $L \equiv 0$.
\end{lemma}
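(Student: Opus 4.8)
The plan is to convert the integral inequality \refE{LInequality} into a differential inequality for the majorant
\[
    R(t) := a + \int_{t_0}^t \gamma(s)\, \mu(L(s)) \, ds ,
\]
and then to integrate the latter. Since $L$ is measurable and $\mu$ continuous, $\mu\circ L$ is measurable, and in every use of this lemma the integrand $\gamma\,\mu(L)$ is bounded on $[t_0,t_1]$ — in particular it lies in $L^1([t_0,t_1])$ — so $R$ is absolutely continuous with $R'(t) = \gamma(t)\mu(L(t))$ for a.e.\ $t$, $R(t_0)=a$, and $R$ nondecreasing because $\gamma,\mu\ge 0$. By \refE{LInequality}, $L(t)\le R(t)$ for all $t$, and since $\mu$ is nondecreasing this yields the a.e.\ pointwise inequality $R'(t)\le\gamma(t)\,\mu(R(t))$.

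First I would treat the case $a>0$. Here $R(t)\ge a>0$, so $\mu(R(t))>0$ and $R'(t)/\mu(R(t))\le\gamma(t)$ a.e. Introduce $\mathcal{M}(r):=\int_a^r ds/\mu(s)$; because $1/\mu$ is continuous and positive, $\mathcal{M}$ is $C^1$ and strictly increasing on $[a,\iny)$, hence Lipschitz on the compact range of $R$, so $t\mapsto\mathcal{M}(R(t))$ is absolutely continuous with a.e.\ derivative $\mathcal{M}'(R(t))R'(t)=R'(t)/\mu(R(t))\le\gamma(t)$. Integrating from $t_0$ to $t$ and using $\mathcal{M}(R(t_0))=\mathcal{M}(a)=0$ gives $\mathcal{M}(R(t))\le\int_{t_0}^t\gamma(s)\,ds$. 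If $L(t)\ge a$, monotonicity of $\mathcal{M}$ together with $L(t)\le R(t)$ gives $\int_a^{L(t)}ds/\mu(s)\le\mathcal{M}(R(t))\le\int_{t_0}^t\gamma$; if $L(t)<a$ the left side is $\le 0\le\int_{t_0}^t\gamma$. Either way the asserted inequality holds.

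For the case $a=0$ with $\mu$ Osgood I would run a perturbation argument. For each $\delta>0$, inequality \refE{LInequality} holds a fortiori with $\delta$ replacing $a$, so the case already proved yields $\int_\delta^{L(t)}ds/\mu(s)\le\int_{t_0}^t\gamma(s)\,ds$ for all $t\in[t_0,t_1]$. Suppose, for contradiction, that $L(t^*)>0$ for some $t^*$. One first observes that $\int_0^1 ds/\mu(s)=\iny$ forces $\int_0^c ds/\mu(s)=\iny$ for every $c>0$ (for $c\le 1$ since $\int_c^1 ds/\mu(s)<\iny$, $1/\mu$ being bounded on $[c,1]$; for $c>1$ a fortiori). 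Hence, letting $\delta\downarrow 0$, $\int_\delta^{L(t^*)}ds/\mu(s)\to\iny$, contradicting the fixed finite bound $\int_{t_0}^{t^*}\gamma(s)\,ds$. Therefore $L\equiv 0$.

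The main obstacle here is purely the measure-theoretic bookkeeping rather than any genuine estimate: one must ensure $R$ is truly absolutely continuous (i.e.\ $\gamma\,\mu(L)\in L^1_{loc}$, which is where boundedness of the quantities in our applications enters), and one must justify the chain rule $\frac{d}{dt}\mathcal{M}(R(t))=\mathcal{M}'(R(t))\,R'(t)$ a.e.\ for the $C^1$ function $\mathcal{M}$ composed with the absolutely continuous function $R$. Granting those standard facts, the remainder is the classical comparison principle for scalar differential inequalities.
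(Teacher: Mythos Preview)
Your proof is correct and is the standard argument for Osgood's lemma. Note, however, that the paper does not prove this lemma at all: it simply states it and attributes it to Lemma~5.2.1 of \cite{C1996}. So there is no ``paper's own proof'' to compare against; you have supplied what the paper chose to cite.
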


\begin{cor}\label{C:OsgoodCor}
	Let $L$, $a$, $\gamma$, and $\mu$ be as in \refL{Osgood} with
	$[t_0, t_1] = [0, T]$ for some $T > 0$ and with $a > 0$.
	Define
	$\Gamma \colon [0, T] \to [0, \iny]$ by
    \begin{align*}
        \int_{at}^{\Gamma(t)} \frac{ds}{\mu(s)}
            = \int_0^t \gamma(s) \, ds.	
	\end{align*}
	Assume that
    \begin{align}\label{e:LCorInequality}
        L(t) \le a t + \int_0^t \gamma(s) \mu(L(s)) \, ds
    \end{align}
    for
    all $t$ in $[0, T]$. Then $L \le \Gamma$ on $[0, T]$.
\end{cor}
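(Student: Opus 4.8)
The plan is to deduce the corollary from the $a>0$ case of Osgood's lemma, \refL{Osgood}, via a ``freeze the endpoint'' device. Fix $t^\ast\in(0,T]$. Since $at\le at^\ast$ for all $t\in[0,t^\ast]$, hypothesis \refE{LCorInequality} yields
\begin{align*}
	L(t)\le at^\ast+\int_0^t\gamma(s)\,\mu(L(s))\,ds,\qquad t\in[0,t^\ast],
\end{align*}
in which $at^\ast$ is now a \emph{constant}, strictly positive because $a>0$. Hence \refL{Osgood} applies on $[0,t^\ast]$ with $at^\ast$ in place of $a$, and gives
\begin{align*}
	\int_{at^\ast}^{L(t)}\frac{ds}{\mu(s)}\le\int_0^t\gamma(s)\,ds\qquad\text{for all }t\in[0,t^\ast].
\end{align*}

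Next I take $t=t^\ast$ and compare with the relation defining $\Gamma$, namely $\int_{at^\ast}^{\Gamma(t^\ast)}ds/\mu(s)=\int_0^{t^\ast}\gamma(s)\,ds$. If $\int_0^{t^\ast}\gamma(s)\,ds\ge\int_{at^\ast}^{\iny}ds/\mu(s)$ then $\Gamma(t^\ast)=+\iny$ and the bound $L(t^\ast)\le\Gamma(t^\ast)$ is trivial; otherwise $\Gamma(t^\ast)\in[at^\ast,\iny)$ and
\begin{align*}
	\int_{at^\ast}^{L(t^\ast)}\frac{ds}{\mu(s)}\le\int_{at^\ast}^{\Gamma(t^\ast)}\frac{ds}{\mu(s)}.
\end{align*}
Because $\mu>0$ on $(0,\iny)$, the function $s\mapsto\int_{at^\ast}^{s}dr/\mu(r)$ is continuous and strictly increasing on $[at^\ast,\iny)$ and is negative for $s<at^\ast$; in either case the last inequality forces $L(t^\ast)\le\Gamma(t^\ast)$. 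Since $t^\ast\in(0,T]$ was arbitrary, $L\le\Gamma$ on $(0,T]$. At $t=0$, \refE{LCorInequality} gives $0\le L(0)\le0$, and the defining relation of $\Gamma$ gives $\Gamma(0)=0$, so $L(0)=\Gamma(0)$ as well, completing the proof.

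The argument is genuinely short; the only points needing care are pure bookkeeping — treating the case $\Gamma(t^\ast)=+\iny$, and the (vacuous) subcase $L(t^\ast)<at^\ast$ when reading off monotonicity in the last step — so there is no substantial obstacle. The single idea is that on a fixed interval $[0,t^\ast]$ the affine term $at$ is dominated by the constant $at^\ast$, which is exactly the form \refL{Osgood} requires.
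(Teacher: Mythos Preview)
Your proof is correct and follows essentially the same approach as the paper: fix a terminal time, dominate the affine term $at$ by the constant $at^\ast$ on $[0,t^\ast]$, apply \refL{Osgood} with that constant, and evaluate at the endpoint. You are slightly more explicit about the edge cases $\Gamma(t^\ast)=+\iny$, $L(t^\ast)<at^\ast$, and $t=0$, but the underlying argument is identical.
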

\begin{proof}
	Let $t \in [0, T]$.
	We have, for all $t'$ in $[0, t]$,
	\begin{align*}
		L(t') \le a t + \int_0^{t'} \gamma(s) \mu(L(s)) \, ds.
	\end{align*}
	It follows from \refL{Osgood} that for all $t'$ in $[0, t]$,
	\begin{align*}
		\int_{at}^{L(t')} \frac{ds}{\mu(s)}
            \le \int_0^{t'} \gamma(s) \, ds.
	\end{align*}
	In particular,
	\begin{align*}
		\int_{at}^{L(t)} \frac{ds}{\mu(s)}
            \le \int_0^t \gamma(s) \, ds.
	\end{align*}
	Since $\mu > 0$ on $(0, \iny)$  and $\gamma \ge 0$
	it follows that $L(t) \le \Gamma(t)$.
\end{proof}

%
% Part
%
\PART{Part IV: Estimates for the Biot-Savart kernel}

\section{Estimates for the Biot-Savart kernel}\label{S:BSEstimates}

\noindent
In this section, we derive several delicate estimates on the Biot-Savart kernel and its derivatives, estimates that we used in the proof of existence and uniqueness in \refSThrough{ExistenceR2}{UniquenessAndCont}. We follow the basic approach of employing a conformal map developed in \cite{FLI2003, ILLF}, but must extend the methods considerably to deal with higher derivatives. Because of the use of a conformal map this approach is specific to 2D domains. (The exterior of multiply connected domains could be treated as in \cite{ILLF}, at the expense of considerable extra complexity.)

In \refPThrough{JKBounds}{hlogBound} we state the estimates we used in \refSThrough{ExistenceR2}{UniquenessAndCont} in a manner that unifies, to the extent possible, the two cases of the full plane and an exterior domain. In the three subsections that follow, we give the proofs of these estimates first for the full plane in \refS{BSKernelFullPlane}, then for the exterior of the unit disk in \refS{BSKernelExteriorDisk}, and finally for a domain exterior to an obstacle---a general smooth, connected and simply connected, bounded domain with $C^\iny$ boundary---in \refS{BSKernelExteriorSingleObstacle}.

The estimates in the full plane are the simplest, as the Biot-Savart kernel, which has an explicit form, has the greatest degree of symmetry. For the exterior of the unit disk, the Biot-Savart kernel can also be written explicitly, but the presence of the boundary induces a type of distortion that complicates the estimates considerably. It is this case that will consume most of our efforts. The exterior of an obstacle can be treated by employing a conformal map provided by the Riemann mapping theorem. Because this conformal map is well-behaved we can transfer all of the key estimates for the exterior of the unit disk to apply to the exterior of the obstacle as well.

In the statement of the propositions that follow, $\Omega$ can be either the full plane or the domain exterior to a bounded simply connected domain having $C^\iny$ boundary. We recall the definitions of $K$ in \refE{KBSFullPlane}, $\KK$ in \refE{KKDef}, and $\JJ$ in \refE{JJDef}. When $\Omega = \R^2$, we have $\KK(x, y) = \JJ(x, y) = K(x - y)$. We state the bounds on $K$ separately because they are used in deriving the more general bounds on $\KK$. (Some of the inequalities stated explicitly for $K$ are slightly stronger than those for $\KK$ specialized to $\Omega = \R^2$.)

\begin{prop}\label{P:JKBounds}
	Let $a$ be a cutoff function as in (2) of \refD{ESol},
	smooth, radially symmetric, and equal to 1 in a\
	neighborhood of the origin.
	For $\eps > 0$ set $a_\eps(\cdot) = a(\cdot/\eps)$.
	
	There exists $C > 0$ such that, for all $x \in \Omega$
	and all $\eps > 0$,
	\begin{align}
		\norm{a_\eps(x - y) K(x - y)}_{L^1_y(\R^2)}
			&\le C \eps,
					\label{e:aKBound} \\
		\norm{\grad_y \grad_y
			((1 - a_\eps(x - y)) K(x - y))}_{L^1_y(\R^2)}
				&\le C \eps^{-1},
				 	\label{e:D2KBound} \\
        \norm{a_\eps(x - y) \JJ(x, y)}_{L^1_y(\Omega)}
            &\le C \eps, \label{e:aJBound} \\
		\norm{a_\eps(x - y) \KK(x, y)}_{L^1_y(\Omega)}
			&\le C (\eps + \eps^2).
								\label{e:aKKBound}
    \end{align}

	Moreover, there exists $C_0>0$ such that for all $\eps > C_0$,
    \begin{align}
        \norm{\grad_y \grad_y ((1 - a_\eps(x - y))
            	\JJ(x, y))}_{L^1_y(\Omega)}
			&\le C \eps^{-1},
		                \label{e:D2JBound} \\
            \norm{\grad_y a_\eps(x - y) \otimes
            	\grad_y \JJ(x, y)}_{L^1_y(\Omega)}
        	&\le C \eps^{-1},
								\label{e:D1JBound} \\
		\norm{\grad_y \grad_y ((1 - a_\eps(x - y)) \KK(x, y))}_{L^1_y(\Omega)}
			&\le C.
								\label{e:D2KKBound}
	\end{align}
\end{prop}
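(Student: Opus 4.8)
The plan is to prove the full-plane bounds \refE{aKBound} and \refE{D2KBound} directly from the explicit form of $K$, and then bootstrap: the bounds \refE{aJBound}, \refE{aKKBound}, \refE{D2JBound}, \refE{D1JBound}, \refE{D2KKBound} will follow from them together with the decomposition $\JJ(x,y) = \KK(x,y) + \KKol(x)$ and the representation \refE{KK} (extended to a general obstacle via the conformal map of \refS{BSKernelExteriorSingleObstacle}). For \refE{aKBound}, since $|K(z)| = (2\pi|z|)^{-1}$ and $a_\eps$ is supported in $B_{c\eps}(0)$, we have $\|a_\eps(x-\cdot)K(x-\cdot)\|_{L^1_y} \le \|K\|_{L^1(B_{c\eps})} = \int_0^{c\eps} (2\pi r)^{-1} \cdot 2\pi r\, dr = c\eps$, a one-line computation using radial coordinates and the fact that the $|z|^{-1}$ singularity is integrable in $\R^2$. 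For \refE{D2KBound}, I would expand $\grad\grad[(1-a_\eps)K]$ by the product rule into three pieces: $(1-a_\eps)\grad\grad K$, terms with one derivative on $a_\eps$ and one on $K$, and $(\grad\grad a_\eps) K$. The first piece is supported where $|z| \gtrsim \eps$ and $|\grad\grad K(z)| \le C|z|^{-3}$, so its $L^1$ norm is $\le C\int_{c'\eps}^\infty r^{-3} r\, dr = C/\eps$; the mixed terms involve $|\grad a_\eps| \le C/\eps$ and $|\grad K| \le C|z|^{-2}$ on the annulus $c'\eps \le |z| \le c\eps$, giving $\le C\eps^{-1}\int_{c'\eps}^{c\eps} r^{-2} r\, dr = C\eps^{-1}\log(c/c')$; the last term uses $|\grad\grad a_\eps| \le C/\eps^2$ and $|K| \le C|z|^{-1}$ on the same annulus, giving $\le C\eps^{-2}\int_{c'\eps}^{c\eps} r^{-1} r\, dr = C/\eps$. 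Summing gives \refE{D2KBound}. (A scaling argument — $\grad\grad[(1-a_\eps)K](z) = \eps^{-3}(\grad\grad[(1-a)K])(z/\eps)$ by homogeneity of $K$ — reduces everything to the single case $\eps = 1$ and makes the $\eps^{-1}$ manifest.)

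For the $\JJ$ and $\KK$ bounds I would work first on $\Omega = \ol B^C$ using \refE{KK}, $\KKBC(x,y) = K(x-y) - K(x-y^*)$ with $y^* = y/|y|^2$, and $\JJBC(x,y) = \KKBC(x,y) + K(x)$. The point of adding $K(x)$ is that $\KKBC(x,y) + K(x) = K(x-y) - [K(x-y^*) - K(x)]$ and the bracketed difference decays like $|y|^{-2}$ as $|y|\to\iny$ (since $y^* \to 0$), which restores integrability of $\JJ(x,\cdot)$ near infinity — the decay that $\KK$ lacks. For \refE{aJBound}: on $\supp a_\eps(x-\cdot)$ we have $|x-y| \le c\eps$; the $K(x-y)$ part contributes $\le C\eps$ by the same computation as \refE{aKBound}, and the remaining part $K(x) - K(x-y^*)$ is smooth on this region (the singularity at $y = y^*$ occurs only on $\Gamma$, where $a_\eps$ is controlled) with size $O(1)$ on a set of measure $O(\eps^2)$, yielding the extra $C\eps^2$ — but in fact for $\JJ$ one gets the cleaner bound $C\eps$ because the reflected term, evaluated against $K(x)$, gains a factor from the mean-value theorem; this is exactly where \refE{aJBound} improves on \refE{aKKBound}. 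For \refE{aKKBound} one simply keeps the $O(\eps^2)$ term. For \refE{D2JBound}, \refE{D1JBound}, \refE{D2KKBound}: these are the far-field second-derivative bounds, where the restriction $\eps > C_0$ ensures $\supp(1-a_\eps(x-\cdot))^c$ (equivalently, the region where $a_\eps \ne 1$) is so large that the obstacle $\ol B$ and the point $x$ are both well inside it, so the derivatives of $a_\eps$ never interact with either the boundary singularity or the diagonal singularity; on this region $\grad\grad[(1-a_\eps)\JJ]$ splits as before, with the $(1-a_\eps)\grad_y\grad_y\JJ$ piece controlled by $|\grad_y\grad_y\JJ(x,y)| \le C|x-y|^{-3}$ (for $|x-y|$ moderate) plus faster decay at infinity, the mixed term by $|\grad a_\eps|\le C/\eps$ times $|\grad_y\JJ| \lesssim |x-y|^{-2}$ on an annulus of radius $\eps$, etc. The $\KK$ version \refE{D2KKBound} lacks the far-field decay, so instead of $C/\eps$ one only gets a bound of size $C$ (uniform but not decaying), which is exactly what is stated.

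The general obstacle case is handled in \refS{BSKernelExteriorSingleObstacle} by pulling back through the Riemann map $\Phi\colon \Omega \to \ol B^C$; since $\Phi$ extends smoothly and with nonvanishing derivative up to $\Gamma$ and is asymptotically linear at infinity, the kernel $\KK_\Omega(x,y)$ is a smooth, bounded perturbation of $\KKBC(\Phi(x),\Phi(y))$ composed with the (bi-Lipschitz) change of variables, so all the $L^1$ bounds transfer with the constants only changing by factors depending on $\|\Phi\|$, $\|\Phi^{-1}\|$ and their derivatives. The main obstacle I anticipate is not any single estimate but the careful bookkeeping in the far-field bounds \refE{D2JBound}--\refE{D2KKBound}: one must verify that for $\eps > C_0$ the "collar" $\{y : a_\eps(x-y) \notin \{0,1\}\}$ is genuinely disjoint from both $\Gamma$ and a neighborhood of $x$ \emph{uniformly in $x \in \Omega$} — which is false if $x$ is allowed near $\Gamma$ with $\eps$ merely $> C_0$, so one needs the geometric observation that when $x$ is near $\Gamma$ the reflected point $y^*$ (or its conformal analogue) is also near $\Gamma$ and the relevant singularities of $\grad_y\grad_y\KK$ at $y = x$ and $y = x^*$ coalesce in a controlled way; disentangling these two singularities and showing the derivative-of-cutoff terms avoid both is the delicate part, and is presumably why the constant $C_0$ (depending on the cutoff's support) is introduced rather than allowing all $\eps > 0$.
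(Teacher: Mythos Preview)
Your full-plane argument for \refE{aKBound} and \refE{D2KBound} is correct and coincides with the paper's proof, including the three-piece product-rule decomposition and the radial integrations.

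For the exterior domain, however, there are two genuine gaps. First, the bound \refE{aJBound} rests on the pointwise estimate $|\JJ(x,y)| \le C/|x-y|$ uniformly in $x,y \in \Omega$, and this is not a mean-value triviality. Writing $L(x,y) = K(x-y^*) - K(x)$, one has the exact identity $|L(x,y)| = (2\pi\,|y|\,|x|\,|x-y^*|)^{-1}$, and converting this into a bound by $C/|x-y|$ requires the geometric facts $|x-y|/|x-y^*| \le 2(1+|x-y|)$ and $|x|\,|y| \ge |x-y| - 1$ for $|x-y| \ge 2$ (these are \refLAnd{xyxystarBound}{xyBound} in the paper, proved by explicit minimization). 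Your phrase ``gains a factor from the mean-value theorem'' does not supply these inequalities, and without them the argument for \refE{aJBound} does not close.

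Second, and more seriously, you have misdiagnosed the delicate case in \refE{D2JBound}. Whether the collar $\{c'\eps < |x-y| < c\eps\}$ meets $\Gamma$ is irrelevant: the map $y \mapsto K(x-y^*)$ is smooth on all of $\ol\Omega$, since the would-be singularity $y^* = x$ forces $y = x^* \in B$, outside $\Omega$. The real difficulty is uniformity in $x$ for \emph{large} $|x|$. The chain-rule bounds read $|\grad_y\grad_y K(x-y^*)| \le C|x-y^*|^{-2}|y|^{-3} + C|x-y^*|^{-3}|y|^{-4}$, and these involve $|x-y^*|$ and $|y|$ rather than $|x-y|$; for $|x|$ large and $\eps$ fixed the support of $1 - a_\eps(x-\cdot)$ reaches all the way down to $|y| = 1$, where neither factor is small. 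The paper handles this by a case split you have not anticipated: when $\eps > C_0|x|$, the region $\{|x-y| > c'\eps\}$ forces $|y| \gtrsim \eps$ and $|x-y^*|$ bounded below, giving $C/\eps$ directly; when instead $|x| > 2$, one uses the uniform bound $|x-y^*| \ge |x| - 1 \ge |x|/2$ to get $C/|x|$, and then $\eps \le C_0|x|$ converts this to $C/\eps$. Your claim that the trouble is ``$x$ near $\Gamma$'' with singularities that ``coalesce'' is exactly backward: $x$ near $\Gamma$ is the easy case covered by $\eps > C_0$ alone, and there are no coalescing singularities in $\Omega$.
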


\begin{prop}\label{P:Rearrangement}
	Let $U \subseteq \Omega$ have measure $2 \pi R^2$ for some
	$R < \iny$.
	Then for any $p$ in $[1, 2)$,
	\begin{align}\label{e:RearrangementBounds}
			% \label{e:RearrangementBoundsFullPlane}
		\begin{split}
		\smallnorm{K(x - \cdot)}_{L^p(U)}^p
			&\le \frac{R^{2 - p}}{2 - p}, \\
			% \begin{prop}\label{P:JRearrangement}
			%	\begin{align*} % \label{e:RearrangementBound}
		\smallnorm{\KK(x, y)}_{L^p_y(U)}^p
			&\le C \frac{R^{2 - p}}{2 - p} + C R^2, \\
		\smallnorm{\JJ(x, y)}_{L^p_y(U)}^p
			&\le C \frac{R^{2 - p}}{2 - p}.
		\end{split}
	\end{align}
\end{prop}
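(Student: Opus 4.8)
All three inequalities reduce to one elementary rearrangement estimate, applied once pointwise bounds on the kernels are available.

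First I would treat the bound on $K$. Since $\abs{K(x-y)} = (2\pi\abs{x-y})^{-1}$, the function $y \mapsto \abs{K(x-y)}^p$ is, for each fixed $x$, a radially decreasing function of $x-y$. The bathtub principle (the layer-cake formula together with the fact that superlevel sets of a radial decreasing function are balls) then gives, for any measurable $U$ with $\abs{U} = 2\pi R^2$,
\begin{align*}
	\norm{K(x-\cdot)}_{L^p(U)}^p
		\le \int_{B_\rho(x)} \abs{K(x-y)}^p\,dy,
		\qquad \pi\rho^2 = 2\pi R^2,
\end{align*}
where $B_\rho(x)$ is the equimeasurable ball centered at $x$. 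A direct polar-coordinate computation evaluates the right side and produces, for $1 \le p < 2$, the stated constant $\frac{R^{2-p}}{2-p}$ (the dimensional factor being absorbed using $p\ge 1$). The same computation applied to $y\mapsto\abs{x-y}^{-p}$ gives $\int_U\abs{x-y}^{-p}\,dy \le C\frac{R^{2-p}}{2-p}$, which is the form needed for the other two kernels.

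Next, for $\KK$ and $\JJ$ I would invoke pointwise kernel bounds coming out of the analysis of \refS{BSEstimates} (the subsections following this proposition): for the full plane $\KK(x,y)=\JJ(x,y)=K(x-y)$ and nothing is added; for the exterior of the unit disk one uses $\KKBC(x,y) = K(x-y) - K(x-y^*)$ with $y^*=y/\abs{y}^2$ to show $\abs{\KKBC(x,y)} \le C\abs{x-y}^{-1} + C$ --- the additive constant being unavoidable because $\KK$ does not decay as $\abs{y}\to\iny$, while near $\Gamma$ the reflected term is kept under control by cancellation against the direct singularity --- and $\abs{\JJBC(x,y)} = \abs{\KKBC(x,y)+K(x)} \le C\abs{x-y}^{-1}$, the non-decaying parts now cancelling; for a general obstacle these two pointwise bounds are transferred through the conformal map exactly as the $L^1$ bounds of \refP{JKBounds} will be. Granting these, the proposition is immediate: for $\JJ$,
\begin{align*}
	\norm{\JJ(x,\cdot)}_{L^p_y(U)}^p
		\le C^p\int_U\frac{dy}{\abs{x-y}^p}
		\le C\,\frac{R^{2-p}}{2-p},
\end{align*}
while for $\KK$, using $\abs{a+b}^p\le 2^{p-1}(\abs{a}^p+\abs{b}^p)$ and $\abs{U}=2\pi R^2$,
\begin{align*}
	\norm{\KK(x,\cdot)}_{L^p_y(U)}^p
		\le 2^{p-1}C^p\pr{\int_U\frac{dy}{\abs{x-y}^p} + \abs{U}}
		\le C\,\frac{R^{2-p}}{2-p} + C R^2.
\end{align*}

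The main obstacle, then, is not in this proposition at all but in the pointwise kernel bounds it leans on: the estimate $\abs{\KKBC(x,y)} \le C\abs{x-y}^{-1}+C$ near the boundary of the disk (where one must see that the direct and reflected singularities partly cancel), and then the verification that the conformal map for a general smooth obstacle distorts this harmlessly. Those are precisely the estimates developed for their own sake in \refSThrough{BSKernelFullPlane}{BSKernelExteriorSingleObstacle}; once they are in hand, the rearrangement step above is routine.
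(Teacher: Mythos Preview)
Your proposal is correct and follows essentially the same route as the paper: the rearrangement (bathtub) argument for $K$, then the pointwise bound $\abs{\JJ(x,y)}\le C\abs{x-y}^{-1}$ (this is exactly \refL{JBound}) to handle $\JJ$, and finally $\KK=\JJ-\KKol$ with $\KKol$ bounded to get the extra $CR^2$ term for $\KK$; the transfer to a general obstacle via the conformal map is likewise identical. The one cosmetic difference is that the paper obtains the $\KK$ bound by writing $\KK=\JJ-\KKol$ rather than by arguing directly that the reflected singularity near $\Gamma$ is controlled by the direct one---but that is precisely what the proof of \refL{JBound} encodes, so the content is the same.
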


\begin{prop}\label{P:hlogBound}
	Let $X_1$ and $X_2$ be measure-preserving homeomorphisms of $\Omega$. Let
	$\delta = \norm{X_1 - X_2}_{L^\iny}$ and suppose $\delta < e^{-1}$. Then,
	for any measurable subset $U \subset \Omega$, with finite measure,
	there exists $C>0$, depending only on $\Omega$ and
	the measure of $U$, such that
	\begin{align}\label{e:KX1X2Diff}
		\begin{split}
				\smallnorm{K(x - X_1(z)) - K(x - X_2(z))}_{L^1_z(U)}
					&\le - C \delta \log \delta, \\
			\smallnorm{\KK(x, X_1(z)) - \KK(x, X_2(z))}_{L^1_z(U)}
				&\le - C \delta \log \delta.
		\end{split}
	\end{align}
\end{prop}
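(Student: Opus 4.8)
The plan is to prove the bound for $\KK$; the bound for $K$ is the special case $\Omega = \R^2$ (and in fact follows from the same argument, more easily, since $K$ is translation-invariant). Fix $x \in \Omega$ and $z \in U$, and write $p_1 = X_1(z)$, $p_2 = X_2(z)$, so $|p_1 - p_2| \le \delta$. The pointwise estimate on the integrand splits into two regimes according to the size of $|x - p_1|$ relative to $\delta$. When $|x - p_1| \le 2\delta$ (so also $|x - p_2| \le 3\delta$), we do not try to use any cancellation: we bound $|\KK(x, X_1(z)) - \KK(x, X_2(z))|$ by $|\KK(x, p_1)| + |\KK(x, p_2)|$ and integrate over the set $\{z \in U : |x - X_1(z)| \le 2\delta\} \cup \{z \in U : |x - X_2(z)| \le 3\delta\}$. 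Since $X_1, X_2$ are measure-preserving, the image of this set under $X_1$ (resp.\ $X_2$) has measure $O(\delta^2)$, and by \refP{Rearrangement} (with $p = 1$, or any $p \in [1,2)$) the integral of $|\KK(x, \cdot)|$ over a set of measure $O(\delta^2)$ is $O(\delta^{2-p} \cdot (\text{const})^{p}) + O(\delta^2)$, which for $p$ close to $1$ is $O(\delta)$ — comfortably smaller than $-\delta \log \delta$ for $\delta < e^{-1}$.

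In the main regime $|x - p_1| > 2\delta$, both $p_1$ and $p_2$ stay a distance $\ge \delta$ from $x$, and we use the mean value theorem along the segment from $p_1$ to $p_2$: $|\KK(x, p_1) - \KK(x, p_2)| \le |p_1 - p_2| \sup_{\xi \in [p_1,p_2]} |\grad_y \KK(x, \xi)| \le \delta \sup_{\xi} |\grad_y \KK(x,\xi)|$. Here I need a pointwise bound $|\grad_y \KK(x, y)| \le C |x - y|^{-2}$ for $|x-y|$ bounded away from $0$, uniformly in $x$, together with the decay/regularity of $\KK$ at infinity; these are exactly the kinds of kernel estimates established in \refS{BSEstimates} in the course of proving \refP{JKBounds} (for $K$ itself this is immediate from \refE{KBSFullPlane}, since $|\grad K(z)| \le C|z|^{-2}$). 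Along the segment $|x - \xi| \ge |x - p_1| - \delta \ge \tfrac12 |x - p_1|$, so $\sup_\xi |\grad_y \KK(x,\xi)| \le C |x - p_1|^{-2}$ (with the decay at infinity handled separately, contributing a bounded amount times $|U|$). Thus the integrand is bounded by $C \delta |x - X_1(z)|^{-2}$ on this set.

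It remains to integrate $C\delta |x - X_1(z)|^{-2}$ over $\{z \in U : |x - X_1(z)| > 2\delta\}$. Changing variables $y = X_1(z)$ (measure-preserving) turns this into $C\delta \int_{X_1(U) \cap \{|x-y|>2\delta\}} |x-y|^{-2}\,dy$. Using the layer-cake / rearrangement principle as in \refP{Rearrangement}, this integral is maximized when $X_1(U)$ is the annulus $\{2\delta < |x - y| < R'\}$ with $\pi R'^2 \sim |U| + \pi(2\delta)^2$, giving $C\delta \int_{2\delta}^{R'} r^{-2} \cdot 2\pi r \, dr = 2\pi C \delta \log(R'/(2\delta)) \le -C' \delta \log \delta$ for $\delta$ small (absorbing the bounded $\log R'$ and $\log 2$ terms into the constant, which is legitimate since $-\log\delta \ge 1$). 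Combining with the $O(\delta)$ contribution from the near regime yields $\smallnorm{\KK(x, X_1(z)) - \KK(x, X_2(z))}_{L^1_z(U)} \le -C\delta\log\delta$, with $C$ depending only on $\Omega$ and $|U|$.

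The main obstacle is the kernel regularity input in the intermediate regime: one needs the uniform-in-$x$ pointwise bound $|\grad_y \KK(x,y)| \le C|x-y|^{-2}$ away from the diagonal, plus adequate decay as $|y|\to\infty$, for a general exterior domain. For the full plane and the exterior of the disk these follow from explicit formulas; for a general obstacle they come from transporting the disk estimates through the conformal map, precisely as carried out in \refS{BSKernelExteriorSingleObstacle} while proving \refP{JKBounds}. Everything else is a routine layer-cake computation of the same flavor as \refP{Rearrangement}.
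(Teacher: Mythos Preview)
Your approach is genuinely different from the paper's. The paper never splits into near and far regimes and never differentiates the kernel: it proves the pointwise interpolation inequality (\refL{KKAbsBoundFullPlane})
\[
|K(x-y_1)-K(x-y_2)|\le \frac{C\,|y_1-y_2|^{1/q}}{\min(|x-y_1|,|x-y_2|)^{2-1/p}},\qquad \tfrac1p+\tfrac1q=1,
\]
integrates in $z$, applies \refP{Rearrangement} to get a bound $Cp\,\delta^{1-1/p}$, and then optimizes by choosing $p=-\log\delta$. For the exterior of the disk it writes $\KK(x,y)=K(x-y)-K(x-y^*)$, uses $|y_1^*-y_2^*|\le|y_1-y_2|$, and runs the same lemma on each piece. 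Your near/far split plus mean value theorem is more geometric and arguably more intuitive; the paper's route is purely algebraic and avoids any regularity of $y\mapsto\KK(x,y)$ beyond the explicit identity $2\pi|K(z_1)-K(z_2)|=|z_1-z_2|/(|z_1||z_2|)$.

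For the full-plane bound your argument is correct. For the $\KK$ bound there is a real gap you have not addressed: the segment $[p_1,p_2]$ need not stay in $\Omega$. Already for the exterior of the unit disk, two points just outside the circle with $|p_1-p_2|\le\delta$ can have their chord dip inside, so $\grad_y\KK(x,\cdot)$ is not a priori defined along the segment, and the pointwise bound $|\grad_y\KK(x,\xi)|\le C|x-\xi|^{-2}$ you invoke is only available for $\xi\in\Omega$. This is fixable---for the disk one can extend $K(x-\cdot^{\,*})$ into $B$ via the explicit formula and check that in the far regime $|x-p_1|>2\delta$ the chord, whose penetration depth is only $O(\delta^2)$, stays away from the reflected singularity at $x^*$; for a general obstacle one must smoothly extend the conformal map across $\Gamma$---but neither step is in your write-up, and \refS{BSKernelExteriorSingleObstacle} only supplies integrated $L^1$ bounds on $\grad_y\KK$ over $\Omega$, not the pointwise control along segments that your mean-value step needs. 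The paper's interpolation argument sidesteps this issue entirely.
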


%
% Section
%
\subsection{The Biot-Savart kernel in the full plane}\label{S:BSKernelFullPlane}

In this subsection we obtain the bounds in \refPThrough{JKBounds}{hlogBound} that apply specifically to the full plane. As we will see in \refS{BSKernelExteriorDisk}, the Biot-Savart kernel, $K$, for the full plane appears in the expressions for the Biot-Savart kernel, $\KK$, for the exterior of the unit disk. Not surprisingly, then, the estimates developed in this subsection are fundamental to the estimates in \refS{BSKernelExteriorDisk}.

\begin{proof}[\textbf{Proof of \refP{JKBounds} for the full plane}]
	We can easily prove \refE{aKBound} by integrating using polar coordinates centered at $x$:
	\begin{align*}
		\norm{a_\eps(x - y) K(x - y)}_{L^1_y(\R^2)}
			\le \frac{2 \pi}{2 \pi} \int_0^{C \eps} \frac{r \, dr}{r}
			= C \eps,
	\end{align*}
where $C$ is given in terms of the size of the support of $a$.

For \refE{D2KBound}, we need first to make several estimates. We begin by computing the first and second-order
derivatives of
\[-K^{\perp}(z) = N(z) \equiv \frac{z}{2\pi|z|^2}.\]
We have
\begin{align} \label{e:firstorderNj}
	\prt_{z_p} N^j(z) = \frac{\delta_{jp}}{2\pi|z|^2} - \frac{z_jz_p}{\pi|z|^4},
\end{align}
and
\begin{align} \label{e:secondorderNj}
	\prt_{z_m} &\prt_{z_p} N^j(z)
		=	-\frac{z_m\delta_{jp} +z_p\delta_{jm}+z_j\delta_{mp}}{\pi|z|^4} + 4\frac{z_jz_mz_p}{\pi|z|^6}.
\end{align}

It is clear, then, that there exists $C>0$ such that
\begin{align}\label{e:KjDerivEstsR2}
	\begin{split}
	\abs{\prt_{y_p} [K^j(x - y)]}
		&\le C \abs{x - y}^{-2}, \\
	\abs{\prt_{y_m}  \prt_{y_p} [K^j(x - y)]}
		&\le C \abs{x - y}^{-3}.
	\end{split}
\end{align}

We have,
\begin{align*}
	\grad_y &\grad_y ((1 - a_\eps(x - y)) K^j(x - y)) \\
		&= \grad_y \brac{((1 - a_\eps(x - y))) \grad_y K^j(x - y) - \grad_y a_\eps(x - y) K^j(x - y)} \\
		&= ((1 - a_\eps(x - y))) \grad_y \grad_y K^j(x - y)
			- 2 \grad_y a_\eps(x - y) \otimes \grad_y K^j(x - y) \\
		&\qquad
			- \grad_y \grad_y a_\eps(x - y) K^j(x - y).
\end{align*}

Suppose that $a$ is supported on $B_{c}$, the ball of radius $c > 0$ centered at the origin,
with $a \equiv 1$ on $B_{c'}$, the ball centered at the origin
with radius $c'$ saitisfying $0 < c' < c,$ and let
\begin{align}\label{e:Aeps}
	A_\eps(x) = \set{y \in \Omega \colon c' \eps < \abs{x - y} < c \eps}.
\end{align}
Then
\begin{align}\label{e:aepsBounds}
	\abs{\grad_y a_\eps(x - y)} \le C\eps^{-1} \text{ and }
		\abs{\grad_y \grad_y a_\eps(x - y)} \le C \eps^{-2},
\end{align}
with each function supported on $y$ in $A_\eps(x)$.

Continuing to estimate the term $\abs{\grad_y \grad_y ((1 - a_\eps(x - y)) K^i(x - y))}$, we write
\begin{align*}
	&\abs{\grad_y \grad_y ((1 - a_\eps(x - y)) K^i(x - y))}
		\le (f_1 + f_2 + f_3)(x, y),
\end{align*}
where
\[f_1 = f_1(x,y)= \abs{((1 - a_\eps(x - y))) \grad_y \grad_y K^j(x - y)},\]
\[f_2 = f_2(x,y)= 2 \abs{\grad_y a_\eps(x - y) \otimes \grad K^j(x - y)},\]
\[f_3 = f_3(x,y)= \abs{\grad_y \grad_y a_\eps(x - y) K^j(x - y)}.\]
Observe that $f_j \ge 0$, $j = 1, 2, 3$; $f_1(x, y)$ is supported on $\abs{x - y} \ge c'\eps$;
$f_2(x, y)$ and $f_3(x, y)$ are supported on $y \in A_\eps(x)$. Thus, combining the bounds we obtained we find
\begin{alignat*}{2}
	f_1(x, y)
		&\le \frac{C}{\abs{x - y}^3}
	&,\quad
	&f_2(x, y)
		\le \frac{C}{\eps \abs{x - y}^2}, \\
	f_3(x, y)
		&\le \frac{C}{\eps^2 \abs{x - y}}.
\end{alignat*}

The necessary estimates for $f_1$, $f_2$, $f_3$ can be easily derived:
\begin{align*}
	\norm{f_1(x, y)}_{L^1_y(\Omega)}
		&\le 2 \pi C \int_{c' \eps}^\iny \frac{r \, dr}{r^3}
		= \frac{C}{\eps}, \\
	\norm{f_2(x, y)}_{L^1_y(\Omega)}
		&\le \frac{2 \pi C}{\eps} \int_{c'\eps}^{c \eps} \frac{r \, dr}{r^2}
		= \frac{C}{\eps} \brac{\log(c \eps) - \log (c'\eps)}
		= \frac{C}{\eps}, \\
	\norm{f_3(x, y)}_{L^1_y(\Omega)}
		&\le \frac{2 \pi C}{\eps^2} \int_{c' \eps}^{c \eps} \frac{r \, dr}{r}
		= \frac{C}{\eps^2} \brac{c \eps - c'\eps}
		= \frac{C}{\eps}.
\end{align*}
Together these bounds yield \refE{D2KBound}, establishing the estimates for the full plane.
\end{proof}

\begin{proof}[\textbf{Proof of \refP{Rearrangement} for the full plane}]
	Since $|K(x - y)|$ is a strictly decreasing function of the distance from $x$, it follows that $\norm{K(x - \cdot)}_{L^p(U)}$ is maximized
	over all subsets $U\subset \R^2$ with $\abs{U} = 2 \pi R^2$ when $U = B_R(x)$, the ball of radius $R$ centered at $x$. Thus,
	\begin{align*}
		\smallnorm{K(x - \cdot)}_{L^p(U)}^p
			&\le \smallnorm{K(x - \cdot)}_{L^p(B_R(x))}^p
			= 2 \pi \int_0^R \frac{r \, dr}{(2 \pi)^p r^p} \\
			&= (2 \pi)^{1 - p}  \frac{R^{2 - p}}{2 - p},
	\end{align*}
	giving $\refE{RearrangementBounds}_1$.
\end{proof}

\refL{KKAbsBoundFullPlane} is used in our proof of \refP{hlogBound} for the full plane, below.

\begin{lemma}\label{L:KKAbsBoundFullPlane}
	For any $p, q \ge 1$ with $p^{-1} + q^{-1} = 1$,
	\begin{align*}
		\abs{K(x - y_1) - K(x - y_2)}
			\le \frac{2^{\frac{1}{p}} \abs{y_1 - y_2}^{\frac{1}{q}}}
				{2 \pi \min (\abs{x - y_1}, \abs{x - y_2})^{2 - \frac{1}{p}}}.
	\end{align*}
\end{lemma}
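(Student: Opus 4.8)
The plan is to reduce \refL{KKAbsBoundFullPlane} to the sharp pointwise identity
\begin{align*}
	\abs{K(a) - K(b)} = \frac{\abs{a - b}}{2 \pi \abs{a}\, \abs{b}},
	\qquad a, b \in \R^2 \setminus \set{0},
\end{align*}
from which the stated (weaker) inequality follows by elementary bookkeeping. To prove the identity, recall $K(z) = \frac{1}{2\pi} z^\perp \abs{z}^{-2}$, so it suffices to evaluate $\abs{\abs{b}^2 a^\perp - \abs{a}^2 b^\perp}$. Expanding the square and using $\abs{v^\perp} = \abs{v}$ together with $v^\perp \cdot w^\perp = v \cdot w$,
\begin{align*}
	\abs{\abs{b}^2 a^\perp - \abs{a}^2 b^\perp}^2
		&= \abs{b}^4 \abs{a}^2 - 2 \abs{a}^2 \abs{b}^2 (a \cdot b) + \abs{a}^4 \abs{b}^2 \\
		&= \abs{a}^2 \abs{b}^2 \pr{\abs{a}^2 - 2 a \cdot b + \abs{b}^2}
		= \abs{a}^2 \abs{b}^2 \abs{a - b}^2.
\end{align*}
Dividing by $\abs{a}^4 \abs{b}^4$ and taking square roots gives $\abs{\tfrac{a^\perp}{\abs{a}^2} - \tfrac{b^\perp}{\abs{b}^2}} = \abs{a - b}/(\abs{a}\abs{b})$, which is the identity. (Equivalently, under the identification $\R^2 \cong \BB{C}$ one has $K(z) = i/(2\pi \bar z)$, and $\abs{1/\bar a - 1/\bar b} = \abs{b - a}/(\abs{a}\abs{b})$.)

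Next I would apply this with $a = x - y_1$ and $b = x - y_2$, so $\abs{a - b} = \abs{y_1 - y_2}$; write $m = \min(\abs{x-y_1}, \abs{x-y_2})$ and $M = \max(\abs{x-y_1}, \abs{x-y_2})$. Since $p^{-1} + q^{-1} = 1$, split $\abs{y_1 - y_2} = \abs{y_1 - y_2}^{1/q} \abs{y_1 - y_2}^{1/p}$ and estimate the second factor by the triangle inequality, $\abs{y_1 - y_2} = \abs{a - b} \le \abs{a} + \abs{b} \le 2 M$, so that $\abs{y_1 - y_2}^{1/p} \le 2^{1/p} M^{1/p}$. Then
\begin{align*}
	\abs{K(x - y_1) - K(x - y_2)}
		= \frac{\abs{y_1 - y_2}}{2 \pi \abs{a}\,\abs{b}}
		\le \frac{2^{1/p}\, \abs{y_1 - y_2}^{1/q}\, M^{1/p}}{2 \pi\, m M}
		= \frac{2^{1/p}\, \abs{y_1 - y_2}^{1/q}}{2 \pi\, m\, M^{1 - 1/p}}.
\end{align*}
Because $p \ge 1$ gives $1 - 1/p \ge 0$ and $M \ge m$, we have $M^{1 - 1/p} \ge m^{1 - 1/p}$, so the right-hand side is at most $2^{1/p} \abs{y_1 - y_2}^{1/q} / (2\pi\, m^{2 - 1/p})$, which is the claimed bound.

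The only step carrying real content is the identity itself; everything after it is the inequality $\abs{y_1 - y_2} \le 2\max(\abs{x - y_1}, \abs{x - y_2})$ and the monotonicity of $t \mapsto t^{1 - 1/p}$, so I do not anticipate a genuine obstacle. As an alternative one could avoid the identity and interpolate the trivial bound $\abs{K(x-y_1) - K(x-y_2)} \le (\pi m)^{-1}$ against the Lipschitz-type bound $\abs{K(x-y_1) - K(x-y_2)} \le \abs{y_1 - y_2}/(2\pi m^2)$ (the $p = \infty$ case of the identity, itself obtainable from the derivative estimate $\abs{\grad K(z)} \le C\abs{z}^{-2}$ recorded in \refE{KjDerivEstsR2}); a short computation shows the exponents and the constant $2^{1/p}$ come out exactly. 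The direct route via the identity is cleaner, and is the one I would write up.
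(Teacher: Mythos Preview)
Your proof is correct and rests on the same key identity the paper uses, namely $2\pi\abs{K(z_1)-K(z_2)}=\abs{z_1-z_2}/(\abs{z_1}\abs{z_2})$; the paper merely states this identity (``may be verified by a direct calculation''), whereas you supply the verification. After the identity, your bookkeeping is in fact tidier than the paper's: the paper writes $\abs{y_1-y_2}^2=a^2+b^2-2ab\cos\theta$ with $a=\abs{x-y_1}$, $b=\abs{x-y_2}$ and carries out a longer algebraic manipulation before bounding by $\min(a,b)$, while you go straight to $\abs{y_1-y_2}\le 2M$ and the monotonicity of $t\mapsto t^{1-1/p}$. Both routes are elementary and equivalent in spirit; yours is the one I would keep.
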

\begin{proof}

Before we begin, we mention the following identity, which we will use:
$$2\pi|K(z_{1})-K(z_{2})|=\frac{|z_{1}-z_{2}|}{|z_{1}||z_{2}|},$$
for any $z_{1}$ and $z_{2}.$  This identity may be verified by a direct calculation.

	Now, let $a = \abs{x - y_1}$, $b = \abs{x - y_2}$, and let $\theta$ be the angle between $x - y_1$
	and $x - y_2$. Then
	\begin{align*}
		2 \pi &\abs{K(x - y_1) - K(x - y_2)}
			= \frac{\abs{y_1 - y_2}}{\abs{x - y_1} \abs{x - y_2}}
			= \frac{\abs{y_1 - y_2}^{\frac{1}{p}} \abs{y_1 - y_2}^{\frac{1}{q}}}
				{\abs{x - y_1} \abs{x - y_2}} \\
			&= \frac{(a^2 + b^2 - 2 a b \cos \theta)^{\frac{1}{2p}}}{a b}
				\abs{y_1 - y_2}^{\frac{1}{q}} \\
			&= (a^{2 - 2p} b^{-2p} + a^{-2p} b^{2 - 2p}
				- 2 a^{1 - 2p} b^{1 - 2p} \cos \theta)^{\frac{1}{2p}}
				\abs{y_1 - y_2}^{\frac{1}{q}} \\
			&\le \pr{(ab)^{-2p} (a^2 + b^2 + 2 a b)}^{\frac{1}{2p}}
				\abs{y_1 - y_2}^{\frac{1}{q}}
			= \pr{\frac{(a + b)^2}{(ab)^2 (ab)^{2p - 2}}}^{\frac{1}{2p}}
				\abs{y_1 - y_2}^{\frac{1}{q}} \\
			&= \pr{\frac{(a^{-1} + b^{-1})^2}{(ab)^{2p - 2}}}^{\frac{1}{2p}}
				\abs{y_1 - y_2}^{\frac{1}{q}}
			\le \pr{\frac{(2 \min (a, b)^{-1})^2}{(\min(a, b)^2)^{2p - 2}}}^{\frac{1}{2p}}
				\abs{y_1 - y_2}^{\frac{1}{q}} \\
			&= \pr{\frac{4}{\min (a, b)^{4p - 2}}}^{\frac{1}{2p}}
				\abs{y_1 - y_2}^{\frac{1}{q}},
	\end{align*}
	from which the result follows.
\end{proof}

\begin{proof}[\textbf{Proof of \refP{hlogBound} for the full plane}]
    Set $ A = A(z) = K(x-X_1(z)) - K(x-X_2(z))$.
    It follows from \refL{KKAbsBoundFullPlane} that, for any $p$, $q > 1$,
    with $p^{-1} + q^{-1} = 1$,
    \begin{align*}
        \|A\|_{L^1_z(U)}
            &\le C \norm{\frac{\abs{X_1(z) - X_2(z)}^{\frac{1}{q}}}
                {\min(\abs{x - X_1(z)},
                    \abs{x - X_2(z)})^{2 - \frac{1}{p}}}}_{L^1_z(U)} \\
            &\le C \delta^{\frac{1}{q}}
                \sum_{j = 1}^2
                \norm{\frac{1}{\abs{x - X_j(z)}^{2 - \frac{1}{p}}}}_{L^1_z(U)}
            = C \delta^{\frac{1}{q}}
                \sum_{j = 1}^2
                \norm{\frac{1}{\abs{x - y}^{2 - \frac{1}{p}}}}_
                    {L^1_y(X_j(U))} \\
            &= C \delta^{\frac{1}{q}}
                \sum_{j = 1}^2
                \norm{K(x - y)}_{L_y^{2 - \frac{1}{p}}(X_j(U))}^
                {2 - \frac{1}{p}}.
    \end{align*}
    Let $R > 0$ be such that $|U| = 2 \pi R^2$ and apply $\refE{RearrangementBounds}_1$
    of \refP{Rearrangement} to obtain
    \begin{align*}
        \|A\|_{L^1_z(U)}
        \le Cp R^{\frac{1}{p}} \delta^{1 - \frac{1}{p}}
        \le Cp \max\{1,R\}\delta^{1 - \frac{1}{p}}.
    \end{align*}
    Whenever $\delta < e^{-1}$, this bound is minimized, relative to $p$,
    when $p = - \log \delta$, giving
    \begin{align*}
        \|A\|_{L^1_z(U)}
        \le C\max\{1,R\}(- \log \delta) \delta^{1 + \frac{1}{\log \delta}}
        = C\max\{1,R\} e (- \log \delta) \delta,
    \end{align*}
    which is $\refE{KX1X2Diff}_1$.
\end{proof}

%
% Section
%
\subsection{The Biot-Savart kernel exterior to the unit disk}\label{S:BSKernelExteriorDisk}

\noindent In this subsection we assume that the flow domain $\Omega$ is the domain exterior to the (closed) unit disk; that is,
\begin{align*}
	\Omega = \ol{B}^C \equiv \R^2\setminus \overline{B_{1}(0)}.
\end{align*}
Let $\KKBC=\KKBC(x, y) \equiv \grad_x^\perp \GGBC(x, y)$, where $\GGBC$ is the Green's function for this domain. With $K$ as in \refE{KBSFullPlane}, the Biot-Savart kernel for all of $\R^2$, we can write $\KKBC$ as in \refE{KK}:
\begin{align}\label{e:KKBC}
	\KK(x, y)
		= \KKBC(x,y) = K(x-y) - K(x-y^\ast),
\end{align}
with $y^{\ast} = y/|y|^2$.

Our next lemma gives us some limited control over how much $K(x - y^*)$ differs from $K(x - y)$.

\begin{lemma}\label{L:xyxystarBound}
	Let $x \in \R^2$ such that $|x|>1$. Then
	\begin{align}\label{e:xyxystarUpperBound}
		\frac{\abs{x - y}}{\abs{x - y^*}}
			\le \max \set{2, 2R}
			\le 2(1 + R)
	\end{align}
	for all $y$ in $\Omega$ such that $\abs{x - y} \le R$.
	Also,
	\begin{align}\label{e:xyStarBound}
		\frac{1}{\abs{x - y^*}}
			\le 2
	\end{align}
	for all $y\in \Omega$ with $\abs{x - y} \ge 1$.
\end{lemma}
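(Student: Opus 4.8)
\emph{Proof proposal.} The plan is to reduce both inequalities to one elementary identity. Writing $y^{*}=y/|y|^{2}$ and expanding the square directly, $|x-y^{*}|^{2}=|x|^{2}-2|y|^{-2}\,x\cdot y+|y|^{-2}$; multiplying by $|y|^{2}$ and using $-2\,x\cdot y=|x-y|^{2}-|x|^{2}-|y|^{2}$ gives
\begin{align*}
	|y|^{2}\,|x-y^{*}|^{2}=(|x|^{2}-1)(|y|^{2}-1)+|x-y|^{2}.
\end{align*}
Since $|x|>1$ by hypothesis and $|y|>1$ because $y\in\Omega$, the first term on the right is nonnegative, so this already yields the clean bound $|x-y^{*}|\ge |x-y|/|y|$ (and $x\neq y^{*}$ always, since $|x|>1>|y^{*}|$, so nothing degenerates).

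To finish I would split on the size of $|y|$. If $1<|y|\le 2$, then $|x-y|/|x-y^{*}|\le|y|\le 2\le\max\{2,2R\}$, which is \refE{xyxystarUpperBound} in this range; and if in addition $|x-y|\ge 1$ the same bound gives $|x-y^{*}|\ge 1/|y|\ge 1/2$, i.e.\ \refE{xyStarBound}. If instead $|y|>2$, then $|y^{*}|=1/|y|<1/2<1<|x|$, so the reverse triangle inequality gives $|x-y^{*}|\ge|x|-|y^{*}|>1/2$; this is \refE{xyStarBound}, and when $|x-y|\le R$ it yields $|x-y|/|x-y^{*}|<2|x-y|\le 2R\le\max\{2,2R\}$, which is \refE{xyxystarUpperBound}. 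The last inequality in \refE{xyxystarUpperBound} is just $\max\{2,2R\}=2\max\{1,R\}\le 2(1+R)$.

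There is no substantial obstacle here: the only step that requires any thought is noticing the identity above, after which everything reduces to the dichotomy $|y|\le 2$ versus $|y|>2$ and two applications of the triangle inequality. (The case $x=y$ in \refE{xyxystarUpperBound} is trivial, as the left-hand ratio is then $0$.)
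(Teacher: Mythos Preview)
Your proof is correct, and it is genuinely different from the paper's. The paper places $x=(a,0)$ on the positive real axis, writes $y$ in polar form $(r,\theta)$, and studies the quotient
\[
	k(\theta)=\frac{|x-y|^{2}}{|x-y^{*}|^{2}}
		=\frac{a^{2}+r^{2}-2ar\cos\theta}{a^{2}+r^{-2}-2ar^{-1}\cos\theta}
\]
as a function of $\theta$ for fixed $a,r$; a calculus computation shows the maximum is at $\theta=\pi$, and the authors then bound $k(\pi)=\bigl(\tfrac{a+r}{a+r^{-1}}\bigr)^{2}$ by a case analysis. The second inequality \refE{xyStarBound} is handled similarly, minimizing $m(\theta)=|x-y^{*}|^{2}$ over $\theta$.

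Your route replaces all of this calculus with the single algebraic identity
\[
	|y|^{2}\,|x-y^{*}|^{2}=(|x|^{2}-1)(|y|^{2}-1)+|x-y|^{2},
\]
which immediately gives $|x-y|/|x-y^{*}|\le|y|$, and then you split on $|y|\le 2$ versus $|y|>2$. This is cleaner and more conceptual: the identity is exactly the symmetry relation underlying the Green's function of the disk (equivalently, the standard computation behind the Poisson kernel), so it explains \emph{why} the ratio is controlled rather than verifying it by optimization. The paper's approach, by contrast, is more hands-on and would perhaps adapt more readily if one wanted sharper angle-dependent information, but for the lemma as stated your argument is both shorter and more transparent.
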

\begin{proof}
	We assume
	without loss of generality that $x = (a, 0)$ lies along the positive $x$-axis. Let $y \in \Omega \cap B_R(x)$ with $\abs{y} = r$ and set $\theta$ be the angle between $y$ and $x$.
	Assume that $x \ne y$.
	Then, fixing $a$ and $r$, let
	\begin{align*}
		k(\theta)
			:= \frac{\abs{x - y}^2}{\abs{x - y^*}^2}
				= \frac{a^2 + r^2 - 2 a r \cos \theta}
					{a^2 + \frac{1}{r^2} - 2 \frac{a}{r} \cos \theta}.
	\end{align*}
	Direct calculations show that the only solutions to $k'(\theta) = 0$ are $\theta = 0$ and $\theta = \pi$, and
	that $k''(0) > 0$ while $k''(\pi) < 0$. Thus, $k$ is maximized when $\theta = \pi$.
	(The maximum may occur for $y$ on $\partial B_{1}(0)$.)
	We then write
	\begin{align*}
		k(\pi)
			&= \frac{a^2 + r^2 + 2 a r}{a^2 + r^{-2} + 2 a r^{-1}}
			= \pr{\frac{a + r}{a + r^{-1}}}^2.
	\end{align*}
	If $y$ lies along the negative real axis, then $a$ and $r$ must be less than $R$, so that
	$a + r \le 2R$. Then, since also $a + r^{-1} \ge 1$,
	we have that $k(\pi) \le 4 R^2$. If $y$ lies along the positive real axis then $r < a$,
	so $k(\pi) < (2a/a)^2 = 4$. This gives \refE{xyxystarUpperBound}.

	Similarly, letting $m(\theta) = \abs{x - y^*}^2 = a^2 + r^{-2} - 2 a r^{-1}\cos \theta$
	for fixed $a$ and $r$, we have $m'(\theta)
	= 2 a \sin(\theta)/r$ and $m''(\theta) = 2 a \cos(\theta)/r$. Thus, the minimum of $m(\theta)$ occurs
	at $\theta = 0$, where
	$
		m(0)
			= a^2 + r^{-2} - 2 a r^{-1}
			= (a - r^{-1})^2.
	$
	But if $\abs{x - y} = M \ge 1$ then $r = a + M$, so that
	\begin{align*}
		m(0)^{\frac{1}{2}}
			&= a - \frac{1}{a + M}
			\ge 1 - \frac{1}{a + 1}
			= \frac{a}{a + 1}.
	\end{align*}
	Thus,
	\begin{align*}
		\frac{1}{\abs{x - y^*}}
			\le \frac{1}{m(0)^{\frac{1}{2}}}
			= 1 + \frac{1}{a}
			\le 2,
	\end{align*}
	since $\abs{a} \ge 1$. This is \refE{xyStarBound}.
\end{proof}

\begin{lemma}\label{L:xyBound}
	For all $R \ge 2$,
	\begin{align*}
		\inf \set{\abs{x} \abs{y} \colon x, y \in \Omega, \abs{x - y} = R}
			= R - 1.
%			= \left\{
%				\begin{array}{rl}
%					R - 1  & \text{ if } R > 2, \\
%					R + 1 & \text{ if } R < 1.
%				\end{array}
%			\right.
	\end{align*}
\end{lemma}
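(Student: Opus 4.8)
The plan is to prove the matching inequalities $\inf\bigl\{\abs{x}\abs{y}\bigr\}\ge R-1$ and $\inf\bigl\{\abs{x}\abs{y}\bigr\}\le R-1$ separately, working throughout with the radii $s=\abs{x}$ and $t=\abs{y}$, which satisfy $s,t\ge1$ because $x,y\in\Omega=\ol{B}^C$. For the lower bound, fix any admissible pair $x,y$; the triangle inequality gives $R=\abs{x-y}\le\abs{x}+\abs{y}=s+t$, so
\begin{align*}
	st-(R-1) \ge st-(s+t)+1 = (s-1)(t-1) \ge 0,
\end{align*}
whence $\abs{x}\abs{y}=st\ge R-1$; taking the infimum over all such pairs yields $\inf\bigl\{\abs{x}\abs{y}\bigr\}\ge R-1$.

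For the reverse inequality I would exhibit admissible pairs whose product tends to $R-1$. Given $\delta\in(0,1)$, set $s=1+\delta$ and $t=\max\{1+\delta,\ R-1-\delta\}$; then $s,t>1$, $s+t\ge R$, and $\abs{s-t}=\max\{0,R-2-2\delta\}\le R$, so by the law of cosines (choosing the appropriate angle between the two position vectors) there exist $x_\delta,y_\delta\in\Omega$ with $\abs{x_\delta}=s$, $\abs{y_\delta}=t$, and $\abs{x_\delta-y_\delta}=R$. Since $\abs{x_\delta}\abs{y_\delta}=st\to R-1$ as $\delta\to0^+$---the maximum equals $R-1-\delta$ when $R>2$ and $1+\delta$ when $R=2$---we conclude $\inf\bigl\{\abs{x}\abs{y}\bigr\}\le R-1$. (For $R>2$ one may simply take $x_\delta=(1+\delta,0)$, $y_\delta=(1+\delta-R,0)$; and passing to $\ol{\Omega}$, the pair $x=(1,0)$, $y=(1-R,0)$ realizes the infimum exactly.) Combining the two bounds gives the claimed equality.

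This lemma is elementary, and I expect no genuine obstacle. The only steps needing a moment's care are the algebraic reduction $st-(R-1)\ge(s-1)(t-1)$, which becomes transparent once one reparametrizes by the radii and applies the triangle inequality, and---in the upper bound---verifying realizability of the prescribed triangle with side lengths $(s,t,R)$, together with the mild bookkeeping near $R=2$, where a point cannot lie exactly on the unit circle. The hypothesis $R\ge2$ enters precisely to ensure $R-1\ge1$, so that the near-extremal configurations remain inside $\Omega$.
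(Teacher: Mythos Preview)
Your argument is correct. The lower bound via the factorization $st-(R-1)\ge st-(s+t)+1=(s-1)(t-1)\ge 0$ is clean, and your explicit near-extremal pairs show the infimum is exactly $R-1$ (not attained in the open set $\Omega$, which you handle correctly by taking a limit).

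The paper takes a different route: it invokes Lagrange multipliers to argue that an extremizer of $\abs{x}^2\abs{y}^2$ on the constraint $\abs{x-y}^2=R^2$ must have either $x,y$ linearly dependent (the interior critical-point condition) or one of $x,y$ on $\partial\Omega$; in the boundary case it then applies the triangle inequality $\abs{x}\ge\abs{x-y}-\abs{y}=R-1$, and dismisses the linearly dependent case as ``easy.'' Your approach is more elementary and more self-contained: you bypass any optimization machinery entirely, and your algebraic inequality makes the role of the constraint $\abs{x},\abs{y}>1$ completely transparent. The paper's route is marginally shorter to state but is a bit casual about the distinction between infimum and minimum (since $\Omega$ is open), whereas your limiting construction addresses this directly.
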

\begin{proof}
Begin by observing that, using Lagrange multipliers,
\[\min_{\{|x-y|^2=R^2,\,x,y\in \Omega\}}\{|x|^2|y|^2\} \]
is attained when either $x$ and $y$ are linearly dependent or when one of $x$ or $y$ is on the boundary $\partial \Omega$. In the latter
case, assuming without loss of generality that $|y|=1$, we have
\[|x||y|=|x| \geq |x-y| - |y| = R - 1,\]
as desired.
Otherwise, if $x$ and $y$ are linearly dependent then $x = \beta y$, $\beta \in \R$ and the result follows easily from $|x-y|=R$.
 \end{proof}

In the proof of existence we make use of a modified Biot-Savart kernel, the hydrodynamic Biot-Savart kernel \refE{JJDef}. In the case of the exterior of the unit disk this kernel is given by:
\begin{equation} \label{hydroBSdiskcase}
	\JJ(x, y)
		= \JJBC(x,y) \equiv \KKBC(x,y) + K(x).
\end{equation}

\begin{lemma}\label{L:JBound}
	Let
	\begin{align}\label{e:L}
		L(x, y)
			= K(x - y^*) - K(x).
	\end{align}
	There exists a constant $C>0$ such that, for all $x$, $y$ in $\ol{B}^C$,
	we have
	\begin{align}
		\abs{\JJ(x, y)}
			\le \frac{C}{\abs{x - y}},       \label{e:JJBound} \\
		\abs{L(x, y)}
			\le \frac{C}{\abs{x - y}}.       \label{e:LBound}
	\end{align}
\end{lemma}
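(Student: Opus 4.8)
The plan is to reduce both inequalities to a single explicit formula for $\abs{L(x,y)}$, exploiting the algebraic fact that
\begin{align*}
	\JJ(x,y) = K(x-y) - L(x,y),
\end{align*}
which is immediate from \refE{KKBC}, \eqref{hydroBSdiskcase} and the definition \refE{L}. Since $\abs{K(x-y)} = (2\pi\abs{x-y})^{-1}$ already has exactly the form of the desired bound, once $\refE{LBound}$ is proved, $\refE{JJBound}$ follows from the triangle inequality. So the whole task is to estimate $L$.

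To do this I would first compute $\abs{L(x,y)}$ exactly. Using the identity $2\pi\abs{K(z_1)-K(z_2)} = \abs{z_1-z_2}\big/(\abs{z_1}\abs{z_2})$ recorded in the proof of \refL{KKAbsBoundFullPlane}, applied with $z_1 = x-y^*$ and $z_2 = x$ — so that $z_1-z_2 = -y^*$ and $\abs{z_1-z_2} = \abs{y^*} = \abs{y}^{-1}$ — one gets
\begin{align*}
	\abs{L(x,y)} = \frac{1}{2\pi\,\abs{x}\,\abs{y}\,\abs{x-y^*}}.
\end{align*}
Thus $\refE{LBound}$ is equivalent to the uniform bound $\abs{x-y}\big/\big(\abs{x}\,\abs{y}\,\abs{x-y^*}\big)\le C$ for all $x,y\in\ol{B}^C$.

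The estimate of this quotient splits into two regimes. If $\abs{x-y}\le 1$, then \refL{xyxystarBound} (taking $R=1$ in \refE{xyxystarUpperBound}) gives $\abs{x-y}/\abs{x-y^*}\le 2$, and since $\abs{x},\abs{y}\ge 1$ the quotient is at most $2$. If $\abs{x-y}\ge 1$, then \refE{xyStarBound} gives $\abs{x-y^*}^{-1}\le 2$, while $\abs{x-y}\le\abs{x}+\abs{y}$ together with $\abs{x},\abs{y}\ge 1$ gives $\abs{x-y}\big/(\abs{x}\abs{y}) \le \abs{y}^{-1}+\abs{x}^{-1}\le 2$; multiplying, the quotient is at most $4$. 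In either case it is bounded by an absolute constant, so $\abs{L(x,y)}\le C/\abs{x-y}$, and then $\abs{\JJ(x,y)}\le\abs{K(x-y)}+\abs{L(x,y)}\le C'/\abs{x-y}$.

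The argument is essentially mechanical: all the ingredients — the explicit form of $\abs{K(z_1)-K(z_2)}$, and the two ratio estimates packaged in \refL{xyxystarBound} — are already in hand. The only point needing a moment's care is the case split at $\abs{x-y}=1$: \refE{xyxystarUpperBound} is the right tool in the near-diagonal regime, where $\abs{x-y^*}$ can itself be small and so cannot be bounded below away from zero, while \refE{xyStarBound} handles the complementary far regime; so the main "obstacle", such as it is, is simply pairing each bound with the range of $\abs{x-y}$ it was built for.
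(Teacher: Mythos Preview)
Your proof is correct and follows essentially the same approach as the paper: both compute $\abs{L(x,y)} = (2\pi\abs{x}\abs{y}\abs{x-y^*})^{-1}$ exactly and then bound $\abs{x-y}/(\abs{x}\abs{y}\abs{x-y^*})$ uniformly, deducing \refE{JJBound} from \refE{LBound} via $\JJ = K - L$. The one minor difference is that the paper invokes \refL{xyBound} (i.e., $\abs{x}\abs{y}\ge \abs{x-y}-1$ for $\abs{x-y}\ge 2$) together with \refE{xyxystarUpperBound}, whereas you split at $\abs{x-y}=1$ and, in the far regime, replace \refL{xyBound} by the elementary triangle-inequality bound $\abs{x-y}/(\abs{x}\abs{y})\le \abs{x}^{-1}+\abs{y}^{-1}\le 2$; this is a small but genuine simplification, since it makes \refL{xyBound} unnecessary for this lemma.
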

\begin{proof}
	We have,
	\begin{align*}
		\abs{L(x, y)}
			&= \frac{1}{2 \pi} \abs{
				\frac{x^\perp}{\abs{x}^2}
					- \frac{x^\perp - (y^*)^\perp}{\abs{x - y^*}^2}}
			= \frac{1}{2 \pi} \frac{\abs{y^*}}{\abs{x} \abs{x - y^*}} \\
			&= \frac{1}{2 \pi} \frac{1}{\abs{y} \abs{x} \abs{x - y^*}}
			\le \frac{C(1 + \abs{x - y})}{\max \set{1, \abs{x - y} - 1}  \abs{x - y}} \\
			&= \frac{C(1 + s)}{\max \set{1, s- 1}  \abs{x - y}},
	\end{align*}
	where $s = \abs{x - y}$.
	In the last inequality, we used \refLAnd{xyxystarBound}{xyBound}.
	
	Let $g(s) = (1 + s)/\max \set{1, s- 1}$. When $s \le 2$, $g(s) \le 1 + s \le 3$, and when
	$s > 2$,
	\begin{align*}
		g(s)
			= \frac{1 + s^{-1}}{1 - s^{-1}}
			< \frac{2}{\frac{1}{2}}
			= 4.
	\end{align*}
	Hence, $\abs{L(x, y)} \le C/\abs{x - y}$. But $\JJ(x, y) = K(x - y) + L(x, y)$ and
	$\abs{K(x - y)} = C/\abs{x - y}$, so the same inequality
	applies to $J$.
\end{proof}

\begin{proof}[\textbf{Proof of \refP{JKBounds}} for $\ol{B}^C$]
	Due to \refL{JBound}, \refE{aJBound} follows directly from \refE{aKBound}.
	
	To establish \refE{D2JBound}, we need only establish it with $L$ of \refE{L}
	in place of $\JJ$, for then we can add that bound to \refE{D2KBound}.

	We have,
\begin{align*}
	\prt_{y_n} &\prt_{y_j} (K^i(x - y^*))
		\equiv \prt_n \prt_j (K^i(x - y^*))
		= -\prt_n (\prt_{y^*_k} K^i(x - y^*) \prt_j y^*_k) \\
		&= -\prt_{y^*_k} K^i(x - y^*) \prt_n \prt_j y^*_k
			- \prt_n \prt_{y^*_k} K^i(x - y^*) \prt_j y^*_k \\
		&= -\prt_{y^*_k} K^i(x - y^*) \prt_n \prt_j y^*_k
			+ \prt_{y^*_m}  \prt_{y^*_k} K^i(x - y^*) \prt_n y^*_m \prt_j y^*_k.
\end{align*}
But,
\begin{align*}
	\prt_j y^*_k
		= \prt_j \frac{y_k}{\abs{y}^2}
		= - 2 \frac{y_k}{\abs{y}^3} \prt_j \abs{y} + \frac{\delta_{jk}}{\abs{y}^2}
		= - 2 \frac{y_j y_k}{\abs{y}^4} + \frac{\delta_{jk}}{\abs{y}^2},
\end{align*}
so
\begin{align*}
	\prt_n &\prt_j y^*_k
		= \prt_n \pr{- 2 \frac{y_j y_k}{\abs{y}^4} + \frac{\delta_{jk}}{\abs{y}^2}} \\
		&= 8 \frac{y_j y_k}{\abs{y}^5} \prt_n \abs{y}
			- 2 \frac{\prt_n y_j y_k}{\abs{y}^4}
			- 2 \frac{y_j \prt_n y_k}{\abs{y}^4}
			- 2 \frac{\delta_{jk}}{\abs{y}^3} \prt_n \abs{y} \\
		&= 8 \frac{y_j y_k y_n}{\abs{y}^6}
			- 2 \frac{\delta_{jn} y_k}{\abs{y}^4}
			- 2 \frac{\delta_{nk} y_j}{\abs{y}^4}
			- 2 \frac{\delta_{jk} y_n}{\abs{y}^4}.
\end{align*}

Thus,
\begin{align*}
	\abs{\prt_n \prt_j y^*_k} \le C \abs{y}^{-3}, \quad
	\abs{\prt_n y^*_m \prt_j y^*_k} \le C \abs{y}^{-4}.
\end{align*}
Hence,
\begin{align*}
	\abs{\prt_{y_n} \prt_{y_j} (K^i(x - y^*))}
		\le C \frac{\abs{\prt_{y^*_k} K^i(x - y^*)}}{\abs{y}^3}
			+ \frac{\abs{\prt_{y^*_m}  \prt_{y^*_k} K^i(x - y^*)}}{\abs{y}^4}.
\end{align*}

Clearly, from \refEAnd{firstorderNj}{secondorderNj}
we obtain that
\begin{align*}
	\abs{\prt_{y^*_k} K^i(x - y^*)}
		&\le C \abs{x - y^*}^{-2}, \\
	\abs{\prt_{y^*_m}  \prt_{y^*_k} K^i(x - y^*)}
		&\le C \abs{x - y^*}^{-3},
\end{align*}
so that
\begin{align}\label{e:KjDerivEstsOmega}
	\begin{split}
	\abs{\prt_{y_n} (K^i(x - y^*))}
		&\le \frac{C}{\abs{x - y^*}^2 \abs{y}^2}, \\
	\abs{\prt_{y_n} \prt_{y_j} (K^i(x - y^*))}
		&\le \frac{C}{\abs{x - y^*}^2 \abs{y}^3}
			+ \frac{C}{\abs{x - y^*}^3 \abs{y}^4}.
	\end{split}
\end{align}
\Ignore{ % Ignore
and hence
\begin{align}\label{e:gradygradyKBound}
	\abs{\grad_y \grad_y (K(x - y^*))}
		\le \frac{C}{\abs{x - y^*}^2 \abs{y}^3}
			+ \frac{C}{\abs{x - y^*}^3 \abs{y}^4}.
\end{align}
} % End Ignore

Then,
\begin{align*}
	\grad_y &\grad_y ((1 - a_\eps(x - y)) L^i(x, y)) \\
		&= \grad_y \brac{((1 - a_\eps(x - y))) \grad_y L^i(x, y) - \grad_y a_\eps(x - y) L^i(x, y)} \\
		&= ((1 - a_\eps(x - y))) \grad_y \grad_y K^i(x - y^*)
			- 2 \grad_y a_\eps(x - y) \otimes \grad K^i(x - y^*) \\
		&\qquad
			- \grad_y \grad_y a_\eps(x - y) L^i(x - y^*).
\end{align*}
It is only the one, final, term in which $L$ appears in place of $K$.

Thus,
\begin{align}\label{e:gradgradaKi}
	\abs{\grad_y \grad_y ((1 - a_\eps(x - y)) L^i(x, y))}
		\le (f_1 + f_2 + f_3)(x, y),
\end{align}
where
\begin{align*}
	f_1 &= f_1(x,y) = \abs{((1 - a_\eps(x - y)))
	    \grad_y \grad_y K^i(x - y^{*})}, \\
	f_2 &= f_2(x,y)= 2 \abs{\grad_y a_\eps(x - y) \otimes \grad_y K^i(x - y^{*})}, \\
	f_3 &= f_3(x,y)= \abs{\grad_y \grad_y a_\eps(x - y) L^i(x,y)}.
\end{align*}

Define $A_\eps$ as in \refE{Aeps}.
Observe, then, that $f_1$ is supported on $\abs{x - y} \ge C_1\eps$, while
$f_2(x, y)$ and $f_3(x, y)$ are supported on $y \in A_{\eps}(x)$. Because of $\refE{KjDerivEstsOmega}_2$, it is natural to decompose $f_1$ as $f_1 = f_{1, 1} + f_{1, 2}$ in such a way that
\begin{alignat*}{2}
	\abs{f_{1, 1}(x, y)}
		&\le \frac{C}{\abs{x - y^*}^2 \abs{y}^3}
	&,\quad
	&\abs{f_{1, 2}(x, y)}
		\le \frac{C}{\abs{x - y^*}^3 \abs{y}^4}.
\end{alignat*}
From \refE{aepsBounds},  \refE{LBound}, and $\refE{KjDerivEstsOmega}_1$, we obtain
\begin{alignat*}{2}
	\abs{f_2(x, y)}
		&\le \frac{C}{\eps \abs{x - y^*}^2 \abs{y}^2}
	&,\quad
	&\abs{f_3(x, y)}
		\le \frac{C}{\eps^2 \abs{x - y}}.
\end{alignat*}

Set
$F_{1, j} = \norm{f_{1, j}(x, \cdot)}_{L^1}$, $j = 1, 2$ and
$
	F_j(x)
		= \norm{f_j(x, \cdot)}_{L^1},
$
$j = 2, 3$.

The bound on $F_3$ is very simple and applies without restriction on $\eps > 0$:
\begin{align*}
	F_3(x)
		&\le \frac{C}{\eps^2} \int_{c' \eps}^{c \eps} \frac{r \, dr}{r}
		= C \frac{(c - c') \eps}{\eps^2}
		\le \frac{C}{\eps}.
\end{align*}

To bound $F_{1, 1}$, $F_{1, 2}$, and $F_2$, fix $x$ in $\Omega$ and let
\begin{align*}
	U_\eps(x) = \set{y \in \Omega \colon \abs{x - y} > \eps}.
\end{align*}
Without loss of generality assume that $c' = 1$ (see \refE{Aeps}). With this choice of $c'$, we can set $C_0 = 2$ (at the end of the proof we will reduce this to $C_0 = 1$). Our goal now is to show that \refEThrough{D2JBound}{D2KKBound} hold for all $\eps > C_0$.

Assume first that $\eps > C_0 \abs{x}$.

For all $y$ in $U_\eps(x)$, we have $\abs{y} > \eps - \abs{x} > (C_0 - 1) \abs{x} \ge C_0 - 1$ and hence $\abs{x - y^*} > \abs{x} - (C_0 - 1)^{-1} \ge 1 - (C_0 - 1)^{-1}=: \al > 0$. Thus,
\begin{align*}
	F_{1, 1}(x)
		&= \int_{U_\eps(x)} f_1(x, y) \, dy
		\le \int_{U_\eps(x)}  \frac{C}{\abs{x - y^*}^2 \abs{y}^3} \, dy
		\le \frac{C}{\al^2} \int_{U_\eps(x)}  \frac{1}{\abs{y}^3} \, dy \\
		&\le C \int_{\abs{y} > \eps - \abs{x}} \frac{1}{\abs{y}^3} \, dy
		= C \int_{\eps - \abs{x}}^\iny \frac{r \, dr}{r^3}
		= \frac{C}{\eps - \abs{x}}.
\end{align*}
But, $\abs{x} < \eps/C_0$ so $\eps - \abs{x} > \eps(1 - C_0^{-1})$. Hence,
\begin{align*}
	F_{1, 1}(x)
		\le \frac{1}{1 - C_0^{-1}} \frac{1}{\eps}
		= \frac{C}{\eps}.
\end{align*}

A similar estimate for $F_{1, 2}$ gives $F_{1, 2}(x) \le C \eps^{-2}$.
\Ignore{ % Ignore
For $F_2$, the integral over $\Omega$ is replaced by
\begin{align*}
	C \int_{c \eps - \abs{x}}^\iny \frac{r \, dr}{r^4}
		= \frac{C}{(c \eps - \abs{x})^2},
\end{align*}
which gives $F_2(x) \le C \eps^{-2} < C \eps^{-1}$ when $\eps > C_0 \abs{x}$.
} % End Ignore

To estimate $F_2$, first observe that for all $y$ in $A_\eps(x)$, $C_1 \eps < \eps - \abs{x} < \abs{y} < c \eps + \abs{x} < C_2 \eps$, where $C_1 = 1 - C_0^{-1}$ and $C_2 = c + C_0^{-1}$. (The values of $C_1$ and $C_2$ come from our assumption that $\eps > C_0 \abs{x}$.) Then, since $f_2(x, \cdot)$ is supported in $A_\eps(x) \subseteq U_\eps(x)$, and $\abs{x - y^*} > \al$ for all $y$ in $U_\eps(x)$, as we observed above, we have
\begin{align*}
	F_2(x)
		&\le \frac{C}{\eps} \int_{A_\eps(x)} \frac{dy}{\abs{y}^2}
		\le \frac{C}{\eps} \int_{C_1 \eps}^{C_2 \eps} \frac{r \, dr}{r^2}
		= C \frac{\log (C_2 \eps) - \log(C_1 \eps)}{\eps}
		= \frac{C}{\eps}.
\end{align*}
Together, these bounds give \refEAnd{D2JBound}{D1JBound} when $\eps > C_0 \abs{x}$.

Now assume that $\abs{x} > 2$ and that $\eps > 0$. Then $\abs{x - y^*} \ge \abs{x} - 1 \ge \frac{1}{2} \abs{x}$,
so we can simply estimate,
\begin{align*}
	F_{1, 1}(x)
		&\le \frac{C}{\abs{x}^2} \int_1^\iny \frac{r \, dr}{r^3}
		= \frac{C}{\abs{x}^2}
		\le \frac{C}{\abs{x}}, \\
	F_{1, 2}(x)
		&\le \frac{C}{\abs{x}^3} \int_1^\iny \frac{r \, dr}{r^4}
		= \frac{C}{\abs{x}^3}
		\le \frac{C}{\abs{x}}.
\end{align*}

For $F_2$, since $A_\eps(x)$ is contained in the annulus centered at the origin of inner radius $1$ and outer radius $c \eps + \abs{x}$, we have
\begin{align*}
	F_2(x)
		&\le \frac{C}{\eps \abs{x}^2} \int_1^{c \eps + \abs{x}}
			\frac{r \, dr}{r^2}
		= \frac{C \log(c \eps + \abs{x})}{\eps \abs{x}^2}
		\le \frac{C \log(2 \max \set{c \eps, \abs{x}})}{\eps \abs{x}^2} \\
		&\le C \max \set{\frac{\log(2 c \eps)}{\eps} \frac{1}{\abs{x}^2},
			\frac{\log(2 \abs{x})}{\abs{x}} \frac{1}{\eps \abs{x}}
			} \\
		&\le C \max \set{\frac{1}{\abs{x}^2},
			\frac{1}{\eps \abs{x}}
			}
		\le C \max \set{\frac{1}{\abs{x}},
			\frac{1}{\eps}
			}.
\end{align*}

Now if $\eps \le C_0 \abs{x}$ then $\abs{x}^{-1} \le C \eps^{-1}$, and these bounds, along with the earlier bound for $\eps > C_0 \abs{x}$, give \refEAnd{D2JBound}{D1JBound} for all $\eps > 0$ when $\abs{x} > 2$.

On the other hand, if $1 < \abs{x} < 2$ then the restriction that $\eps > C_0 \abs{x}$ is satisfied if $\eps > 2 C_0$. Relabeling $2 C_0$ to be $C_0$, this gives the stated result  for all $x$ in $\Omega$ when $\eps > C_0$.

The bounds in \refEAnd{aKKBound}{D2KKBound} now follow immediately from \refEAnd{aJBound}{D2JBound} and the observations that
	\begin{align*}
		\norm{a_\eps(x - y) K(x)}_{L^1_y(\Omega)}
			\le \frac{1}{2 \pi \abs{x}} \norm{a_\eps(x - y)}_{L^1_y(\Omega)}
			\le C\eps^2,
	\end{align*}
	since $\abs{x} \ge 1$, and
	\begin{align*}
		&\norm{\grad_y \grad_y ((1 - a_\eps(x - y)) K(x))}_{L^1_y(\Omega)} \\
			&\qquad
			= \norm{\grad_y \grad_y ((1 - a_\eps(x - y)) \otimes K(x)}_{L^1_y(\Omega)} \\
			&\qquad
			\le \frac{C}{\eps^2} \pr{\int_{\supp a_\eps} 1} \abs{K(x)}
			\le C.
	\end{align*}
\end{proof}

\begin{proof}[\textbf{Proof of \refP{Rearrangement}} for $\ol{B}^C$]
	From \refE{JJBound}, $\abs{\JJ(x, y))} \le C \abs{K(x - y)}$.
	Hence, the bound on $\JJ(x, y)$ follows from
	$\refE{RearrangementBounds}_1$, which we proved in
	\refS{BSKernelFullPlane}.
	The bound on $\KK$ follows from the bound on
	$\JJ$ combined with
	\refE{JJDef} and the boundedness of $\KKol$ on $\Omega$.
\end{proof}

\begin{proof}[\textbf{Proof of \refP{hlogBound}} for $\ol{B}^C$]
    We start by using the expression for $\KK$ in \refE{KKBC} to split
    the left-hand side of $\refE{KX1X2Diff}_2$ into two terms using
    the triangle inequality:
    \begin{align}\label{e:KKlogdeltaInitBound}
        \begin{split}
        &\smallnorm{\KK(x, X_1(z)) - \KK(x, X_2(z))}_{L^1_z(U)} \\
            &\qquad
            \le \smallnorm{K(x - X_1(z)) - K(x - X_2(z))}_{L^1_z(U)} \\
            &\qquad\qquad
                + \smallnorm{K(x - (X_1(z))^*)
                    - K(x - (X_2(z))^*)}_{L^1_z(U)} \\
            &\qquad
            \le - C \delta \log \delta
                + \smallnorm{K(x - (X_1(z))^*)
                    - K(x - (X_2(z))^*)}_{L^1_z(U)}.
        \end{split}
    \end{align}
    In the last inequality, we bounded the first of the two $L^1$ norms
    using $\refE{KX1X2Diff}_1$.

    We now bound the the remaining $L^1$ norm in \refE{KKlogdeltaInitBound}.

    We first observe that for all $x, y \in \Omega$,
    \begin{align*}
        \abs{x^* - y^*}
            &= \frac{\abs{x - y}}{\abs{x} \abs{y}}
            \le \abs{x - y},
    \end{align*}
    so
    \begin{align*}
        \norm{(X_1)^* - (X_2)^*}_{L^\iny}
            \le \norm{X_1 - X_2}_{L^\iny}
            = \delta.
    \end{align*}
    It also follows from \refL{xyxystarBound} that for all
    $x, y \in \Omega$,
    \begin{align*}
        \frac{1}{\abs{x - y^*}}
            &\le \frac{2}{\abs{x - y}} \pr{ 1 + \abs{x - y}}
            \le \frac{2}{\abs{x - y}} + 2.
    \end{align*}

    With these two observations, we now proceed as in the proof
    of \refP{hlogBound} for the full plane in \refS{BSKernelFullPlane},
    setting $A = A(z) = K(x - (X_1(z))^*) - K(x - (X_2(z))^*)$.
    It follows from \refL{KKAbsBoundFullPlane} that, for any $p$, $q > 1$,
    with $p^{-1} + q^{-1} = 1$,
    \begin{align*}
        \norm{A}_{L^1_z(U)}
            &\le \norm{\frac{\abs{(X_1(z))^* - (X_2(z))^*}^{\frac{1}{q}}}
                {\min(\abs{x - (X_1(z))^*},
                    \abs{x - (X_2(z))^*})^{2 - \frac{1}{p}}}}_{L^1_z(U)} \\
            &\le \delta^{\frac{1}{q}}
                \sum_{j = 1}^2
                \norm{\frac{1}{\abs{x - (X_j(z))^*}^{2 - \frac{1}{p}}}}_
                    {L^1_z(U)} \\
            &= \delta^{1 - \frac{1}{p}}
                \sum_{j = 1}^2
                    \int_U \frac{dz}{\abs{x - (X_j(z))^*}^
                        {2 - \frac{1}{p}}} \\
            &\le \delta^{1 - \frac{1}{p}}
                \sum_{j = 1}^2
                    \int_U 2^{2 - \frac{1}{p}} 2^{2 - \frac{1}{p}}
                        \pr{\frac{1}{\abs{x - X_j(z)}^
                        {2 - \frac{1}{p}}} + 1} \, dz \\
            &= 4^{2 - \frac{2}{p}} \delta^{1 - \frac{1}{p}}
                \sum_{j = 1}^2
                    \pr{\int_U
                        \frac{dz}{\abs{x - X_j(z)}^
                        {2 - \frac{1}{p}}} + \abs{U}}.
    \end{align*}
    Let $R > 0$ be such that $|U| = 2 \pi R^2$ and apply $\refE{RearrangementBounds}_1$
    of \refP{Rearrangement} to obtain
    \begin{align*}
        \norm{A}_{L^1_z(U)}
            &\le 4^{2 - \frac{2}{p}} p R^{\frac{1}{p}}
                \delta^{1 - \frac{1}{p}}
                + 4^{2 - \frac{2}{p}} 2 \pi R^2 \delta^{1 - \frac{1}{p}}
                        \\
            &\le C \max \set{1,R^2} \delta^{1 - \frac{1}{p}} (p + 1) \\
            &= C \delta^{1 - \frac{1}{p}} (p + 1),
    \end{align*}
    where $C$ depends only on the measure of $U$.
    For $\delta < e^{-1}$, we set $p = - \log \delta$, giving
    \begin{align*}
        \norm{A}_{L^1_z(U)}
            &\le C  \delta^{1 + \frac{1}{\log \delta}} (-\log \delta + 1) \\
            &= C e \delta (- \log \delta + 1) \\
            &\le - C \delta \log \delta.
    \end{align*}

    We have now bounded both $L^1$-norms in \refE{KKlogdeltaInitBound}
    by $- C \delta \log \delta$.
    Combining the two bounds gives $\refE{KX1X2Diff}_2$.
\end{proof}

%
% Section
%
\subsection{The Biot-Savart kernel exterior to a single obstacle}\label{S:BSKernelExteriorSingleObstacle}

\noindent In this subsection, we assume that $\Omega$ is the domain exterior to a bounded simply connected domain having $C^\iny$ boundary. Denote by $B$ the open ball of radius one centered at the origin. We assume without loss of generality that $B \subseteq \Omega^C$ (else a translation and dilation would make it so). As in \cite{ILLF, FLI2003}, we have a $C^\iny$-diffeomorphism (biholomorphishm when treated as a map from and to domains in the complex plane or Riemann sphere), $T \colon \Omega \to \R^2 \setminus \ol{B}$, that extends smoothly to the boundary; see also \cite{BellKrantz1987}.
By (2.3) of \cite{FLI2003}, $DT$ and $DT^{-1}$ are both bounded above so, as observed in \cite{ILLF}, $T$ is bi-Lipschitz.

We then have
\begin{align}\label{e:KJOmega}
	\begin{split}
	\KK(x, y)
		&= \KKBC(T(x), T(y)) DT(x), \\
	\JJ(x, y)
		&= \JJBC(T(x), T(y)) DT(x)
		= \KK(x, y)
				+ \KKol(x),
	\end{split}
\end{align}
where
\begin{align}\label{e:olK}
	\KKol(x) = K(T(x)) DT(x).
\end{align}
Then because $T$ is Lipschitz and $K$ is bounded on $\Omega$,
\begin{align}\label{e:olKBound}
	\norm{\KKol}_{L^\iny} \le C.
\end{align}

We will also need an estimate on $D^2 T$.
	
\begin{lemma}\label{L:D2TyAtInf}
	For some constant, $C_1$,
	\begin{align*}
		\abs{D^2 T(y)} \le C_1 \abs{y}^{-3}.
	\end{align*}
\end{lemma}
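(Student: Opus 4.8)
The plan is to exploit that $T$ is conformal (a biholomorphism of one exterior domain onto another) and hence has, near infinity, a convergent Laurent expansion whose nonlinear part decays; two derivatives of that part produce the desired $|y|^{-3}$ decay, while the bounded part of $\Omega$ is dispatched by compactness. First I would observe that, since $T\colon\Omega\to\R^2\setminus\ol B$ is a homeomorphism of exterior domains, it is proper and therefore, viewed on the Riemann sphere, fixes $\infty$; since $DT$ and $DT^{-1}$ are bounded, $\beta:=\lim_{z\to\infty}T(z)/z$ exists, is finite, and is nonzero. Consequently there is $R\ge1$ with $\Omega^{C}\subset\{|z|<R\}$ and a function $h$ holomorphic on the disk $\{|w|<1/R\}$ such that
\[
  T(z)=\beta z+a_{0}+\tfrac1z\,h\!\left(\tfrac1z\right),\qquad |z|>R.
\]
(This is the classical normal form of an exterior conformal map; compare the bounds on $DT$ and $DT^{-1}$ recorded in (2.3) of \cite{FLI2003}.)

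Next I would differentiate this identity twice. A direct computation gives
\[
  T''(z)=\frac{2}{z^{3}}\,h\!\left(\tfrac1z\right)+\frac{4}{z^{4}}\,h'\!\left(\tfrac1z\right)+\frac{1}{z^{5}}\,h''\!\left(\tfrac1z\right),
\]
and since $h$, $h'$, $h''$ are continuous, hence bounded, on the compact disk $\{|w|\le1/(2R)\}$, one gets $|T''(z)|\le C|z|^{-3}$ for all $|z|\ge2R$ (using $|z|\ge2R\ge2$ to absorb the higher-order terms into the $|z|^{-3}$ term). Because $T$ is holomorphic, the Cauchy--Riemann equations express every second-order partial derivative of $T$ as a fixed linear combination of $\RE T''$ and $\operatorname{Im}T''$, so $|D^{2}T(y)|\le C'|y|^{-3}$ for every $y\in\Omega$ with $|y|\ge2R$.

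It remains to treat the compact region $\Omega_{R}:=\ol\Omega\cap\{|y|\le2R\}$. Since $B\subset\Omega^{C}$ we have $|y|\ge1$ on $\Omega$, and since $T$ extends to a $C^{\infty}$ map on $\ol\Omega$, the Hessian $D^{2}T$ is continuous on the compact set $\Omega_{R}$ and hence bounded there, say $|D^{2}T|\le M$ on $\Omega_{R}$; as $|y|^{-3}\ge(2R)^{-3}$ on $\Omega_{R}$, this gives $|D^{2}T(y)|\le M\le M(2R)^{3}|y|^{-3}$ there. Taking $C_{1}=\max\{C',\,M(2R)^{3}\}$ completes the proof. The only point requiring care is the extraction of the normal form above from the stated hypotheses --- that $T$ is a biholomorphism extending smoothly to $\partial\Omega$ with $DT$, $DT^{-1}$ bounded --- but once $T(\infty)=\infty$ (properness) and $\beta\ne0$ (the derivative bounds, or equally that $T$ is a local biholomorphism at $\infty$) are in hand, this is the standard removable-singularity argument applied to $w\mapsto1/T(1/w)$ near $w=0$; I do not anticipate any genuine obstacle beyond this bookkeeping.
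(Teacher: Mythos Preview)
Your proof is correct and follows essentially the same route as the paper's: exploit the Laurent expansion of the conformal map $T$ at infinity to see that $T''(z)=O(|z|^{-3})$, translate this to the real Hessian via the Cauchy--Riemann equations, and handle the bounded part of $\ol\Omega$ by compactness and smoothness of $T$ up to the boundary. The paper simply cites \cite{FLI2003} for the normal form $T(z)=\beta z+h(z)$ with $h'(z)=O(z^{-2})$ and then invokes analyticity at $\infty$ to get $h''(z)=O(|z|^{-3})$, whereas you write out the Laurent tail explicitly and differentiate it by hand; but the substance is the same.
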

\begin{proof}
Viewing $z$ as a complex variable, it is established in (2.1, 2.2) of \cite{FLI2003} that $T(z) = \beta z + h(z)$ for some nonzero real constant, $\beta$,  and bounded function, $h$, holomorphic on $\Omega$ (as a subset of the Riemann sphere) with $h'(z) = O(z^{-2})$ as $\abs{z} \to \iny$. It follows that $h$ is analytic at the point at infinity in the Riemann sphere and that $h''(z) = O(\abs{z}^{-3})$. If $h(z) = h(x_1, x_2) = u(x_1, x_2) + i v(x_1, x_2)$ then
\begin{align*}
	T''(z)
		&= h''(z)
		= \pdxn{u}{x_1}{2} + i \pdxn{v}{x_2}{2}
		= \pdxmixed{v}{x_1}{x_2} - i \pdxmixed{u}{x_1}{x_2}.
\end{align*}
Therefore,
$
	D^2 T(y) = O(\abs{y}^{-3})
$
as $\abs{y} \to \iny$. The results follows, then, since $T$ is $C^\iny$ up to the boundary.
\end{proof}

In \refS{BSKernelExteriorDisk}, we obtained bounds on $\JJ$ and $\KK$ for $\Omega$ the exterior of a unit disk. We now show that these bounds continue to hold for an exterior domain.

\begin{proof}[\textbf{Proof of \refPThrough{JKBounds}{hlogBound} for an exterior domain.}]
	Both \refE{aJBound} and \refE{aKKBound} of \refP{JKBounds}
	for an exterior domain follow by
	making the change of variables,
	$z = T(y)$, using the boundedness of $DT$ on $\Omega$, and the fact
	that the estimates in \refP{JKBounds} for the unit disk are uniform
	in $x$.
	The proofs of \refPAnd{Rearrangement}{hlogBound} for an exterior domain
	as well as the bound on the hydrodynamic Biot-Savart kernel,
	\refE{JJBound} of \refL{JBound}, require only the boundedness of $DT$.
	Also, \refE{D2KKBound} follows from
	\refE{D2JBound} (which we establish below)
	using the same argument as used for the
	exterior of the unit disk along with the boundedness of $DT$.
	
	It remains to prove \refE{D2JBound} and \refE{D1JBound}.
	
	Now,
	\begin{align*}
		\grad_y &\grad_y ((1 - a_\eps(x - y)) \JJ^i(x, y)) \\
			&= \grad_y \brac{((1 - a_\eps(x - y))) \grad_y \JJ^i(x, y) - \grad_y a_\eps(x - y) \JJ^i(x, y)} \\
			&= \Lambda_1 + \Lambda_2 + \Lambda_3,
	\end{align*}
	where
	\begin{align*}
			\Lambda_1
				&= ((1 - a_\eps(x - y))) \grad_y \grad_y \JJ^i(x, y), \\
			\Lambda_2
				&= - 2 \grad_y a_\eps(x - y) \otimes \grad_y \JJ^i(x, y), \\
			\Lambda_3
				&= - \grad_y \grad_y a_\eps(x - y) \JJ^i(x, y).
	\end{align*}
	By virtue of \refE{aepsBounds} and \refE{JJBound} we can bound
	the $L^1$ norm of $\Lambda_3$ as we did for
	the function, $f_3$, in the proof of \refP{JKBounds} for $\ol{B}^C$
	to conclude that
	$
		\norm{\Lambda_3(x, y)}_{L^1_y(\Omega)}
			\le C \eps^{-1}.
	$
	
	To treat $\Lambda_1$ and $\Lambda_2$ we first introduce some notation
	for differentials. Since $\JJ$ is a function
	of two variables, we will write $D_2$ to mean the differential
	with respect to the second variable.
	(So far we have been following the convention common in fluid mechanics
	of writing $\grad$ in place
	of $D$, even for a $2$-tensor.)
	
	For $\Lambda_1$ and $\Lambda_2$, we calculate,
	\begin{align*}
		\grad_y \JJ^i(x, y)
			&= \grad_y (\JJBC(T(x), T(y)) DT(x)) \\
			&= D_2 \JJBC(T(x), T(y)) DT(x) DT(y), \\
		\grad_y \grad_y \JJ^i(x, y)
			% &= \grad_y(D_2 \JJBC(T(x), T(y)) DT(x) DT(y)) \\
			&= D_2^2 \JJBC(T(x), T(y)) DT(x) (DT(y))^2 \\
				&\qquad + D_2 \JJBC(T(x), T(y)) DT(x) D^2 T(y).
	\end{align*}
	
	Making the change of variables, $z = T(y)$, the annulus,
	$A_\eps$, of \refE{Aeps} becomes
	$
		B_\eps := \set{z \in \Omega \colon c'
			\eps < \abs{x - T^{-1}(z)} < c \eps}.
	$
	Because $T$ is bi-Lipschitz, it follows easily
	that $B_\eps$ is contained in the annulus, $A'_\eps$, centered at $u$ of inner radius,
	$c'/\norm{DT}_{L^\iny}$, and outer radius, $c \norm{DT^{-1}}_{L^\iny}$.
	
	Thus, the common support of $\Lambda_2$ and $\Lambda_3$ is distorted by making the
	change of variable, $z = T(y)$,
	and its center is moved, but the bounds in \refE{aepsBounds} still apply.
	This allows us to conclude that $\Lambda_2$ and the term,
	$$
		((1 - a_\eps(x - y))) D_2^2 \JJBC(T(x), T(y)) DT(x) (DT(y))^2,
	$$ are bounded in the $L^1$ norm by $C \eps^{-1}$ for all $\eps > C_0$.
	
	What remains is to bound the $L^1$ norm of
	\begin{align*}
		\Lambda_4 = ((1 - a_\eps(x - y)) D_2 \JJBC(T(x), T(y)) DT(x) D^2 T(y).
	\end{align*}
	Using $\refE{KjDerivEstsR2}_1$, $\refE{KjDerivEstsOmega}_1$,
	the bi-Lipschitzness of $T$, and \refL{D2TyAtInf},
	\begin{align*}
		\abs{\Lambda_4(x, y))}
			\le \brac{\frac{C}{\abs{x - y}^2} + \frac{C}{\abs{T(x) - T(y)^*}^2
			    \abs{y}^2}} \frac{1}{\abs{y}^3}.
	\end{align*}
	
	Now, on $U_\eps(x) := B_{c' \eps}^C \cap \Omega$, the support of
	$(1 - a_\eps(x - \cdot))$,
	$\abs{x - y}^{-2} < (c' \eps)^{-2}$, so
	\begin{align*}
		\int_{U_\eps(x)} &\frac{C}{\abs{x - y}^2 \abs{y}^3} \, dy
			\le \frac{C}{c' \eps^2} \int_{\Omega} \frac{dy}{\abs{y}^3}
			\le \frac{C}{\eps^2}
			\le \frac{C}{C_0 \eps}
			= \frac{C}{\eps}
	\end{align*}
	for all $\eps > C_0$. The integral above was finite since $\Omega$ does not include the origin.
	
	Making the change of variables, $z = T(y)$, we have
	\begin{align*}
		\frac{C}{\abs{T(x) - T(y)^*}^2 \abs{y}^5}
			= \frac{C}{\abs{T(x) - z^*}^2 \abs{T^{-1}(z)}^5}
			\le \frac{C}{\abs{T(x) - z^*}^2 \abs{z}^5},
	\end{align*}
	since $T$ is bi-Lipschitz. Note that $T(x)$ lies in $\ol{B}^C$,
	and in calculating the $L^1$-norm, $z$ is integrated
	over $\ol{B}^C$, while the change of variables has unit
	Jacobian determinant. Hence, we can bound
	the $L^1$-norm of this term just as we did $F_{1, 1}$ or $F_{1, 2}$ in the proof
	of \refP{JKBounds} for $\ol{B}^C$.
	This leads to
	\begin{align*}
		\int_{U_\eps(x)} \frac{C}{\abs{T(x) - T(y)^*}^2 \abs{y}^5} \, dy
			< \frac{C}{\eps}
	\end{align*}
	for all $\eps > C_0$, so that also,
	\begin{align*}
		\norm{\Lambda_4(x, y)}_{L^1_y(\Omega)}
			< \frac{C}{\eps}
	\end{align*}	
	for all $\eps > C_0$.
		
	Combining these bounds,
	we obtain \refEAnd{D2JBound}{D1JBound} for an exterior domain.
\end{proof}

%
% Section
%
\section{Examples of Serfati vorticities}\label{S:Examples}
\noindent
It is natural to ask which bounded vorticities in the plane, or in an exterior domain, are the curl of some bounded
velocity, or, in other words, to characterize vorticities which give rise to Serfati velocities; we call these {\em Serfati vorticities}. This turns out to be a surprisingly subtle issue, which will be addressed in \cite{BL2}. We discuss it briefly here for the sake of completeness. Let us start with some observations:

\begin{itemize}

\item
Any Yudovich velocity (velocity having bounded and integrable vorticity) is Serfati in the whole plane or exterior domain.

\item

Periodic vorticities, with integral zero on the period, are Serfati vorticities.

\item

Any linear combination of Serfati velocities is Serfati; that is, $S$ is a vector space. In particular, adding a bounded, compactly supported, function to a periodic vorticity whose integral vanishes on the period gives rise to a Serfati vorticity. 

\item
Take
\[
	u(x) = \frac{x^{\perp}}{|x|}
		\text{ on } \Omega = \set{x \in \R^2 \colon \abs{x} > 1}.
\]
Then $\omega(u)=\curl (u) (x) = \abs{x}^{-1}$, which is bounded but does not decay fast enough to belong to $L^p(\Omega)$ for any $p \le 2$. Hence $\omega$  does not decay fast enough for the Biot-Savart law (in the exterior of the unit disk) to converge. Nonetheless, $u$ is bounded with bounded vorticity and, hence, Serfati. Treated as a stationary solution to the Euler equations, the corresponding pressure
satisfies $\grad p = \wh{r}/r$ so that $p = \log r$, in accordance with \refR{Pressure}. This example also gives rise, by composition with a conformal map, to an example in the exterior of a general, smooth, connected domain conformally equivalent to the disk.

\item
To any vorticity that is the characteristic function of an infinite strip in $\R^2$ there corresponds a  Serfati velocity. Note that this vorticity does not decay at infinity.

For example, suppose the strip is $\set{(x_1, x_2) \colon 0 < x_2 < 1}$. Then the velocity $u$ can be chosen to vanish on $x_2 \ge 1$, equal $(1 - x_2) \wh{\bm{i}}$ on the strip, and equal $\wh{\bm{i}}$ below the strip. Treated as a stationary solution to the Euler equations, the gradient of the corresponding pressure is zero, so again $p$ is in accordance with \refR{Pressure}.

\item
If $\omega$ is Serfati in $\R^2$ and is supported away from $\Omega^C$, then $\omega$ corresponds to a Serfati velocity in $\Omega$. To see this, cut off the stream function for $\omega$ so that the resulting velocity field, $u$, is tangent to $\prt \Omega$; in fact, $u$ vanishes on $\prt \Omega$.

\item
Consider the strip $\mathsf{S} = \{(x_1,x_2) \in \mathbb{R}^2 \;|\;2 < x_2 < 3\}$ (of course one can consider an arbitrary strip with arbitrary inclination).
Then

\[u(x) =    \left\{
\begin{array}{rl}
	(1, 0)     &       \mbox{ if } x_2 > 3, \\

	(x_2 - 2, 0)       &      \mbox{ if } 2 < x_2 < 3, \\

	(0, 0)    &      \mbox{ if } x_2 < 2
\end{array}
\right.\]
is a Serfati velocity in the exterior of the unit disk. Indeed, it is divergence-free, tangent to the boundary of the disk, and its curl is $\omega = - \chi_{_{\mathsf{S}}}$, hence bounded and non-decaying at infinity. Similar constructions hold for arbitrary $\Omega$  as long as the strip is placed at a distance away from $\Omega^C$. This gives rise to a family of examples---just vary the size
of the strip, the constant flow outside the strip, and the linear interpolation.

\end{itemize}

On the other hand, consider the following very simple solution to the Euler equations:
\begin{align*}
	u(t, x) = (t, 0), \quad p(t, x) = -x_1
\end{align*}
for all $(t, x)$ in $\R \times \R^2$. (This is a special case of an example in \cite{JunKato2003}.)
Then $u$ lies in $C([0, T]; S)$ for all $T > 0$ and $\grad p = (-1, 0)$ lies in $L^\iny(\R \times \R^2)$. Nonetheless, $u$ is not a weak solution as defined in \refD{ESol}. To see this, observe first that $u^0 = 0$ and the vorticity, $\omega$, of $u$ vanishes on $\R \times \R^2$. This leaves only the term,
\begin{align*}
	\int_0^t &\int_\Omega \pr{(s, 0) \cdot \grad_y} \grad_y^\perp
								\brac{(1 - a(x - y)) \KK^j(x, y)}
						\cdot (s, 0) \, dy \, ds \\
	&=
		\int_0^t (-s^2) \int_\Omega \prt_{y_1} \prt_{y_2}
								\brac{(1 - a(x - y)) \KK^j(x, y)}
								\, dy \, ds
	= 0,
\end{align*}
on the right-hand side of \refE{SerfatiId}.
Hence, for $j = 1$, the right-hand side of \refE{SerfatiId} is zero while the left-hand side is $t$, so the Serfati identity is not satisfied. Hence, requiring that the Serfati identity hold selects certain  solutions to the Euler equations whose velocity lies in the Serfati space.

We observe also that while $\grad p$ is bounded, $p$ is not sublinear. The pressure does not satisfy property (2.20) of \cite{TaniuchiEtAl2010} that is imposed to ensure uniqueness of solutions to the Euler equations for Serfati velocities in the whole plane. (This example is discussed further in \cite{K2013}, where it is shown that sublinear growth of the pressure is equivalent to the Serfati identity and that, specifically in the full plane, these two equivalent conditions reflect the solution being expressed in an inertial reference frame.)

Finally, it is proved in \cite{BL2} that vorticities that are identically (nonzero) constants {\it are not} Serfati, since the associated velocities grow linearly at infinity. Any vortex patch whose support contains disks of arbitrary radius is also not Serfati. The vortex patch consisting of a semi-infinite strip such as, for example, the characteristic function of the set $\{(0,\infty) \times (0,1\}) \subset \mathbb{R}^2$ is not a Serfati vorticity.  As it turns out, the Serfati condition, bounded vorticity with bounded velocity, is not particularly natural from a mathematical point-of-view. A result which applies to bounded vorticities, but allows for velocities
growing linearly at infinity would be desirable, but our argument does not extend in this way.

%
% Section
%
\section{Comparison with other approaches in the full plane}\label{S:ConcludingRemarks}

\noindent Working \textit{exclusively in the full plane} there are three other existence proofs \cite{Serfati1995A, Taniuchi2004, ChenMiaoZheng2013} and two other uniqueness proofs \cite{Serfati1995A, TaniuchiEtAl2010} in the literature, and a closely related vanishing viscosity argument \cite{Cozzi2009, Cozzi2010, Cozzi2013}. The approach in \cite{Serfati1995A}, which is the closest to our own, we have discussed already, but here we say a few words concerning \cite{Taniuchi2004, TaniuchiEtAl2010, Cozzi2009, Cozzi2010, Cozzi2013, ChenMiaoZheng2013} and some related open problems.

In \cite{Taniuchi2004}, Taniuchi establishes existence of solutions for initial velocity in $S$. Actually, he does so for slightly more general initial velocity in which the vorticity can be ``slightly unbounded,'' a local version, with nondecaying initial data, of Yudovich's space defined in \cite{Y1995}, but we will discuss his argument, and that of \cite{TaniuchiEtAl2010}, only as it relates to initial data in $S$. Taniuchi employs a sequence of smooth solutions with velocities in $S$ proven to exist in another 1995 paper of Serfati \cite{Serfati1995B}. Key to Taniuchi's argument is the identity for these smooth solutions from \cite{Serfati1995B},
\begin{align}\label{e:Serfatigradp}
	\grad p
		&= \frac{1}{2 \pi} (\grad (a \log \abs{\cdot})) * \prt_i \prt_j u^i u^j
			+ \frac{1}{2 \pi} \pr{\prt_i \prt_j \grad (1 - a) \log \abs{\cdot}} * u^i u^j,
\end{align}
where $a$ lies in $C_c^\iny(\R^2)$ with $a = 1$ near the origin.

\newcommand{\LP}{\ensuremath{\Cal{P}}} %

Taniuchi uses \refE{Serfatigradp} to prove that his smooth solutions are mild solutions to the Euler equations in the sense that
\begin{align}\label{e:uMild}
	u(t)
		&= u(s) - \int_s^t \LP (u \cdot \grad u)(\tau) \, d \tau,
\end{align}
where $\LP$ is the Helmholtz operator on $\R^2$, defined in terms of Riesz transforms. Making a Littlewood-Paley decomposition, Taniuchi uses \refE{uMild} in somewhat the same way that we, following Serfati, use \refE{SerfatiId} to obtain a uniform bound on the $L^\iny$-norm of the approximating velocities.
Taniuchi
% actually
establishes a  uniform-in-viscosity bound using the vorticity equation for the Navier-Stokes equations, including the case of zero viscosity, to show that $\omega(t)$ remains bounded in $L^\iny$. (We use the transport of vorticity by the flow map for the approximate solutions to show this.) Using these uniform bounds, he ultimately obtains convergence of a subsequence to a solution of the Euler equations having sublinear growth of the pressure at infinity. This solution, however, is not shown to satisfy the Serfati identity, \refE{SerfatiId} (it is shown in \cite{K2013}, however, that it does.)

In \cite{TaniuchiEtAl2010}, the authors establish a type of continuous dependence on initial data (including uniqueness as an important special case) for solutions to the Euler equations lying in $S$. They start with the solutions to the Euler equations constructed in \cite{Taniuchi2004}, first showing that the pressure satisfies
\begin{align*}
	p = \sum_{i, j = 1}^2 R_j R_k (u^j u^k),
\end{align*}
with $p$ lying in BMO, where the $R_j$ are Riesz transforms. (That this might be the key to uniqueness is suggested by the result of  \cite{GKKM2001, JunKato2003} on uniqueness of unbounded solutions to the Navier-Stokes equations.) This identity, along with the estimates established in \cite{Taniuchi2004}, is sufficient for the authors to apply an adaptation of the fundamental uniqueness argument of Vishik in \cite{VishikBesov} to prove uniqueness (and continuous dependence on initial data) assuming that pressure grows sublinearly at infinity.

Vishik's uniqueness argument, like ours or Serfati's, does not employ an energy argument. Vishik employs in a critical way the $B^0_{\iny, 1}$-norm (and ultimately a borderline Besov space norm he defines) of the difference, $w$, between velocities. We, on the other hand, employ the $L^\iny$-norm of the flow map associated with $w$ (and so also $w$ itself). Since the $B^0_{\iny, 1}$-norm of $w$ is defined in terms of the $L^\iny$-norms of the Littlewood-Paley operators applied to $w$, these are perhaps not so far apart in spirit, though the proofs are radically different.

Properties of the flow map are used in \cite{TaniuchiEtAl2010} only for a smoothed version of the velocity field (suppressing high frequencies using a Littlewood-Paley operator)
%,  which is Lipschitz continuous,
and no vorticity is assumed to be transported by the flow. This brings up the question of whether it is possible to establish the existence of a flow map for the solutions constructed in \cite{Serfati1995A, TaniuchiEtAl2010}. That this is so is proven indirectly in \cite{K2013}, by showing that the solutions we constructed in \refS{ExistenceR2} have sublinear growth of the pressure. Since Taniuchi's uniqueness proof only relies upon this fact, Taniuchi's solutions are the same as our own, which were constructed so that the vorticity is transported by the flow map.

The recent paper \cite{ChenMiaoZheng2013} also works in larger spaces than $S$ (ones much like those of \cite{Taniuchi2004, TaniuchiEtAl2010}) and employs paradifferential calculus. Like \cite{Taniuchi2004}, their existence argument uses the smooth non-decaying solutions constructed by Serfati in \cite{Serfati1995B}, though the proof differs from that of \cite{Taniuchi2004}. Also like \cite{TaniuchiEtAl2010} (of which the authors appear unaware) their proof of uniqueness is based on Vishik's uniqueness argument. The statement of uniqueness in \cite{ChenMiaoZheng2013} is missing, however, a condition, such as sublinear growth of the pressure, that is required for uniqueness to hold. (Vishik's argument suffices in the setting of \cite{VishikBesov} because the velocity decays at infinity, which enforces sublinear pressure growth.)

There is, in effect, another proof of uniqueness for Serfati initial velocity in \cite{Cozzi2009}, where the short-time vanishing viscosity limit of solutions to the Navier-Stokes equations to a solution to the Euler equations is proved. (The short-time result in \cite{Cozzi2009} is improved to arbitrarily large finite time in \cite{Cozzi2010}, but with the additional assumption that the initial velocity is in $L^2$. This last assumption is subsequently dropped in \cite{Cozzi2013}.) The uniqueness of the solutions to the Euler equations then follow since the solutions to the Navier-Stokes equations in this setting were shown to be unique in \cite{GKM1999}. Cozzi's approach departs significantly both from our approach and that of Vishik's as employed in \cite{TaniuchiEtAl2010}. Letting $w$ be the difference between the Navier-Stokes and Euler solutions, she uses the mild formulation of the solutions to control the low frequencies of $w$,  the boundedness of vorticity to control the high frequencies, and controls the middle frequencies by reducing the problem to proving the vanishing viscosity limit in the  homogenous Besov space, $\dot{B}^0_{\iny, \iny}$
% $\accentset{\circ}{B}^0_{\infty,\infty}$.
It is easier to obtain the vanishing viscosity limit in this space because
% $L^{\infty}$ embeds into $\Bdot^0_{\infty,\infty}$ and
Calderon-Zygmund operators are bounded on
$\dot{B}^0_{\iny, \iny}$
% $\accentset{\circ}{B}^0_{\infty,\infty}$
but not on $L^{\infty}$.

We stress that none of the approaches to existence or uniqueness in \cite{VishikBesov, Taniuchi2004, TaniuchiEtAl2010, ChenMiaoZheng2013, Cozzi2009, Cozzi2010, Cozzi2013} is adaptable to an exterior domain because of their use of Littlewood-Paley theory and paradifferential calculus.

\appendix

%
% Section
%
\section{Approximating the initial data}\label{A:InitData}

\noindent In our proof of existence of weak solutions to the Euler equations in \refSAnd{ExistenceR2}{ExistenceExt} we employed a sequence of smooth compactly supported initial velocities that converged to a given initial velocity in the Serfati space, $S$, of \refD{SerfatiVelocity}. In this appendix, we detail the construction of that sequence.

In essence, our approach is very simple and entirely standard: apply a cutoff function to the mollified stream function for the velocity and let the support of the cutoff function increase to fill all of $\Omega$. For the full plane, this is, in fact, all that is required.

For the exterior of a single obstacle, however, there are two technical hurdles which require some work to overcome; namely, the low regularity of the space, $S$, and the presence of a boundary. In the absence of a boundary, one could simply employ convolution to smooth the stream function. Both these issues are dealt with in \refL{ApproxStream}, where we construct a sequence of smooth stream functions converging to the stream function for $u$. We then cut off this sequence in \refP{InitData} to construct our approximate sequence of initial velocities.

\begin{lemma}\label{L:ApproxStream}
Let $u \in S$. There exist $\psi \in C^1(\overline{\Omega})$ and $(\psi_n)_{n=1}^{\infty} \in C_c^{\infty}(\overline{\Omega})$  such that the following hold:
\begin{enumerate}
	\item
		$u = \nabla^{\perp}\psi$;

	\item
		$\norm{\nabla^\perp \psi_n}_{S}$ is uniformly bounded with respect to $n$;

	\item
		there exists $C>0$ such that $|\psi_n(x)|\leq C|x|$ for all $n\in\N$;

	\item
		$\psi_n = 0$ on $\partial\Omega$;

	\item
		for any $p$ in $[1, \iny)$, $\Delta \psi_n \to \Delta \psi$ in $L^p_{loc}(\Omega)$ as $n \to \iny$;
		
	\item
		$\grad \psi_n \to \grad \psi$ in $L^\iny(\Omega)$.
\end{enumerate}
\end{lemma}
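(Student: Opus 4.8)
The plan is to produce $\psi$ by integrating $u$ along paths, to smooth it in a boundary-compatible way, and finally to cut it off to compact support, checking properties (1)--(6) one at a time. First, since $u \in S$ is divergence-free and tangent to $\Gamma$, the $1$-form $u^2\,dx_1 - u^1\,dx_2$ is closed on $\Omega$, and its period around $\Gamma$ vanishes because the flux of $u$ across any loop $\gamma$ encircling the obstacle equals $\int_{D} \dv u + \int_\Gamma u\cdot\n = 0$, where $D$ is the region between $\gamma$ and $\Gamma$. Hence the form is exact and gives $\psi$ with $\grad^\perp\psi = u$, unique up to an additive constant. Integrating along polygonal paths from a fixed base point yields $\abs{\psi(x)} \le C + C\norm{u}_{L^\iny}\abs{x}$, and since $\Omega$ omits a fixed ball about the origin this reads $\abs{\psi(x)} \le C\abs{x}$. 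The velocity $u$ is continuous on $\overline{\Omega}$ --- indeed log-Lipschitz, by \refL{Morrey} --- so $\psi \in C^1(\overline{\Omega})$; moreover $d\psi$ restricted to $\Gamma$ equals $(u\cdot\n)\,d\sigma = 0$, so $\psi$ is constant on $\Gamma$, and I normalize that constant to $0$. Finally $\Delta\psi = \curl\grad^\perp\psi = \omega(u) \in L^\iny(\Omega)$, which is property (1).

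Next I would smooth $\psi$. The only obstruction to ordinary mollification is the boundary, and the only obstruction to a boundary-flattening argument is the low regularity of $S$; the proposal is to reconcile the two as follows. Fix a tubular neighborhood of $\Gamma$ and a diffeomorphism flattening it onto $\Gamma\times[0,\delta)$, in which $\psi$ becomes a $C^1$ function vanishing on $\{t = 0\}$; extend it across $t = 0$ by odd reflection in $t$, which remains $C^1$ (the normal derivative extends evenly, the tangential ones oddly and they vanish at $t = 0$) with $L^\iny$ distributional Laplacian. Mollifying in the tangential variable over the compact curve $\Gamma$ and in $t$ with an even kernel of scale $\eps$ gives a smooth function still odd in $t$, hence vanishing on $\Gamma$; call its pull-back $\psi_\eps$ near $\Gamma$. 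Away from $\Gamma$ use $\psi\ast\rho_\eps$, and glue the two with a fixed partition of unity whose boundary piece is $\equiv 1$ near $\Gamma$, obtaining $\psi_\eps \in C^\iny(\overline{\Omega})$ with $\psi_\eps = 0$ on $\Gamma$, $\abs{\psi_\eps(x)} \le C(1+\abs{x})$ uniformly for $\eps \le 1$, $\norm{\grad^\perp\psi_\eps}_{L^\iny} + \norm{\Delta\psi_\eps}_{L^\iny}$ uniformly bounded, $\Delta\psi_\eps \to \Delta\psi$ in $L^p_{loc}$, and $\grad\psi_\eps \to \grad\psi$ locally uniformly; these are standard once the $C^1$ and $L^\iny$-Laplacian bounds of the first step and the log-Lipschitz modulus of $u$ are in hand.

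Finally I would cut off. Let $\chi_n$ be smooth with $\chi_n \equiv 1$ on $B_{R_n}$, $\supp\chi_n \subset B_{2R_n}$, $\abs{\grad\chi_n} \le CR_n^{-1}$, $\abs{\grad\grad\chi_n} \le CR_n^{-2}$; take $R_n \to \iny$ and $\eps_n \to 0$ and set $\psi_n := \chi_n\psi_{\eps_n}$. Then $\psi_n \in C_c^\iny(\overline{\Omega})$ and vanishes on $\Gamma$, giving properties (1) and (4); property (3) is immediate from $\abs{\psi_n} \le \abs{\psi_{\eps_n}} \le C\abs{x}$. For property (2), the Leibniz rule gives $\grad^\perp\psi_n = \chi_n\grad^\perp\psi_{\eps_n} + (\grad^\perp\chi_n)\psi_{\eps_n}$ and $\Delta\psi_n = \chi_n\Delta\psi_{\eps_n} + 2\grad\chi_n\cdot\grad\psi_{\eps_n} + \psi_{\eps_n}\Delta\chi_n$; besides the first summands, controlled by the previous step, each remaining term is supported in $\{R_n \le \abs{x} \le 2R_n\}$, where $\abs{\psi_{\eps_n}} \le CR_n$ meets $\abs{\grad\chi_n} \le CR_n^{-1}$ and $\abs{\grad\grad\chi_n} \le CR_n^{-2}$, so every term is $O(1)$ in $L^\iny$ and $\norm{\grad^\perp\psi_n}_S \le C$ uniformly in $n$. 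Properties (5) and (6) are local: on any compact set $L$ one has $\chi_n \equiv 1$ on $L$ once $R_n$ is large, so $\psi_n = \psi_{\eps_n}$ there and the convergences of the smoothing step apply verbatim.

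I expect the genuine obstacle to be the smoothing step: producing a smooth approximant that vanishes \emph{exactly} on $\Gamma$ while keeping a uniformly bounded distributional Laplacian and $C^1$-convergence, given that $\psi$ is only $C^1$ --- the odd-reflection-in-flattened-coordinates device is precisely what copes with the two coupled difficulties (low regularity of $S$, presence of the boundary) flagged in the introduction. A secondary point requiring care is the scale balance in the cutoff step, where the linear growth $\abs{\psi(x)} \lesssim \abs{x}$ must be exactly offset by $\abs{\grad\chi_n} \lesssim R_n^{-1}$ for the uniform $S$-bound to survive.
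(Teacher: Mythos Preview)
Your overall architecture matches the paper's --- construct $\psi$ by integration, smooth near $\Gamma$ via odd reflection and mollification in boundary-adapted coordinates, glue with a partition of unity, cut off --- but there is a real gap in the smoothing step, and it is precisely the low-regularity issue you flagged. You flatten $\Gamma$ by a generic tubular-neighborhood diffeomorphism and then assert that the odd extension of $\phi = \psi\circ\Phi^{-1}$ has ``$L^\iny$ distributional Laplacian.'' In the flat $(s,t)$ coordinates this is not true: by the chain rule $\partial_s^2\phi+\partial_t^2\phi$ involves the \emph{full} Hessian $D^2_x\psi$, and for $u\in S$ one controls only $\Delta_x\psi=\omega\in L^\iny$, not $\grad u\in L^\iny$ (the classical $L^\iny$ failure of Calder\'on--Zygmund). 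Hence the uniform bound on $\Delta_x\psi_\eps$ after pulling back does not follow from what you wrote; ``these are standard'' hides exactly the obstruction.

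The paper's remedy is to flatten with a \emph{conformal} map: compose the biholomorphism $T\colon\Omega\to\ol{B}^C$ with a M\"obius transformation taking the unit circle to the real line, so that near each boundary point one has a conformal $\Phi$ onto the upper half-plane. Then the Laplacian transforms multiplicatively, $\Delta_x(\phi_\eps\circ\Phi)=\abs{\Phi'}^2(\eta_\eps*\Delta_y\phi)\circ\Phi$ and $\Delta_y\phi=\abs{(\Phi^{-1})'}^2(\Delta_x\psi)\circ\Phi^{-1}$, so only $\Delta_x\psi$ enters and the uniform $L^\iny$ bound is immediate. Your non-conformal route could in principle be salvaged by a Friedrichs-type commutator estimate (write $\Delta_x\psi_\eps$ in terms of $\tilde a^{ij}\,\eta_\eps*\partial_i\partial_j\phi$, compare with $\eta_\eps*(\tilde a^{ij}\partial_i\partial_j\phi)$, integrate by parts once), but that is genuine work you have not supplied. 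A smaller point: after your cutoff, $\grad\psi_n-\grad\psi$ contains $\psi_{\eps_n}\grad^\perp\chi_n$, which is $O(1)$ rather than $o(1)$ on the annulus, so (6) holds only locally; the paper avoids this by deferring the cutoff to \refP{InitData} (so its own $\psi_n$ are not in fact compactly supported, despite the statement).
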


\begin{proof}
	It follows from \refL{Morrey}
	that $u$ is uniformly continuous (in fact, log-Lipschitz). Because of this,
	the stream function, $\psi$, for $u$,
	which satisfies $u = \grad^\perp \psi$, $\omega = - \Delta \psi$, $\psi = 0$ on $\Gamma$,
	lies in $C^1(\Omega)$.
	% because $u$ is divergence-free and lies in $L^2_{loc}(\Omega)$.
	We can explicitly construct this stream function, as follows. Let $x \in \Omega$. Let $T \colon \Omega
	\to \R^2 \setminus B$ be the conformal map defined in
	\refS{BSKernelExteriorSingleObstacle}. (Here, $B = B_{1}(0)$ is the unit disk centered at the
	origin.)
	Let $\gamma_x$ be the curve whose image, under $T$, is the ray joining $T(x)/|T(x)|$ and $T(x)$.
	Then set
\begin{equation} \label{psi}
	\psi
		=\psi(x)
		= - \int_{\gamma_x} u^{\perp}\circ T^{-1} DT^{-1}
			\cdot d\mathbf{s}.
\end{equation}
Since $\dv{u}=0$ it follows that $u = \nabla^{\perp}\psi$. Also, as $u$ is bounded, and we also know that $DT^{-1}$ is bounded and $T$ grows  at most linearly, we find that $\psi(x)$ grows at most linearly.

We now begin the construction of the approximate stream functions, $\psi_n$. First fix $\bar{x} \in \Gamma$.
Let $U$ be a M\"{o}bius transformation that takes the unit circle to the real axis, with the unit disk mapping to the lower half-plane, and with $U(\bar{x}) = (0, 0)$. Let $\Phi = U \circ T$. Observe that $\Phi$ is a bi-holomorphism of $\Omega$ to the upper half-plane that extends smoothly up to the boundary.

Now let $r=r_{\bar{x}} > 0$ be small enough that $\Phi(B_{r}(\bar{x})\cap \Gamma)$ is an interval around $(y_1,y_2)=(0,0)$ on the $y_1$-axis and $\Phi(B_{r}(\bar{x})\cap \Omega)$ is an open set contained in $y_2>0$. Let $s>0$ be such that
$B_{s}(0) \subset \Phi(B_{r}(\bar{x}))$. We introduce $\phi=\phi(y)$, $y \in B_{s}(0)$, as
\begin{equation}\label{oddext}
	\phi(y)
		=
		\left\{
		\begin{array}{ll}
			\psi(\Phi^{-1}(y_1,y_2)), & \mbox{ } y_2 \geq 0,\\
			- \psi(\Phi^{-1}(y_1,-y_2)), & \mbox{ otherwise}.
		\end{array}
		\right.
\end{equation}
% \JimNote{14 Sept 2012}{H \& M had written, ``Let $\eta \in C_c^{\infty}(\R)$, $\eta(\theta)=\eta(-\theta)$.'' I replaced this with what I think they meant. Was I right? I made a similar change of $\eta_{\eps} = \eta_{\eps}(\theta) = \eps^{-2}\eta(\eps^{-1}\theta)$ to something else. Also, several $\varphi$'s had to be changed to $\phi$'s, as they are supposed to be the same thing. Also, I added that $\eta$ be supported on the unit ball. These changes were suggested by Elaine Cozzi.}
Let $\eta \in C_c^{\infty}(\R^2)$ taking values in $[0, 1]$ have total mass $1$ and be supported in the unit ball. Assume that $\eta(y_1, -y_2) = \eta(y_1, y_2)$ and set, for each $\eps > 0$, $\eta_{\eps}(y) = \eps^{-2}\eta(\eps^{-1} y)$. Define $\phi_{\eps} = \eta_{\eps} \ast \phi$ on $B_{s/2}(0)$, for $\eps < s/2$; because $\eta_\eps$ is supported on $B_\eps(0)$, this convolution is well-defined. Finally, let
\begin{align*}
	\psi_{\eps}^{\bar{x}}
		= \psi_{\eps}^{\bar{x}} (x) = \phi_{\eps}(\Phi(x))
\end{align*}
for $x \in \Phi^{-1}(B_{s/2}(0))$.

Because $\Phi$ is a bi-holomorhpishm,
\begin{align*}
	\Delta \psi_{\eps}^{\bar{x}}
		&= \Delta \pr{(\eta_\eps * \phi)(\Phi(x))}
		= \abs{\Phi'(x)}^2 (\Delta (\eta_\eps * \phi))(\Phi(x)) \\
		&= \abs{\Phi'(x)}^2 (\eta_\eps * \Delta \phi)(\Phi(x)) \\
		&= \abs{\Phi'(x)}^2 (\eta_\eps * (\abs{(\Phi^{-1})'}^2 \Delta \psi)(\Phi(x)).
\end{align*}
Thus,
\begin{align*} % \label{e:DeltapsiLInfBound}
	\norm{\Delta \psi_{\eps}^{\bar{x}}}_{L^\iny}
		\le C \norm{\Delta \psi}_{L^\iny}
		\le C.
\end{align*}

This construction defines a smooth approximation of $\psi$ locally, near the boundary. Moreover, the approximation vanishes on the portion of the boundary where it is defined, due to the fact that we performed an odd extension followed by even mollification. The next step is to use the compactness of the boundary $\Gamma$ to introduce approximations everywhere on the boundary. Denote $U^{\bar{x}} \equiv \Phi^{-1}(B_{s/2}(0))$, and
recall that $\Phi$ is also $\bar{x}$-dependent. Since $\bar{x} \in U^{\bar{x}}$ it follows that
\[\Gamma \subset \cup_{\bar{x} \in \Gamma} U^{\bar{x}}\]
is a cover of $\Gamma$ by open sets. By compactness of $\Gamma$ we can find a finite subcover $U^1, U^2, \ldots, U^N$, $\eps_0 > 0$, and corresponding approximations $\psi^1_{\eps}, \psi^2_{\eps}, \ldots, \psi^N_{\eps}$, $\eps < \eps_0$, such that each approximation $\psi^i_{\eps}$ vanishes on $U^i \cap \Gamma$ and is smooth in $U^i$.

\Ignore{ % Ignore
Now,
$
	D^\al \psi^i_\eps
		&= \eta_\eps * D^\al \phi
$
for any multi-index, $\al$.
It then follows
} % End Ignore
%, using \refE{DeltapsiLInfBound} as well,
It then follows easily that
\begin{align}\label{e:VariouspsiBounds}
	&\norm{\nabla\psi^i_{\eps}}_{L^{\infty}}
		\le C, \quad
	\norm{\Delta \psi_{\eps}^i}_{L^\iny}
		\le C % , \quad
	% \norm{\psi^i_{\eps}}_{W^{2, p}}
	% 	\le C
\end{align}
and that
\begin{align*}
	\Delta \psi^i_\eps \to \Delta \psi \text{ in } L^p(U^i)
		\text{ and }
		\grad \psi^i_\eps \to \grad \psi
		\text{ in } L^\iny(U^i).
\end{align*}

Let $U^0$ be such that
\begin{align*}
	\Omega \subset \cup_{i=0}^N U^i, \quad
	U^0 \cap \Gamma = \emptyset.
\end{align*}
Consider a partition of unity $\rho^i$, $i=0,1,\ldots,N$, so that
\begin{align*}
	\sum_{i=0}^N \rho^i = 1; \quad
	0 \leq \rho^i \leq 1; \quad
	\mbox{supp }\rho^i \subset U^i; \quad
	\rho^i \in C^{\infty}(U^i).
\end{align*}

We introduce, finally,
\begin{align*}
	\psi_{\eps}
		= \psi_{\eps}(x) \equiv \sum_{i=1}^N \rho^i \psi_{\eps}^i(x) + \rho^0(x)(\eta_{\eps}\ast\psi)(x),
\end{align*}
noting that the last convolution is defined for small $\eps$ since $\rho^0$ is supported in $U^0$.

Consider $\eps = 1/n$, $n \in \N$. By construction all the desired properties hold for the corresponding $\psi_n$. In particular, we note that
\begin{align*}
	\Delta \psi_n
		&= \sum_{i=1}^N \Delta
			\rho^i \psi_n^i
			+ \rho^i \Delta \psi_n^i
			+ \grad \rho^i \cdot \grad \psi_n^i \\
		& + \Delta \rho^0 \eta_n * \psi
			+ \rho^0 \eta_n * \Delta \psi
			+ \grad \rho^0 \cdot \eta_n * \grad \psi,
\end{align*}
in light of \refE{VariouspsiBounds}, shows that $\grad^\perp \psi_n$ is bounded in $S$, since $\curl \grad^\perp = -\Delta$.
\end{proof}

In \refP{InitData}, we cut off the sequence of smooth stream functions constructed in \refL{ApproxStream} to construct our approximate sequence of initial velocities.

\begin{prop}\label{P:InitData}
	Let $u$ lie in $S$ with $\omega = \curl u$. There exists a sequence, $(u_n)_{n = 1}^\iny$,
	of approximations to $u$ with the properties that:
	\begin{enumerate}
		\item
			$u_n = \KK[\omega_n]$ and lies in $C_c^\iny(\Omega)$, where
			$\omega_n = \curl u_n$ lies in $C_c^\iny(\Omega)$;
		\item
			$u_n \to u$ uniformly on any compact subset, $L$, of $\Omega$;
				% and $u_n \to u$ in $\Cal{D}'(\Omega)$;
		\item
			for any $p$ in $[1, \iny)$,
			$\omega_n \to \omega$ in $L^p(L)$ for any compact subset, $L$, of $\Omega$
			at a rate that depends only on $p$ and $L$;
			% and $\omega_n \to \omega$ in $\Cal{D}'(\Omega)$;
		\item
			$u_n$ is bounded in $S$ uniformly in $n$.
	\end{enumerate}
	\end{prop}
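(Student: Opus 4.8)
The plan is to produce $(u_n)$ directly by taking perpendicular gradients of the smooth stream functions $\psi_n$ supplied by \refL{ApproxStream}. Concretely, I would set
$$
u_n := \grad^\perp \psi_n, \qquad \omega_n := \curl u_n = -\Delta \psi_n .
$$
Since $\psi_n \in C_c^\iny(\ol\Omega)$ and $\psi_n = 0$ on $\Gamma$, each $u_n$ is smooth up to the boundary, compactly supported, divergence-free and tangent to $\Gamma$, while $\omega_n$ is likewise smooth with compact support; thus the pair $(u_n,\omega_n)$ solves the elliptic system \eqref{ellipsys}. The remaining content is then to (i) identify $u_n$ with $\KK[\omega_n]$ and (ii) translate the conclusions of \refL{ApproxStream} into properties (2)--(4).

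For (i): $u_n$ and $\KK[\omega_n]$ are both divergence-free, tangent to $\Gamma$, vanishing at infinity (trivially so for $u_n$, which is compactly supported), with scalar curl $\omega_n$; hence their difference is a divergence-free, curl-free field on $\Omega$, tangent to $\Gamma$, decaying at infinity. In the full plane this forces the difference to be zero. In an exterior domain the space of such fields is one-dimensional (spanned by $\KKol$), so I would finish by checking that $u_n$ and $\KK[\omega_n]$ have the same circulation about $\Gamma$. As recalled in the introduction, $\KK[\omega_n]$ has circulation $-\int_\Omega \omega_n$. For $u_n$, applying Stokes' theorem on $\Omega \cap B_R$ with $R$ so large that $\supp u_n \subset B_R$, the integral of $\curl u_n = \omega_n$ over $\Omega\cap B_R$ equals the circulation of $u_n$ about $\partial B_R$ (which is $0$) minus the circulation of $u_n$ about $\Gamma$; hence the latter is $-\int_\Omega\omega_n$ as well. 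The difference field then has zero circulation and must vanish, giving $u_n = \KK[\omega_n]$.

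For (ii), each property is a direct consequence of a conclusion of \refL{ApproxStream}. Property~(2) follows from $\grad\psi_n \to \grad\psi$ in $L^\iny(\Omega)$, which gives $u_n = \grad^\perp\psi_n \to \grad^\perp\psi = u$ uniformly on $\Omega$ and a fortiori on any compact $L$. Property~(3) follows from $\Delta\psi_n \to \Delta\psi$ in $L^p_{loc}(\Omega)$ for each $p\in[1,\iny)$, since $\omega_n = -\Delta\psi_n$ and $\omega = -\Delta\psi$; as the sequence $(\psi_n)$ is fixed once $u$ is fixed, the rate of convergence depends only on $p$ and $L$. Property~(4) is immediate from the uniform-in-$n$ bound on $\|\grad^\perp\psi_n\|_S$ from \refL{ApproxStream}.

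I expect essentially no serious obstacle here, since the difficult analysis---smoothing the merely log-Lipschitz field $u$ while preserving the boundary condition and the at most linear growth---has already been carried out in \refL{ApproxStream}. The only genuinely new step is the identification $u_n=\KK[\omega_n]$, and within it the sole point requiring care is the circulation bookkeeping together with the one-dimensionality of the harmonic-field space in the exterior case.
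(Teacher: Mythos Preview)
Your overall strategy---set $u_n=\grad^\perp\psi_n$ with $\psi_n$ from \refL{ApproxStream}---is the same as the paper's, and your circulation bookkeeping for the identification $u_n=\KK[\omega_n]$ is in fact more explicit than what the paper writes. However, there is a genuine gap: you rely on the stated conclusion $\psi_n\in C_c^\iny(\ol\Omega)$ of \refL{ApproxStream}, but if you inspect that lemma's proof, the $\psi_n$ constructed there are \emph{not} compactly supported. Away from the boundary the partition of unity reduces to $\rho^0\equiv 1$, so $\psi_n=\eta_{1/n}*\psi$ there, and $\psi$ grows linearly. The ``$C_c^\iny$'' in the lemma statement is evidently a misprint for $C^\iny$; note that the linear-growth bound in property~(3) of that lemma would be pointless if $\psi_n$ already had compact support.

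The paper's proof repairs this by applying an additional spatial cutoff: it sets $\ol\psi_n=\phi_n\psi_n$ with $\phi_n$ equal to $1$ on a ball of radius $\sim n$ and supported in a ball of radius $\sim 2n$, and then defines $u_n=\grad^\perp\ol\psi_n$, $\omega_n=\Delta\ol\psi_n$. The linear-growth bound $|\psi_n(x)|\le C|x|$ is exactly what makes the cutoff errors harmless: $\psi_n\grad^\perp\phi_n$ is bounded by $Cn\cdot n^{-1}=C$, and $\psi_n\Delta\phi_n$ by $Cn\cdot n^{-2}$, so the $S$-norm of $u_n$ stays uniformly bounded and properties~(2)--(4) follow as you indicate. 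Once you insert this cutoff step, the rest of your argument---including your Stokes-theorem verification that the circulation of $u_n$ equals $-\int_\Omega\omega_n$---goes through unchanged.
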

\begin{proof}
	Let $\psi$ and $(\psi_n)$ be as given by \refL{ApproxStream}.

	\Ignore{ % Ignore
	Let $\psi$ be a stream function associated with $u$ and let $\psi_n$ be a mollified
	form of $\psi$ with the property that $\psi_n = 0$ on $\prt \Omega$,
	$\psi_n$ lies in $C^\iny(\Omega)$, $\grad \psi_n$ and $\Delta \psi_n$ are
	uniformly bounded in $L^\iny(\Omega)$, and we have
	that
	$
		\psi_n \to \psi \text{ in } W^{2, p}_{loc}(\Omega)
	$
	and $C^1_{loc}(\Omega)$. \textbf{Milton will supply the tool to do this.}
	Also, $\abs{\psi(x)} \le C \abs{x}$ and $\abs{\psi_n(x)} \le C \abs{x}$  for
	some $C > 0$.
	} % End Ignore

	\newcommand{\Bdot}{\dot{B}}

	Suppose that $\ol{\Omega^C} \subseteq B_{a}(0)$, $a > 0$, and let
	$h$ be a cutoff function equal to 1 on $B_{a}(0)$ and equal to zero outside of
	$B_{2a}(0)$.
	Define $\phi_n \colon \Omega \to [0, 1]$ for $n > 1$ by
	\begin{align*}
		\phi_n(x)
			= h(a x/ n).
	\end{align*}
	Then, defining $\Bdot_n := B_n(0) \cap \Omega$, $\phi_n$ is supported on
	$\Bdot_{2n}$ and is equal to $1$ on $\Bdot_n$.
	
	Let
	\begin{align*}
		\ol{\psi}_n = \phi_n \psi_n,
			\quad u_n = \grad^\perp \ol{\psi}_n,
			\quad \omega_n = \curl u_n = \Delta \ol{\psi}_n
	\end{align*}
	and note that $\omega_n, u_n \in C_c^\iny(\Omega)$ with
	$u_n = \KK[\omega_n]$, giving (1).
	
	Let $L$ be a compact subset of $\Omega$. Then
	\begin{align*}
		\norm{u_n - u}_{L^\iny(L)}
			&= \smallnorm{\phi_n \grad^\perp \psi_n
				+ \psi_n \grad^\perp \phi_n
				- \grad^\perp \psi}_{L^\iny(L)}.
	\end{align*}
	For all sufficiently large $n$, $\phi_n = 1$ on $L$ so
	\begin{align*}
		\norm{u_n - u}_{L^\iny(L)}
			&\le \smallnorm{\grad \psi_n - \grad \psi}_{L^\iny(L)}
			\to 0
			%	+ \norm{\psi_n}_{L^\iny(L)} \norm{\grad \phi_n}_{L^\iny(\Omega)}.
	\end{align*}
	because of (6) of \refL{ApproxStream}. This gives (2).
		
	For (3), we calculate,
	\begin{align*}
		\norm{\omega_n - \omega}_{L^p(L)}
			= \norm{\psi_n \Delta \phi_n + \phi_n \Delta \psi_n
				+ 2 \grad \phi_n \cdot \grad \psi_n - \Delta \psi}_{L^p(L)}.
	\end{align*}
	For all sufficiently large $n$, $\phi_n = 1$ on $L$ so
	\begin{align*}
		\norm{\omega_n - \omega}_{L^p(L)}
			&\le \norm{\psi_n}_{L^p(L)} \norm{\Delta \phi_n}_{L^\iny(L)}
			+ 2 \norm{\grad \phi_n}_{L^\iny(L)} \norm{u_n}_{L^p(L)} \\
			&\qquad
			+ \norm{\Delta \psi_n - \Delta \psi}_{L^p(L)} \\
			&= \norm{\Delta \psi_n - \Delta \psi}_{L^p(L)} \to 0
			% &\le C n n^{-2} + C n^{-1} + \norm{\Delta \psi_n - \Delta \psi}_{L^p(L)},
	\end{align*}
	by (5) of \refL{ApproxStream}. This gives (3).
	
	For (4), we have
	\begin{align}\label{e:unLInfBound}
		\begin{split}
		&\norm{u_n}_{L^\iny(\Omega)} \\
			&\qquad
			\le \smallnorm{\phi_n \grad^\perp \psi_n
				+ \psi_n \grad^\perp \phi_n}_{L^\iny(\Omega)} \\
			&\qquad
			\le \smallnorm{\phi_n}_{L^\iny(\Omega)}
					\smallnorm{\grad^\perp \psi_n}_{L^\iny(\Omega)}
				+ \norm{\psi_n}_{L^\iny(\Bdot_{2n})}
					\smallnorm{\grad^\perp \phi_n}_{L^\iny(\Omega)} \\
			&\qquad
			\le C + C n n^{-1}
			= C.
		\end{split}
	\end{align}
	Here, we used
	$
		\norm{\grad \phi_n}_{L^\iny(\Omega)}
			\le C n^{-1}.
	$
	Also,
	\begin{align*}
		\norm{\omega_n}_{L^\iny(\Omega)}
			&= \norm{\psi_n \Delta \phi_n + \phi_n \Delta \psi_n
				+ 2 \grad \phi_n \cdot \grad \psi_n}_{L^\iny(\Omega)} \\
			 &\le
			 \norm{\psi_n}_{L^\iny(\Bdot_{2n})}
			 	\norm{\Delta \phi_n}_{L^\iny(\Omega)}
			 + \norm{\phi_n}_{L^\iny(\Omega)}
			 	\norm{\Delta \psi_n}_{L^\iny(\Omega)} \\
			 &\qquad
			+ 2 \norm{\grad \phi_n}_{L^\iny(\Omega)}
				\norm{u_n}_{L^\iny(\Omega)} \\
			&\le C n n^{-2} + C + C n^{-1}
			\le C.
	\end{align*}
	Together with \refE{unLInfBound}, this yields (4).
\end{proof}

Recall the definition of the log-Lipschitz space $LL(\Omega)$ in\refE{LL}.

\begin{lemma}\label{L:Morrey}
	Suppose $u \in S$. Then $u \in LL$ with $\norm{u}_{LL} \le C \norm{u}_S$.
	\Ignore{ % Ignore older, klutzier statement of the lemma
	Suppose $u$ lies in $S$ and fix $p_0$ in $(2, \iny)$.
	There exists $\delta > 0$ and a constant, $C_{p_0}$, such that
	\begin{align*} % \label{e:Morrey}
		\abs{u(x + y) - u(x)}
			\le - C_{p_0} \norm{u}_S \abs{y} \log \abs{y}
	\end{align*}
	for all for all $x, y$ in $\Omega$ with $\abs{y} < \delta$.
	} % End Ignore
\end{lemma}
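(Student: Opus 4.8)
The plan is to prove the pointwise bound
$\abs{u(x) - u(x')} \le C\norm{u}_S\, r\,(1 + \log^+(1/r))$ with $r = \abs{x-x'}$, which together with $\norm{u}_{L^\iny} \le \norm{u}_S$ yields the claim. For $r$ bounded below by a fixed $r_0>0$ the denominator $(1+\log^+ r)\,r$ is bounded below, so the quotient is controlled by $2\norm{u}_{L^\iny}/r_0$; hence only small $r$ matters. For small $r$ I would localize, distinguishing an \emph{interior} case, where $x$ is at distance $\ge\delta_0$ from the obstacle $\Omega^C$, and a \emph{boundary} case, where $x$ lies within $\delta_0$ of $\Omega^C$, for a fixed $\delta_0>0$ depending only on $\Omega$. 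Note the boundary hypothesis $u\cdot\n=0$ is genuinely needed: without it one can have $u$ bounded, divergence- and curl-free near a smooth boundary but discontinuous up to it.

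Interior case. Fix $\rho_0 := \min\{1,\delta_0/2\}$, so $B := B_{2\rho_0}(x)\subset\Omega$. Put $v = K*(\mathbf{1}_B\,\omega)$ with $K$ the full-plane kernel of \refE{KBSFullPlane}; then $\dv v = 0$, $\curl v = \omega$ on $B$, and $\norm{v}_{L^\iny} \le C\rho_0\norm{\omega}_{L^\iny}$ since $K\in L^1_{loc}$. Thus $u-v = \grad\phi$ on $B$ with $\phi$ harmonic and $\norm{\grad\phi}_{L^\iny(B)} \le \norm{u}_{L^\iny} + C\norm{\omega}_{L^\iny}$. Interior estimates for the harmonic function $\grad\phi$ give $\norm{\grad^2\phi}_{L^\iny(B_{\rho_0}(x))} \le C\rho_0^{-1}\norm{\grad\phi}_{L^\iny(B)} \le C\norm{u}_S$, so $\grad\phi$ is Lipschitz on $B_{\rho_0}(x)$ with constant $\le C\norm{u}_S$. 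For $v$ I would invoke (or reprove in two lines) the classical potential-theory estimate: for $\mathbf{1}_B\omega\in L^\iny$ of bounded-measure support, $\abs{v(x)-v(x')} \le C\norm{\omega}_{L^\iny}\, r\,(1+\log^+(1/r))$ — split the convolution at $\abs{x-y} = 2r$, using $\int_{B_R}\abs{K}\le CR$ near the diagonal and $\abs{\grad K}\le C\abs{x-y}^{-2}$ away from it. Adding the two contributions gives the desired bound for all $r<\rho_0$.

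Boundary case. Here I would flatten the boundary conformally, exactly as in the proof of \refL{ApproxStream}: for $\bar x\in\Gamma$, let $\Phi = U\circ T$ be the biholomorphism of a neighborhood of $\bar x$ in $\Omega$ onto a neighborhood of $0$ in the upper half-plane, $T$ the conformal map of \refS{BSKernelExteriorSingleObstacle} and $U$ a Möbius map, with $\Phi,\Phi^{-1}$ extending $C^\iny$ to the boundary and $D\Phi,D\Phi^{-1}$ bounded above and below on the relevant compact sets. Since $u\cdot\n=0$, the stream function $\psi$ ($u = \grad^\perp\psi$) is constant on $\Gamma$; normalize it to vanish there and set $\tilde\psi = \psi\circ\Phi^{-1}$. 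Conformality gives $u(z) = \overline{\Phi'(z)}\,\tilde u(\Phi(z))$ with $\tilde u = \grad^\perp\tilde\psi$, and $\curl\tilde u = -\Delta\tilde\psi = \abs{\Phi'\circ\Phi^{-1}}^{-2}\,\omega\circ\Phi^{-1}$; hence $\tilde u$ is bounded, divergence-free, of bounded curl on $\{y_2>0\}\cap B_s(0)$, with vanishing normal component on $\{y_2=0\}$. Odd-reflecting $\tilde\psi$ across $\{y_2=0\}$ produces $\bar\psi$ which is $C^1$ with $-\Delta\bar\psi$ equal to the odd reflection of $-\Delta\tilde\psi\in L^\iny$; then $\bar u := \grad^\perp\bar\psi$ is bounded, divergence-free and of bounded curl on the full ball $B_s(0)$ with $\norm{\bar u}_{L^\iny} + \norm{\curl\bar u}_{L^\iny} \le C\norm{u}_S$. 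Applying the interior case to $\bar u$, restricting to $y_2>0$, and pulling back by $\Phi$ (so $u = \overline{\Phi'}\,(\tilde u\circ\Phi)$ is a smooth bounded factor times a log-Lipschitz function composed with a Lipschitz map) shows $u$ is log-Lipschitz on a one-sided neighborhood of $\bar x$ in $\overline\Omega$ with modulus constant $\le C\norm{u}_S$. Covering $\Gamma$ by finitely many such neighborhoods, choosing $\delta_0$ so that $\delta_0$-near-$\Gamma$ points lie in this cover while $\delta_0$-far points fall under the interior case, and patching over pairs $x,x'$ (close pairs via whichever local estimate applies, distant pairs trivially) gives $\norm{u}_{LL(\Omega)} \le C\norm{u}_S$.

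The main obstacle is the boundary case: one must verify that the divergence-free/bounded-curl/$L^\iny$ structure survives the conformal change of variables — which it does, since a holomorphic change of variables multiplies $\Delta$ of a function by $\abs{\Phi'}^2$ and acts on $\grad^\perp\psi$ by multiplication by $\overline{\Phi'}$ — and the odd reflection, where no singular part is created on $\{y_2=0\}$ precisely because $\tilde\psi$ vanishes there and $\grad\tilde\psi$ is continuous; one must also check that pulling back by the $C^\iny$ bi-Lipschitz map $\Phi$ respects the log-Lipschitz modulus. All of these are routine given the regularity of $T$ recorded in \refS{BSKernelExteriorSingleObstacle}, and the interior case (including the potential estimate for $v$) is classical.
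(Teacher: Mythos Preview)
Your approach is correct but differs substantially from the paper's. The paper avoids any interior/boundary case split: it takes a stream function $\psi$ for $u$, extends it to all of $\R^2$ via Stein's extension operator (which simultaneously preserves all Sobolev norms), then localizes with a fixed cutoff $\phi_x$ supported on $B_2(x)$ and sets $\ol{u}=\grad^\perp(\phi_x\psi)$, $\ol{\omega}=\curl\ol{u}$. Since $\ol{\omega}$ is compactly supported with $\norm{\ol{\omega}}_{L^\iny}\le C\norm{u}_S$, Morrey's inequality combined with the Calder\'on--Zygmund estimate $\norm{\grad\ol{u}}_{L^p}\le Cp\norm{\ol{\omega}}_{L^p}$ and the optimization $p=-\log\abs{y}$ give the log-Lipschitz bound directly. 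This is a single short paragraph with no boundary analysis at all.

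Your route---splitting $u=v+\grad\phi$ with $\phi$ harmonic in the interior case, and conformal flattening plus odd reflection of the stream function in the boundary case---is more hands-on and self-contained: it does not invoke Stein's extension theorem, and it makes explicit the mechanism (odd reflection across a flat boundary) by which the Dirichlet condition $\psi=0$ on $\Gamma$ restores interior-type regularity. The price is length and the need to verify that the reflected $\bar\psi$ genuinely has $\Delta\bar\psi\in L^\iny$ without a singular layer on $\{y_2=0\}$; as you note, this follows from $\tilde\psi\in W^{1,\iny}$ with zero trace, so that the odd extension stays in $W^{1,\iny}$ and one more integration by parts in each half-plane produces matching normal-derivative boundary terms that cancel by symmetry. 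The paper's approach buys brevity and uniformity; yours buys transparency and avoids the black box of Stein's operator.
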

\begin{proof}
Let $\Cal{E}$ be the extension operator from $\Omega$ to $\R^2$ defined by Stein in Theorem 5' p. 181 of \cite{S1970}. This operator has the property that it continuously extends functions on all Sobolev spaces on $\Omega$ to the corresponding space on $\R^2$. Let $\psi$ be a stream function for $u$ and extend $\psi$ using $\Cal{E}$ to all of $\R^2$, also calling the extended stream function $\psi$. (If $\Omega = \R^2$ we need not perform this extension.)

Let $\phi$ be a smooth cutoff function supported in $B_{2}(0)$ with $\phi \equiv 1$ on $B_{1}(0)$ and let $\phi_x(\cdot) := \phi(\cdot - x)$. Let $\ol{u} = \grad^\perp(\phi_x \psi)$ and let $\ol{\omega} = \curl \ol{u}$.

Applying Morrey's inequality gives, for any $\abs{y} < 1$ and $p \ge p_0$,
\begin{align*}
	\abs{u(x + y) - u(y)}
		= \abs{\ol{u}(x + y) - \ol{u}(y)}
		\le C_{p_0} \norm{\grad \ol{u}}_{L^p(\R^2)}
			\abs{y}^{1 - \frac{2}{p}}.
\end{align*}
Because $\ol{\omega}$ is compactly supported, $\ol{u} = K * \ol{\omega}$. Thus, we can apply the Calderon-Zygmund inequality to obtain
\begin{align*}
	&\abs{u(x + y) - u(y)}
		\le C \inf_{p \ge p_0} \set{p \norm{\ol{\omega}}_{L^p(\R^2)} \abs{y}^{1 - \frac{2}{p}}} \\
		&\qquad
		= C \inf_{p \ge p_0} \set{p \norm{\ol{\omega}}_{L^p(B_2(x))} \abs{y}^{1 - \frac{2}{p}}}
		\le C \norm{\ol{\omega}}_{L^\iny(\R^2)} \inf_{p \ge p_0} \set{p \abs{y}^{1 - \frac{2}{p}}} \\
		&\qquad
		= - C \norm{\ol{\omega}}_{L^\iny(\R^2)} \abs{y} \log \abs{y}
\end{align*}
for all sufficiently small $y$.

But,
\begin{align*}
	\norm{\ol{\omega}}_{L^\iny(\R^2)}
		&= \smallnorm{\phi_x \omega - \grad^\perp \phi_x \cdot u}_{L^\iny(B_{2}(x))} \\
		&\le \smallnorm{\omega}_{L^\iny(B_{2}(x))}
			+ C \smallnorm{u}_{L^\iny(B_{2}(x))}
		\le C \norm{u}_S.
\end{align*}
\end{proof}

%
% Section - acknowledgements.
%
\section*{Acknowledgements}

We gratefully acknowledge support from the following funding agencies: DMA was supported
by the National Science Foundation through grants DMS-1008387 and DMS-1016267.
JPK was supported by the National Science Foundation through grants
DMS-1212141 and DMS-1009545. MCLF's research was supported by CNPq grants \# 200434/2011-0 and \# 303089/2010-5. HJNL was supported by CAPES grant  BEX 6649/10-6, CNPq grant \# 306331/2010-1, and FAPERJ grant E-26/103.197/2012.

We thank Elaine Cozzi and Yasushi Taniuchi for useful conversations. We also thank Alexis Vasseur for bringing the example in \refS{Examples} of a non-Serfati solution to our attention.

% \bibliography{Refs}

\def\cprime{$'$} \def\polhk#1{\setbox0=\hbox{#1}{\ooalign{\hidewidth
  \lower1.5ex\hbox{`}\hidewidth\crcr\unhbox0}}}

\bibliographystyle{plain}

\end{document}